\documentclass[11pt,a4paper]{amsart}
\usepackage[T1]{fontenc}

\usepackage[margin=3cm]{geometry}
\usepackage{amssymb,mathtools}
\usepackage{microtype,enumitem,xcolor,verbatim}

\usepackage[hidelinks]{hyperref}

\usepackage[looser]{newpxtext}
\usepackage[smallerops]{newpxmath} 

\usepackage{etoolbox}
\makeatletter
\patchcmd{\@settitle}
    {\uppercasenonmath\@title}   
    {}
    {}{\typeout{failed}}
\patchcmd{\@setauthors}
    {\MakeUppercase{\authors}}  
    {\authors}
    {}{\typeout{failed}}
\makeatother

\numberwithin{equation}{section}

\newtheorem{theorem}{Theorem}[section]
\newtheorem{proposition}[theorem]{Proposition}
\newtheorem{lemma}[theorem]{Lemma}
\newtheorem{corollary}[theorem]{Corollary}
\newtheorem{question}[theorem]{Question}
\newtheorem{maintheorem}{Theorem}

\theoremstyle{definition}

\theoremstyle{remark}
\newtheorem{remark}[theorem]{Remark}

\newcommand{\supp}{\operatorname{supp}}
\newcommand{\Occ}{\operatorname{Occ}}
\newcommand{\Int}{\operatorname{Int}}
\newcommand{\suppplus}{\supp_+}
\newcommand{\Fr}{\mathsf{Fr}} 
\newcommand{\cl}{\overline}
\newcommand{\one}{\mathbf{1}}

\title[Return Time Sets of Minimal Points]{Return Time Sets of Minimal Points for Countable Group Actions}

\author[J. Li]{Jian Li}
\address[J. Li]{Institute for  Mathematical Sciences and Artificial Intelligence \& Department of Mathematics,
	Shantou University, Shantou, 515821, Guangdong, China}
\email{lijian09@mail.ustc.edu.cn}
\urladdr{https://orcid.org/0000-0002-8724-3050}

\author[Y. Yang]{Yini Yang}
\address[Y. Yang]{Department of Mathematics,
	Shantou University, Shantou, 515821, Guangdong, China}
\email{ynyangchs@foxmail.com}
\urladdr{https://orcid.org/0000-0001-6564-2213}

\subjclass[2020]{Primary 37B05; Secondary 37B10, 37B20}
\keywords{Minimal group action, return-time set, syndetic set, symbolic dynamics, topological mixing, disjointness}

\date{\today}

\begin{document}

\begin{abstract}
For a countably infinite discrete group $G$, we study subsets of $G$ that contain a neighbourhood return-time set of a minimal point in a $G$-flow $X$.
For $e\in F\subseteq G$, we characterize this property by the existence of $e\in A\subseteq F$ such that $\operatorname{Int}_K(A):=\{t\in G:Kt\subseteq A\}$ is right syndetic for every finite $K\subseteq A$, 
and equivalently by the support of a minimal binary configuration in the Bernoulli shift $2^G$.
Additional conditions on $\operatorname{Int}_K(A) \operatorname{Int}_K(A)^{-1}$ characterize realization by minimal flows that are weakly mixing of all finite orders or strongly mixing: for every such $K$, 
the former requires that for each finite $H\subseteq G$ there is $c\in G$ with $HcH\subseteq \operatorname{Int}_K(A) \operatorname{Int}_K(A) ^{-1}$, 
whereas the latter requires that $\operatorname{Int}_K(A)\operatorname{Int}_K(A)^{-1}$ be cofinite.
We also construct, for every compact metrizable minimal $G$-flow, a free minimal, strongly mixing Cantor flow disjoint from it. 
\end{abstract}

\maketitle

\section{Introduction}\label{sec:introduction}
Throughout the paper, $G$ denotes a countably infinite discrete group with identity $e$.  
A $G$-flow is a compact Hausdorff space $X$ equipped with a left action of $G$ by homeomorphisms.  For $x\in X$ and open set $U\subseteq X$, we write
\[
N_G(x,U):=\{g\in G:gx\in U\} 
\]
for the corresponding return-time set.  
If $x$ is a minimal point and $U$ is a neighbourhood of $x$, then $N_G(x,U)$ is right syndetic, meaning that $G=CN_G(x,U)$ for some finite $C\subseteq G$.  
Thus minimality imposes a concrete combinatorial constraint on every neighbourhood return-time set.

This observation leads to the inverse problem studied in this paper.  
Given $F\subseteq G$ with $e\in F$, when does $F$ contain a set of the form $N_G(x,U)$ for a minimal point $x$ and a neighbourhood $U$ of $x$?  We say that $F$ is dynamically central syndetic if there exist a minimal $G$-flow $X$, 
a point $x\in X$, and an open neighbourhood $U$ of $x$ such that $N_G(x,U)\subseteq F$.  
The adjective central records the pointed condition $x\in U$, and hence $e\in N_G(x,U)$.  
The realization problem concerns containment, $N_G(x,U)\subseteq F$, rather than equality with the prescribed set.

The inverse problem is closely connected with topological disjointness.  
Furstenberg introduced the notion of disjointness in topological dynamics and proved that every totally transitive cascade with a dense set of periodic points is disjoint from every minimal cascade~\cite{F67}.
Huang and Ye subsequently showed that every transitive cascade disjoint from all minimal cascades is weakly mixing and has a dense set of minimal points~\cite{HY05}. 
Dong, Shao, and Ye further established that every thickly syndetic subset of $\mathbb{N}$ contains the neighborhood return-time set of a point in a weakly mixing minimal subshift~\cite{DSY12}. 
These results highlight the close connections among disjointness, recurrence, and weak mixing, as well as their role in the symbolic realization of prescribed return-time behavior.

A direct antecedent is the work of Glasscock and Le on dynamically syndetic sets~\cite{GL24}, where cascade return-time supersets were characterized by finite-pattern and filter conditions. 
In the present paper, finite-pattern recursion controls recurrence of all occurring binary patterns, while the filter language records the resulting return-time largeness.  
This raises the question of which parts of that description persist for arbitrary countably infinite groups.  
In this direction, Xu and Ye proved that every right thickly syndetic subset of a countable group contains a non-empty subset whose indicator function is a minimal point of the Bernoulli shift $2^G$~\cite[Theorem~B]{XY24}. 
Glasner, Tsankov, Weiss, and Zucker showed that, for every infinite discrete group, the Bernoulli shift is disjoint from every minimal flow~\cite[Theorem 1.1]{GTWZ21}. 
More recently, Li, Liang, and Yang characterized quasi-central subsets of countably infinite discrete groups via return times, 
proving that $F\subseteq G$ is quasi-central if and only if it contains a return-time set of some piecewise syndetic recurrent point~\cite[Theorem 1.2]{LLY26}.
Our contribution is threefold: we prove the finite-interior and minimal-symbolic characterizations for arbitrary countably infinite groups, extend the construction to minimal flows with weak mixing of all finite orders or topological strong mixing, 
and construct a free strongly mixing Cantor partner disjoint from every prescribed compact metrizable minimal flow.  
Throughout, we keep the right-translation conventions explicit so that the statements remain valid in the non-abelian setting.

For $A\subseteq G$ and a finite set $K\subseteq G$, define the right $K$-interior of $A$ by
\[
\Int_K(A):=\{t\in G:Kt\subseteq A\}.
\]
If $A=N_G(x,U)$ with $x\in U$ in a minimal $G$-flow and $K\subseteq A$ is finite, then
\begin{equation}\label{eq:intro-interior_return_identity}
\Int_K(A)=N_G\!\biggl(x,\bigcap_{k\in K}k^{-1}U\biggr),
\end{equation}
so $\Int_K(A)$ is right syndetic.  Finite right interiors are therefore the natural positive-pattern invariant of the realization problem.

The converse requires more than recurrence of positive patterns.  
Right syndeticity of $\Int_K(A)$ controls finite patterns consisting only of $1$'s, 
whereas minimality of a binary configuration requires syndetic recurrence of every occurring finite pattern, including patterns containing both $1$'s and $0$'s.  
For finite $E_1\subseteq A$ and $E_2\subseteq G\setminus A$, the occurrence set of the pattern that is $1$ on $E_1$ and $0$ on $E_2$ is
\[
\bigcap_{a\in E_1}a^{-1}A
 \ \cap\!
 \bigcap_{b\in E_2}b^{-1}(G\setminus A).
\]
The construction below makes every finite pattern of the new binary configuration recur syndetically while keeping its positive support inside the prescribed set.  
This is the step that connects the finite-interior condition with the symmetric right-syndetic criterion for minimal indicators in~\cite[Proposition~3.10]{XY24}. 
We use the Bernoulli shift $2^G=\{0,1\}^G$ with left action $(gx)(h)=x(hg)$; for $A\subseteq G$, write $\one_A$ for its characteristic point.

Our basic characterization is the following.

\begin{maintheorem}\label{thm:equivalent_characterizations}
Let $F\subseteq G$ with $e\in F$. The following assertions are equivalent:
\begin{enumerate}[label=\textup{(\arabic*)}]
\item $F$ is dynamically central syndetic.
\item There exists $e\in A\subseteq F$ such that
$\Int_K(A)=\{t\in G:Kt\subseteq A\}$
is right syndetic for every finite $K\subseteq A$.
\item There exists $e\in A\subseteq F$ such that $\one_A$ is a minimal point of the Bernoulli shift $2^G$.
\end{enumerate}
\end{maintheorem}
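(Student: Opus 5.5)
The implications $(3)\Rightarrow(1)$ and $(1)\Rightarrow(2)$ are short verifications. The real content is $(2)\Rightarrow(3)$.

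\emph{$(3)\Rightarrow(1)$.} Let $A$ be as in (3). Take $X$ to be the orbit closure of $\one_A$ in $2^G$; it is a minimal $G$-flow because $\one_A$ is a minimal point. Take $U:=\{x\in X: x(e)=1\}$, which is clopen and contains $\one_A$ since $e\in A$. With the convention $(gx)(h)=x(hg)$ we get $(g\one_A)(e)=\one_A(g)$. Hence $N_G(\one_A,U)=A\subseteq F$, which is (1).

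\emph{$(1)\Rightarrow(2)$.} Given $X,x,U$ with $N_G(x,U)\subseteq F$, put $A:=N_G(x,U)$; then $e\in A$. For a finite $K\subseteq A$, each $k\in K$ satisfies $kx\in U$, so $V:=\bigcap_{k\in K}k^{-1}U$ is an open neighbourhood of $x$. Identity \eqref{eq:intro-interior_return_identity} gives $\Int_K(A)=N_G(x,V)$. This set is right syndetic because $x$ is a minimal point.

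\emph{$(2)\Rightarrow(3)$.} Here I would build $B$ with $e\in B\subseteq A$ such that $\one_B$ is minimal, and verify minimality through the symmetric right-syndetic criterion \cite[Proposition~3.10]{XY24}. That criterion asks that for all finite $E_1\subseteq B$ and $E_2\subseteq G\setminus B$, the occurrence set $\bigcap_{a\in E_1}a^{-1}B\cap\bigcap_{b\in E_2}b^{-1}(G\setminus B)$ be right syndetic.

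The construction goes by induction along an exhaustion $\{e\}=D_0\subseteq D_1\subseteq\cdots$ of $G$ by finite sets. At stage $n$ we fix $B\cap D_n$, always keeping it inside $A$. Membership is forced to be $0$ off $A$, so the only freedom is which points of $A$ to keep.

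The key device is that the family $\{\Int_K(A)\}$ is closed under finite intersections, since $\Int_K(A)\cap\Int_{K'}(A)=\Int_{K\cup K'}(A)$. So whenever a pattern $P$ has been fixed on a finite window $W_n$ with its $1$'s at $K_n\subseteq A$, we can find translates $t$ in the right syndetic set $\Int_{K_n}(A)$ at which $P$ is allowed to be copied. Here $W_n$ is a suitably enlarged $D_n$, and copying $P$ at $t$ means placing it on $W_nt$. The $1$'s of the copy land in $K_nt\subseteq A$, and the $0$'s are always allowed.

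At stage $n$ I would therefore choose a finite set $C_n$ with $G=C_n\Int_{K_n}(A)$. I would then reserve a right syndetic family of pairwise disjoint translates $W_nt$, with $t\in\Int_{K_n}(A)$, far away from everything already decided, and copy $P$ onto all of them. Later stages may only add or remove points outside the windows already committed. This makes every pattern of $\one_B$ on each $W_n$, including patterns with both $0$'s and $1$'s, recur along a right syndetic set. Since every finite pattern of $\one_B$ sits inside some $W_n$, the criterion applies.

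The main obstacle is the bookkeeping in this construction. One must reserve infinitely many syndetic families of copies (one for each $n$) that do not collide with one another. One must also ensure that copies made at stage $n$ are themselves preserved, so that the stage-$m$ pattern, for $m>n$, which contains earlier copies, is reproduced consistently. Moreover, right syndeticity with a fixed gap set $C_n$ must survive the later thinning forced by disjointness.

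To handle this I would use a hierarchical, Toeplitz-like scheme. At each stage, choose the new translates as a maximal $W_nW_n^{-1}$-separated subset of $\Int_{K_n}(A)$ lying outside a large finite set. Maximality keeps the chosen set right syndetic, with gap set $C_nW_nW_n^{-1}$. Then define $B$ on $W_{n+1}$ so that it already contains these copies before copying $W_{n+1}$ itself.
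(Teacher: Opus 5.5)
\paragraph{Verdict.} Your implications $(3)\Rightarrow(1)$ and $(1)\Rightarrow(2)$ are correct and are the paper's arguments. There is a genuine gap in $(2)\Rightarrow(3)$, located at the step you call bookkeeping.

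\paragraph{Where the scheme breaks.} You commit, at stage $n$, exact copies of the stage-$n$ pattern $P_n$ on all windows $W_nt$, with $t$ in an \emph{infinite} right syndetic set $T_n\subseteq\Int_{K_n}(A)$. This cannot be continued. Once $W_nT_n$ is committed it is right syndetic. The windows must exhaust $G$, so they eventually become large compared with the gap set of $T_n$. From then on every translate $W_{n+1}t'$ meets committed stage-$n$ windows, and placing new copies ``far away from everything already decided'' is impossible.

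An exact copy of $P_{n+1}$ on $W_{n+1}t'$ then needs two things besides $t'\in\Int_{K_{n+1}}(A)$. The values already committed on $W_{n+1}t'$ must be the $t'$-translate of those on $W_{n+1}$. Equivalently, the local arrangement of $T_n$ (and of all earlier families) near $W_{n+1}t'$ must reproduce its arrangement near $W_{n+1}$. Nothing in your construction makes the set of such $t'$ non-empty, let alone right syndetic. Demanding it is a syndetic-recurrence statement about the configuration you have already committed, which is exactly the kind of statement you are trying to prove.

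\paragraph{Why your proposed fixes do not help.} Taking $T_n$ to be a maximal separated subset of $\Int_{K_n}(A)$ outside a finite set does not touch this problem. Defining $B$ on $W_{n+1}$ to contain the stage-$n$ copies only fixes the copies inside $W_{n+1}$, not those inside the translates $W_{n+1}t'$. A Toeplitz construction in $\mathbb Z$ escapes this because nested periods make the global stage-$n$ family line up inside every stage-$(n+1)$ copy. Here the admissible centres form the unstructured sets $\Int_{K_n}(A)$, so no such alignment is available. For the same reason, any thinning used to avoid conflicts destroys the fixed gap set you claim for $P_n$.

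\paragraph{The missing idea.} You need to commit only finite data at each stage while keeping the gap bounds independent of the stage. This is how the paper proceeds in Proposition~\ref{prop:binary_realization}.
\begin{itemize}
\item It builds finite patterns $p_r$ on finite domains $D_r$ exhausting $G$, with $p_{r+1}|_{D_r}=p_r$.
\item It maintains the global covering invariant $D_j\subseteq S_i^8\Occ_{p_j}(p_i)$ for all $i\le j$, where $S_i$ is fixed once and for all at stage $i$.
\item Lemma~\ref{lem:finite_covers} then gives minimality of the limit with no consistency problem, because nothing outside $D_{r+1}$ is ever decided at stage $r$.
\end{itemize}

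\paragraph{How $p_{r+1}$ is built.} The paper keeps $p_r$ in the centre. Then, for $i=r,\dots,1$, it packs a maximal disjoint family of copies of $p_i$ on domains $D_it\subseteq V_i$ with $t\in\Int_{K_i}(A)$, avoiding all blocks already placed. This keeps every copy exact and every support inside $A$. All remaining sites are set to $0$.

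\paragraph{Where the real work lies.} The uniformity of the bound $S_i^8$ needs two mechanisms.
\begin{itemize}
\item Near the edge of a higher block, it comes from the frame invariant $\Fr_{i,j}\subseteq S_i^6\Occ_{p_j}(p_i)$: the boundary layer of each block is already covered by occurrences of $p_i$ inside that block.
\item Near the edge of $D_{r+1}$, it comes from the buffer Lemma~\ref{lem:buffer}, which re-establishes the frame invariant at the new level.
\end{itemize}
Your proposal has the correct local observation: copies centred in $\Int_{K_n}(A)$ keep their $1$'s in $A$. But it contains no mechanism of this kind, so it does not establish right syndeticity of the occurrence sets.
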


The difficult direction is the passage from the finite-interior condition to a minimal binary configuration. Proposition~\ref{prop:binary_realization} isolates this step: its recursion converts positive placement information into permanent recurrence of mixed zero--one patterns. 
The other implications in Theorem~\ref{thm:equivalent_characterizations} follow from \eqref{eq:intro-interior_return_identity} for (1)$\Rightarrow$(2) and the identity cylinder for (3)$\Rightarrow$(1). 
Proposition~\ref{prop:more_equi_dyn_syn_sets} contains additional equivalent formulations using decreasing return-time sets, free Cantor actions, and syndetic idempotent filters; its free realization uses Proposition~\ref{prop:free_extension}.

The second theme concerns the combinatorial structure forced by mixing on neighbourhood return-time sets.
Recall that a $G$-flow $X$ is topologically weakly mixing of all finite orders if the diagonal action on $X^k$ is topologically transitive for every $k\geq 2$, and topologically strongly mixing if 
\[
N_G(U,V):=\{g\in G:gU\cap V\neq\varnothing\} 
\]
is cofinite for any non-empty open $U,V\subset X$. 
If $x\in X$ is a transitive point and $V\subset X$ is non-empty and open, then
\[
N_G(V,V)=N_G(x,V) N_G(x,V)^{-1}.
\]
Topologically weak mixing of all finite orders has a natural combinatorial signature.
A set $S\subseteq G$ is called bitranslation thick if for every finite $H\subseteq G$ there is $c\in G$ such that
\[
 HcH\subseteq S.
\]
For a topologically transitive $G$-flow, 
it is topologically weakly mixing of all finite orders if and only if $N_G(V,V)$ is bitranslation thick for every non-empty open set $V\subseteq X$; see Lemma~\ref{lem:allorders_self_hitting}.
Indeed, testing a finite family of rectangles produces a finite set of left and right translates, and the common translating element is exactly the centre $c$ in a set $HcH$.

We characterize the return-time sets of minimal $G$-flows that are weakly mixing of all finite orders as follows.
\begin{maintheorem}\label{thm:wm_allorders}
Let $F\subseteq G$ and $e\in F$. The following assertions are equivalent.
\begin{enumerate}
\item There exists a minimal $G$-flow $X$ that is topologically weakly mixing of all finite orders, a point $x\in X$, and an open neighbourhood $U$ of $x$ such that $N_G(x,U)\subseteq F$.
\item There exists $e\in A\subseteq F$ such that $\Int_K(A)$ is right syndetic and $\Int_K(A)\Int_K(A)^{-1}$ is bitranslation thick for every finite $K\subseteq A$.
\item There exists $e\in A\subseteq F$ such that $\one_A$ is a minimal point of $2^G$, and $\overline{G\one_A}$ is topologically weakly mixing of all finite orders.
\end{enumerate}
\end{maintheorem}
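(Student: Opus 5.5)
We prove the cycle (1)$\Rightarrow$(2)$\Rightarrow$(3)$\Rightarrow$(1). Two of the three steps are short consequences of identities already stated in the introduction; the real work is in (2)$\Rightarrow$(3).

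\emph{(1)$\Rightarrow$(2).} Put $A:=N_G(x,U)$, so $e\in A\subseteq F$. Fix a finite $K\subseteq A$ and set $W:=\bigcap_{k\in K}k^{-1}U$. Since $kx\in U$ for every $k\in K$, we have $x\in W$, so $W$ is a nonempty open set. By \eqref{eq:intro-interior_return_identity}, $\Int_K(A)=N_G(x,W)$, which is right syndetic by minimality. Because $X$ is minimal, $x$ is a transitive point. Hence
\[
\Int_K(A)\Int_K(A)^{-1}=N_G(x,W)N_G(x,W)^{-1}=N_G(W,W).
\]
By Lemma~\ref{lem:allorders_self_hitting}, $N_G(W,W)$ is bitranslation thick.

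\emph{(3)$\Rightarrow$(1).} Take $X=\overline{G\one_A}$, $x=\one_A$, and let $U$ be the identity cylinder $\{y:y(e)=1\}$. Then $N_G(x,U)=A\subseteq F$.

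\emph{(2)$\Rightarrow$(3).} I would rerun the recursion of Proposition~\ref{prop:binary_realization}, which produces $e\in A'\subseteq A$ with $\one_{A'}$ minimal, and interleave an extra family of requirements that forces weak mixing of all orders in $Y=\overline{G\one_{A'}}$.

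By Lemma~\ref{lem:allorders_self_hitting}, applied to the transitive point $\one_{A'}$, it suffices to show the following. For every finite pattern $P$ occurring in $\one_{A'}$, with cylinder $[P]$, the set $N_G([P],[P])$ is bitranslation thick.

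These requirements are countable: there are countably many pairs $(P,H)$ with $P$ a finite pattern and $H\subseteq G$ finite. So I would enumerate them and dovetail them with the minimality requirements of Proposition~\ref{prop:binary_realization}.

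At the stage handling $(P,H)$, the pattern $P$ is already placed. Since the construction makes occurrences of $P$ positive-placement-driven, I would arrange that the set of its occurrences contains $\Int_K(A)$-type translates for a suitable finite $K\subseteq A$ supporting $P$. Bitranslation thickness of $\Int_K(A)\Int_K(A)^{-1}$ then supplies $c$ with $HcH\subseteq\Int_K(A)\Int_K(A)^{-1}$. Each $h_1ch_2$ is then witnessed as $s t^{-1}$ with $s,t\in\Int_K(A)$. At those finitely many positions we copy $P$, filling only the $1$'s of $P$ inside $A$ (legal, since $Ks\subseteq A$) and leaving the $0$'s as zeros. We then freeze these finitely many copies in the same way the recursion freezes its mixed patterns, so that later stages never destroy them. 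This yields $HcH\subseteq N_G([P],[P])$.

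The main obstacle is compatibility. The new copies of $P$ must not break the syndetic recurrence of previously frozen mixed patterns, and they must not create patterns that fail to recur syndetically. I expect to handle this as Proposition~\ref{prop:binary_realization} does: choose the witnesses $s,t$ far out, beyond the finite region already fixed at that stage. This is possible because $c$ can be replaced by a translate, as thickness gives infinitely many centres. Then treat the newly created finite block as one more pattern to be made syndetically recurrent in later stages. If this interleaving is awkward, a fallback is to build the minimality construction inside the subsystem first and then verify that the occurrence sets of the patterns actually used in the construction are right syndetic sets whose difference sets contain $\Int_K(A)\Int_K(A)^{-1}$ up to a finite translation. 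Either route needs exactly one new ingredient beyond Proposition~\ref{prop:binary_realization}: placing prescribed copies at positions $s$ and $t$ simultaneously.
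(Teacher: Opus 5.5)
Your arguments for (1)$\Rightarrow$(2) and (3)$\Rightarrow$(1) are fine. For (1)$\Rightarrow$(2) you use Lemma~\ref{lem:self_hitting_difference} directly, which is slightly more direct than the paper's route through $\Gamma_A(K)=N_G(V_K,V_K)$. Your plan for (2)$\Rightarrow$(3) is the paper's plan in Proposition~\ref{prop:allorders_realization}: rerun the recursion and insert, for each $\gamma\in HcH$, two exact copies at $t$ and $\gamma t$.

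The gap is in the step you single out as the new ingredient. For a fixed $\gamma$, the legal centres are exactly the $t\in\Int_K(A)\cap\gamma^{-1}\Int_K(A)=\Int_{K\cup K\gamma}(A)$, with $s=\gamma t$. The hypothesis $\gamma\in\Int_K(A)\Int_K(A)^{-1}$ only says this set is non-empty. Replacing $c$ by another centre changes \emph{which} $\gamma$'s must be witnessed. It gives no control over where the witnesses of a given $\gamma$ lie, so ``choose $s,t$ far out'' is unjustified.

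Concretely, in $G=\mathbb Z$ let $B=2\mathbb Z\cup\{1\}$. Then $B$ is syndetic and $B-B=\mathbb Z$. But every witness pair $(s,t)$ of an odd $\gamma$ has $s=1$ or $t=1$, and every $H+c+H=\{c,c+1,c+2\}$ with $H=\{0,1\}$ contains an odd $\gamma$. So syndeticity of $\Int_K(A)$ together with bitranslation thickness of $\Int_K(A)\Int_K(A)^{-1}$ for a single $K$ cannot produce witnesses outside a prescribed finite region, let alone pairwise disjoint ones.

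What you need is the finite-interior hypothesis for \emph{other} finite sets, which is Lemma~\ref{lem:interior_vs_product}. If $\gamma=ba^{-1}$ with $a,b\in\Int_K(A)$, then $a\,\Int_{Ka\cup Kb}(A)\subseteq\Int_{K\cup K\gamma}(A)$. Hence the set of legal centres for $\gamma$ is right syndetic, and in particular infinite. In the example, $A=B$ fails the hypothesis at $K=\{0,1\}$.

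With infinitely many centres, you choose $t_{r,\gamma}$ successively outside the finite set $D_r^{-1}W\cup\gamma^{-1}D_r^{-1}W$. Here $W$ is the central domain together with the bridge domains already placed. Before that, you remove the finite set $D_r^{-1}D_r$ from the bridge set using Lemma~\ref{lem:bt_finite_deletion}, so that the two copies for the same $\gamma$ are disjoint.

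There are two further points.
\begin{itemize}
\item You should copy the full current pattern $p_r$, not an arbitrary sub-pattern $P$, and put the bridge blocks into $B_{r+1}$. They then inherit $D_r\subseteq S_i^8\Occ_{p_r}(p_i)$ and are treated as higher blocks in the covering claims. Thickness at every level still follows from $N_G(W_r,W_r)\subseteq N_G(W_k,W_k)$ for $r\ge k$, with $H_r$ exhausting $G$. Copying a lower-level pattern would need guard blocks, as in Proposition~\ref{prop:strong_mixing_realization}, to keep uniform covering constants at intermediate levels.
\item Your fallback cannot work. In the unmodified construction $\Occ_x(p)\subseteq\Int_{\suppplus(p)}(A)$, which bounds the difference sets of occurrence sets from above, not from below. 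Active insertion of bridge copies is unavoidable.
\end{itemize}
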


We characterize the return-time sets of topologically strongly mixing minimal $G$-flows as follows.

\begin{maintheorem}\label{thm:sm_sets}
Let $F\subseteq G$ and $e\in F$. 
The following three assertions are equivalent.
\begin{enumerate}
\item There exists a topologically strongly mixing minimal $G$-flow $X$, a point $x\in X$, and an open neighbourhood $U$ of $x$ such that $N_G(x,U)\subseteq F$.
\item There exists $e\in A\subseteq F$ such that $\Int_K(A)$ is right syndetic and $\Int_K(A)\Int_K(A)^{-1}$ is cofinite for every finite
$K\subseteq A$.
\item There exists $e\in A\subseteq F$ such that $\one_A$ is a minimal point of $2^G$ and $\overline{G\one_A}$ is topologically strongly mixing.
\end{enumerate}
\end{maintheorem}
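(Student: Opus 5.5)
The plan is to prove the cycle (1)$\Rightarrow$(2)$\Rightarrow$(3)$\Rightarrow$(1). The last implication is immediate: take $X=\overline{G\one_A}$, $x=\one_A$ and the identity cylinder $U=\{y:y(e)=1\}$. Then $N_G(\one_A,U)=A\subseteq F$, because $(g\one_A)(e)=\one_A(g)$.

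For (1)$\Rightarrow$(2), set $A=N_G(x,U)$. For finite $K\subseteq A$, identity \eqref{eq:intro-interior_return_identity} gives $\Int_K(A)=N_G(x,W)$ with $W=\bigcap_{k\in K}k^{-1}U$. This $W$ is an open neighbourhood of $x$, since $kx\in U$ for each $k\in K$. So $\Int_K(A)$ is right syndetic by minimality. Because $x$ is transitive, $\Int_K(A)\Int_K(A)^{-1}=N_G(x,W)N_G(x,W)^{-1}=N_G(W,W)$, and this is cofinite by strong mixing.

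The substantive step is (2)$\Rightarrow$(3). Here I would rerun the recursive construction of Proposition~\ref{prop:binary_realization}, which produces $e\in B\subseteq A$ with $\one_B$ minimal, while keeping track of mixing. The approach is to build $B$ so that its orbit closure $Y$ satisfies the following: for any two finite patterns $P,Q$ occurring in $\one_B$, the set $N_G([P],[Q])$ is cofinite. In a minimal subshift every occurring pattern occurs syndetically, so it suffices to handle pattern pairs.

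During the recursion, the construction places copies of already-built finite patterns at translates $t\in\Int_K(A)$, where $K$ is the current finite piece of $B$. Every such placement $t$ makes the current pattern $P_n$ (supported on $K$) occur at $t$. Cofiniteness of $\Int_K(A)\Int_K(A)^{-1}$ means that for all but finitely many $g$ we can write $g=st^{-1}$ with $s,t\in\Int_K(A)$. So at stage $n$ I would enumerate the pattern pairs and the group elements outside a finite exceptional set, and use the freedom in choosing placement translates to realize, for each $g$, both $t$ and $s=gt$ as placement sites. This requires choosing the syndetic placement sets to contain such pairs. Since later stages only refine the configuration outside the already fixed patterns, the occurrences at $t$ and $gt$ persist, giving $g\in N_G([P_n],[P_n])$.

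Passing from self-returns of cylinders $[P_n]$ to arbitrary pairs uses minimality. Any two nonempty open sets contain translates $h_1[P_n]$ and $h_2[P_n]$ for a large enough $n$. Then $N_G(h_1[P_n],h_2[P_n])=h_2N_G([P_n],[P_n])h_1^{-1}$, which is cofinite. So $Y$ is topologically strongly mixing.

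The main obstacle is compatibility. Infinitely many $g$ must be handled, while each stage is finite and must preserve the syndetic recurrence of mixed patterns that gives minimality. The remedy I would try is diagonalization. At stage $n$, guarantee the returns $g\in N_G([P_n],[P_n])$ for $g$ in a finite set $E_n$ exhausting $G$ minus the exceptional set. Then note that $[P_m]\supseteq[P_n]$ for $m\le n$, so returns for later, finer patterns imply returns for coarser ones. Each $g$ is then eventually covered for every earlier pattern, because the exceptional finite sets can be absorbed using $\Int_{K'}(A)\Int_{K'}(A)^{-1}$ for the larger support $K'$.
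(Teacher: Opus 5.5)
\textbf{The gap is in the diagonal argument for (2)$\Rightarrow$(3).} Your implications (3)$\Rightarrow$(1) and (1)$\Rightarrow$(2) are correct and agree with the paper. So is the reduction of strong mixing to cofiniteness of the self-return sets $N_G([P_n],[P_n])$.

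Your last sentence has the monotonicity backwards. If $K\subseteq K'$ then $\Int_{K'}(A)\subseteq\Int_K(A)$, so $\Int_{K'}(A)\Int_{K'}(A)^{-1}$ is \emph{smaller} and its exceptional set is \emph{larger}. Nothing is absorbed by passing to larger supports.

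There is a second shrinkage that cannot be avoided. Two exact copies of $P_n$ at $t$ and $gt$ have overlapping domains exactly when $g\in D_n^{-1}D_n$. Since $e\in D_n$ and the $D_n$ exhaust $G$, every fixed $g$ lies in $D_n^{-1}D_n$ for all large $n$. So if bridges are built only from the current top-level pattern, a given $g$ can serve as a return only during a finite window of stages. Whether $g$ is still usable at stage $n+1$ depends on $D_{n+1}$ and $K_{n+1}$, and these contain the very bridge blocks you place at stage $n$. Your scheme therefore produces no finite exceptional set for $N_G([P_m],[P_m])$ that is determined by level $m$, and cofiniteness is not established.

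Your top-level-only scheme is essentially what suffices for weak mixing of all finite orders (Proposition~\ref{prop:allorders_realization}). There, one fresh centre $c_r$ per stage, chosen off $D_r^{-1}D_r$, is enough. Cofiniteness instead needs every $\gamma$ at every fixed level.

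A smaller gap: a single decomposition $g=st^{-1}$ with $s,t\in\Int_K(A)$ gives no freedom to place the pair away from the existing domain. You need $\Int_K(A)\cap g^{-1}\Int_K(A)=\Int_{K\cup Kg}(A)$ to be right syndetic. This follows from the hypothesis applied to the finite set $Ks\cup Kt\subseteq A$, because $t\,\Int_{Ks\cup Kt}(A)\subseteq\Int_{K\cup Kg}(A)$ (Lemma~\ref{lem:interior_vs_product}).

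\textbf{What is missing: lower-level bridges and guard blocks.} The paper keeps the level-$k$ exceptional set fixed by enumerating all pairs $(k,\gamma)$. At some later stage $r\ge k$, for each $\gamma\in\Gamma_A(K_k)\setminus D_k^{-1}D_k$, it inserts two exact copies of the \emph{lower-level} pattern $p_k$ on $D_kt$ and $D_k\gamma t$. Here $t\in\Int_{K_k\cup K_k\gamma}(A)$ is chosen outside a finite set. This gives $N_G(W_k,W_k)\supseteq\Gamma_A(K_k)\setminus D_k^{-1}D_k$, a cofinite set depending only on $k$.

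The price is that a block carrying only $p_k$ breaks the minimality bookkeeping at the intermediate levels $k<i\le r$. In the covering argument (Claim~2), a level-$i$ candidate domain blocked by a higher block is handled through that block's frame invariant. A block carrying $p_k$ with $k<i$ contains no occurrences of $p_i$, so nothing guarantees an occurrence of $p_i$ within $S_i^8$ of the core. The paper repairs this with guard blocks: exact copies of $p_i$, for $k<i\le r$, placed at centres $u_i^{(\nu)}\in L_i\cap R_ib_\nu$ supplied by Lemma~\ref{lem:many_legal_centres}. By \eqref{eq:strong_mixing_guard_distance}, every point of a lower core or guard block then lies in $S_i^3u_i^{(\nu)}$.

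Your proposal contains neither the lower-level bridges nor a substitute for the guards. These are exactly what make strong mixing compatible with minimality in the construction.
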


For compact $G$-flows $X$ and $Y$, a joining is a non-empty closed invariant subset of $X\times Y$ projecting onto both factors; 
the flows are disjoint, written $X\perp Y$, when their only joining is $X\times Y$. 
Glasner, Tsankov, Weiss, and Zucker proved that every compact metrizable minimal $G$-flow admits a free minimal $G$-flow disjoint from it~\cite[Theorem 1.2]{GTWZ21}. 
To obtain mixing realizations by free actions on Cantor spaces, we prove the following strengthening, which is also of independent interest.

\begin{maintheorem}\label{thm:free_strong_mixing_disjoint}
Let $G$ be a countably infinite discrete group and let $X$ be a compact metrizable minimal $G$-flow. 
Then there exists a free topologically strongly mixing minimal $G$-flow on a Cantor space $Y$ such that $X\perp Y$.
\end{maintheorem}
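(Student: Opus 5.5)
We plan to build $Y$ as the orbit closure of a carefully constructed point of a symbolic shift, and then check disjointness from $X$ through a dense set of minimal points in the product. Fix a compatible metric on $X$, a point $x_0\in X$, and an increasing exhaustion $e\in F_1\subseteq F_2\subseteq\cdots$ of $G$ by finite sets. Disjointness from the minimal flow $X$ will come from the criterion already used in \cite{GTWZ21}: if $Y$ is minimal and, for every pair of non-empty open sets $V\subseteq X$ and $W\subseteq Y$, the return-time set $N_G(x_0,V)$ (which is right syndetic) meets $N_G(y,W)$ syndetically enough that every joining is forced to meet $V\times W$, then $X\perp Y$. Concretely, we will ensure that for each finite pattern occurring in $y$ and each neighbourhood $V$, the pattern occurs at positions lying in $N_G(x_0,V)$ in a right syndetic way. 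Then every joining, being invariant and closed and containing a point $(x,y')$, meets every rectangle.

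The construction of $y\in 2^G$ (or in $\{0,1,2\}^G$ for freeness) proceeds inductively over blocks indexed by $F_n$. At stage $n$ we have a finite list of ``allowed'' patterns on $F_n$. Each of them must reappear inside every translate of some finite window $C_nF_n$, so that minimality holds by the symmetric syndetic criterion \cite[Proposition~3.10]{XY24}, as in Proposition~\ref{prop:binary_realization}. We also place each pattern at positions in every $N_G(x_0,V_j)$, for a countable base $(V_j)$, with $j\le n$; this is possible because these sets are right syndetic and finite intersections of return-time sets of minimal points along the orbit are handled one pattern at a time. For strong mixing we need $N_G(W,W')$ to be cofinite for cylinders $W,W'$. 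At stage $n$ we therefore arrange that, for all $g$ outside a finite set $L_n$, the patterns $P,Q$ appear at positions $t$ and $gt$ simultaneously. We do this by filling the free coordinates with a ``universal'' tiling in which every pair of stage-$n$ patterns is realized at every displacement outside $L_n$. Freeness is obtained by Proposition~\ref{prop:free_extension}: we take the product with a free minimal Cantor flow, or we directly encode, in a third symbol, a marker configuration whose stabilizer is trivial. Since strong mixing of the product needs care, we prefer to build the marker into the same recursion.

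The main obstacle is the compatibility of three simultaneous requirements: syndetic recurrence (minimality), coverage of all large displacements (strong mixing), and synchronization with the returns of $x_0$ (disjointness). Strong mixing demands occurrences at essentially arbitrary relative positions, whereas minimality demands bounded gaps. For this we would first use the paper's Theorem~\ref{thm:sm_sets} machinery. We choose $A$ with $\Int_K(A)\Int_K(A)^{-1}$ cofinite and right syndetic: for instance, $A$ is built from a thick-and-syndetic configuration of Xu--Ye type \cite[Theorem~B]{XY24} intersected with return sets of $x_0$. We then verify that the orbit closure of $\one_A$, or a free extension of it, is strongly mixing, minimal, and disjoint from $X$. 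The residual delicate point is that disjointness must be checked against the full flow $X$ and not just the single point $x_0$. This reduces to minimality of $X$, since every $x\in X$ is approximated by translates of $x_0$.
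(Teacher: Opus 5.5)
There is a genuine gap in the disjointness step, which is the heart of the theorem. Your concrete criterion would suffice if imposed for every pattern occurring in $y$ and every basic open $V_j\subseteq X$, not only neighbourhoods of $x_0$. Then $N_G((x_0,y),O)$ is right syndetic for every non-empty open $O\subseteq X\times Y$. By Lemma~\ref{lem:minimality}, $(x_0,y)$ is a minimal point with dense orbit, so $X\times Y$ is minimal. But this only restates minimality of $X\times Y$ at one point, and nothing in your plan produces it.

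\emph{The closing reduction is invalid.} Approximating $x\in X$ by translates of $x_0$ transfers nothing. A dense orbit of one pair is compatible with non-disjointness: for a non-trivial metrizable minimal weakly mixing $X=Y$, the product $X\times X$ has transitive points, yet the diagonal is a joining.

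\emph{The concrete recipe destroys disjointness.} Suppose $\suppplus(y)\subseteq N_G(x_0,V)$ with $\overline V\ne X$. Then $(gy)(e)=1$ forces $gx_0\in V$. So $\overline{G(x_0,y)}$ misses the non-empty open set $(X\setminus\overline V)\times\{w\in Y:w(e)=1\}$ and is a proper joining, i.e.\ $X\not\perp Y$.

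\emph{``One pattern at a time'' does not survive the recursion.} To meet every $N_G(x_0,V_j)$, each large block $p_m$ must itself be written at centres $t$ with $tx_0$ anywhere in $X$. The lower occurrences $ut$ it carries must then meet $N_G(\xi,V)$, with $\xi=tx_0$, for essentially every $\xi\in X$ at once. Syndeticity of $N_G(x_0,V)$ along the single orbit of $x_0$ gives no control over that.

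The missing ingredient is a synchronization that is uniform over all of $X$: the separated covering property \cite[Theorem~6.2]{GTWZ21}. For every finite $D\subseteq G$ and non-empty open $U\subseteq X$, it supplies a finite $D$-separated $E$ with $E^{-1}U=X$. The paper builds $Y$ without reference to any point of $X$. At stage $r$ it writes disjoint exact copies of $q_r$ on the blocks $D_rs\tau$, $s\in E_r$; separation is exactly what makes these copies compatible. This gives $E_r(\tau z)\subseteq W_r$. Lemma~\ref{lem:relative_sync} then finishes:
\begin{itemize}
\item given any $(x,y')\in X\times Y$, minimality of $Y$ yields $h$ with $hy'\in\bigcap_{s\in E}s^{-1}W_k$;
\item $E^{-1}U_m=X$ yields $s\in E$ with $shx\in U_m$ and $shy'\in W_k$.
\end{itemize}
Hence every orbit of $X\times Y$ is dense.

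Two lesser points.
\begin{itemize}
\item A single marker configuration with trivial stabilizer controls only that point; it does not make the orbit closure free. The paper imposes the closed shift-invariant colouring constraints $c_n(g_{n+1}h)\ne c_n(h)$, which every point of $Y$ inherits.
\item The cofinitely many mixing displacements cannot be realized at one finite stage. They must be handled one pair $(k,\gamma)$ per stage, with guard copies of the intermediate levels placed next to the two cores so that the uniform covering constants $S_i^8$ survive.
\end{itemize}
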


This paper is organized as follows. 
Section~\ref{sec:preliminaries} fixes the group, action, return-time, and pattern conventions and records the extension tools used later.  
Section~\ref{sec:proof} develops the finite-pattern recursion and proves Theorem~\ref{thm:equivalent_characterizations}, then derives the finite-deletion and two-piece consequences.
Section~\ref{sec:mixing_characterizations} proves Theorems~\ref{thm:wm_allorders} and~\ref{thm:sm_sets} by adding bitranslation-thick or cofinite bridge conditions, and then constructs free mixing realizations and disjoint partners.  
Section~\ref{sec:disjointness_applications} applies these results to dynamically syndetic and thick sets, pointwise recurrence, and disjointness.

\section{Preliminaries}\label{sec:preliminaries}

We fix the right-translation conventions for subsets of $G$ and return-time sets, 
then recall finite binary patterns and the extension tools used below.

\subsection{Subsets of groups}
Let $G$ be a countably infinite discrete group with identity $e$.
For subsets $E,H\subseteq G$, write $EH=\{ab\colon a\in E,\ b\in H\}$ and $E^{-1}=\{a^{-1}\colon a\in E\}$. 
Set multiplication will always be used in the displayed order.  
A subset $A$ of $G$ is called \emph{right syndetic} if there is a finite set $C\subset G$ with $G=CA$,
and \emph{right thick} if for every finite $K\subset G$, there exists $g\in G$ such that $Kg\subset A$.
A superset of a right syndetic set is right syndetic.
A cofinite subset $L$ of $G$ is right syndetic:
choose $\ell_0\in L$ and put
\[
 C=\{e\}\cup\{g\ell_0^{-1}\colon g\in G\setminus L\}.
\]
Then $C$ is finite and $G=CL$.
The definitions imply that a set $A\subseteq G$ is right syndetic if and only if $A\cap B\neq\varnothing$ for every right thick $B\subseteq G$. Equivalently, $A$ is not right syndetic if and only if $G\setminus A$ is right thick.

For a finite set $K\subseteq G$ and $A\subseteq G$,
following~\cite[Section~2]{CCK14} we define the right $K$-interior of $A$ as
\[
 \Int_K(A):=\{t\in G\colon Kt\subseteq A\}.
\]

A subset $A$ of $G$ is \emph{right thickly syndetic} if
$\Int_K(A)$ is right syndetic for every finite $K\subseteq G$, 
and \emph{right piecewise syndetic} if $K^{-1}A$ is right thick for some finite $K\subseteq G$.  
The definitions imply that a set $A\subseteq G$ is right piecewise syndetic if and only if $A\cap B\neq\varnothing$ for every right thickly syndetic $B\subseteq G$, see e.g.~\cite[Lemma~2.2]{XY24}.

\begin{lemma}\label{lem:packing}
Let $D\subseteq G$ be finite and non-empty, and let $\Lambda\subseteq G$ be finite. 
Suppose $M\subseteq\Lambda$ is inclusion-maximal subject to the sets $Dt$, $t\in M$, being pairwise disjoint. 
Then
\[
 \Lambda\subseteq D^{-1}DM.
\]
In particular, if $D\subseteq S=S^{-1}$, then $\Lambda\subseteq S^2M$.
\end{lemma}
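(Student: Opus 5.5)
The plan is a direct maximality argument. Fix $\lambda\in\Lambda$; the goal is to show $\lambda\in D^{-1}DM$. There are two cases.

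If $\lambda\in M$, pick any $d\in D$, which is possible since $D\neq\varnothing$. Then $\lambda=d^{-1}d\lambda\in D^{-1}DM$.

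If $\lambda\notin M$, consider $M\cup\{\lambda\}$, which is still a subset of $\Lambda$ and strictly larger than $M$. By inclusion-maximality of $M$, the family of translates $Dt$, $t\in M\cup\{\lambda\}$, cannot be pairwise disjoint. The translates indexed by $M$ are pairwise disjoint by hypothesis, so the failure must involve $D\lambda$: there is some $t\in M$ with $D\lambda\cap Dt\neq\varnothing$. (Here $t\neq\lambda$, and right translation is injective, so the sets $Dt$ for distinct $t$ are genuinely distinct indices; no degenerate coincidence arises.) Choose $d_1,d_2\in D$ with $d_1\lambda=d_2t$. Then $\lambda=d_1^{-1}d_2t\in D^{-1}DM$.

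For the final assertion, $D\subseteq S=S^{-1}$ gives $D^{-1}\subseteq S^{-1}=S$, hence $D^{-1}D\subseteq S\cdot S=S^2$. Therefore $\Lambda\subseteq D^{-1}DM\subseteq S^2M$.

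There is no substantive obstacle. The only points needing care are the two cases ($\lambda\in M$ versus $\lambda\notin M$) and keeping the order of multiplication consistent with the right-translation convention: one solves $d_1\lambda=d_2t$ by multiplying on the left by $d_1^{-1}$. Finiteness of $\Lambda$ is used only to guarantee that a maximal $M$ exists, and the statement already assumes one is given.
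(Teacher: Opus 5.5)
Your proposal is correct and follows essentially the same argument as the paper: for $\lambda\notin M$, maximality yields $t\in M$ with $D\lambda\cap Dt\neq\varnothing$, giving $\lambda\in D^{-1}Dt$, while for $\lambda\in M$ one uses $e\in D^{-1}D$. The final inclusion $D^{-1}D\subseteq S^2$ is handled just as in the paper.
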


\begin{proof}
For $\ell\in\Lambda\setminus M$, maximality produces $t\in M$ with $D\ell\cap Dt\ne\varnothing$. 
Write $d\ell=d't$ with $d,d'\in D$.
Then $\ell=d^{-1}d't\in D^{-1}Dt$. 
For $\ell\in M$, use $e\in D^{-1}D$. 
This also covers the case $\Lambda=\varnothing$.
\end{proof}

For a family $\mathcal A$ of subsets of $G$, its upward closure is
\[
\mathcal{A}^{\uparrow}:=
 \{B\subseteq G:\text{there is }A\in\mathcal A\text{ with }A\subseteq B\}.
\]
A  \emph{proper filter} on $G$ is a non-empty upward-closed family of subsets of $G$, closed under finite intersections and not containing $\varnothing$. 
A proper filter $\mathscr F$ is called \emph{syndetic} if every member of $\mathscr F$ is right syndetic.
For a family $\mathscr F$ of subsets of $G$, put
\[
 A-\mathscr F:=\{g\in G:g^{-1}A\in\mathscr F\}.
\]
For families $\mathscr F,\mathscr H$ of subsets of $G$, define
\[
 \mathscr F+\mathscr H  :=\{A\subseteq G:A-\mathscr H\in\mathscr F\}.
\]
Thus $A-\mathscr F$ is a subset of $G$, whereas $\mathscr F+\mathscr H$ is a family of subsets of $G$. A proper filter $\mathscr F$ is called \emph{idempotent} if
\[
 \mathscr F\subseteq\mathscr F+\mathscr F,
\]
equivalently,
\[
 A-\mathscr F\in\mathscr F
 \qquad\text{for every }A\in\mathscr F.
\]

\subsection{Topological dynamics}

A \emph{$G$-flow} is a non-empty compact Hausdorff space $X$ equipped with a continuous left action of $G$. 
We write $gx$ for the image of $x$ under $g$. 
A subset $Y\subseteq X$ is invariant if $GY=Y$. 
For a non-empty closed invariant subset $Y$, the restricted action on $Y$ is a \emph{subflow}. 
A $G$-flow $X$ is \emph{minimal} if every orbit is dense or equivalently, if $X$ has no proper subflows.
A point $x\in X$ is a \emph{transitive point} if its orbit $Gx$ is dense in $X$.
A point $x\in X$ is \emph{minimal} if $(\cl{Gx},G)$ is minimal.
For $x\in X$ and $U,V\subset X$, let 
\[
 N_G(x,U):=\{g\in G:gx\in U\}\quad \text{and} \quad N_G(U,V):=\{g\in G:gU\cap V\neq \varnothing\}.
\]

A topological space is \emph{zero-dimensional} if it has a basis of clopen sets.
A \emph{Cantor space} is a non-empty compact metrizable, zero-dimensional space with no isolated  points. 
A \emph{joining} of compact $G$-flows $X$ and $Y$ is a non-empty closed invariant subset of $X\times Y$ whose two coordinate projections are
onto. The flows $X$ and $Y$ are \emph{disjoint} if their only joining is $X\times Y$, denoted by $X \perp Y$.
The following lemma goes back to Gottschalk and Hedlund~\cite{GH55}.
\begin{lemma}\label{lem:minimality}
Let $X$ be a $G$-flow.
Then a point $x\in X$ is minimal if and only if $N_G(x,U)$ is right syndetic for every open neighbourhood $U$ of $x$.
Moreover, if $X$ is minimal, then for every $x\in X$ and non-empty open set $U\subset X$, $N_G(x,U)$ is right syndetic.
\end{lemma}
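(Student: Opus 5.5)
The plan is to prove both directions of the first assertion directly, and then derive the second from the first.

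For the forward direction, assume $x$ is minimal, so $Y:=\cl{Gx}$ is a minimal subflow. Let $U$ be an open neighbourhood of $x$ in $X$; then $U\cap Y$ is a neighbourhood of $x$ in $Y$. The key step is right syndeticity of $N_G(x,U)$. Since $Y$ is minimal, every $y\in Y$ has a dense orbit, so there is $g\in G$ with $gy\in U$, that is, $y\in g^{-1}U$. Hence $\{g^{-1}U\cap Y\colon g\in G\}$ covers $Y$. By compactness a finite subfamily suffices: there is a finite $C\subseteq G$ with $Y\subseteq\bigcup_{c\in C}c^{-1}U$. Now fix $h\in G$. Since $hx\in Y$, some $c\in C$ gives $chx\in U$, so $ch\in N_G(x,U)$ and $h\in c^{-1}N_G(x,U)$. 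Therefore $G=C^{-1}N_G(x,U)$, which is right syndetic.

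For the converse, assume $N_G(x,U)$ is right syndetic for every neighbourhood $U$ of $x$, and suppose $\cl{Gx}$ is not minimal. Then it contains a proper minimal subflow $Z$; such a $Z$ exists by Zorn's lemma and compactness. We have $x\notin Z$, since otherwise $\cl{Gx}\subseteq Z$. The step I expect to need the most care is turning $x\notin Z$ into a contradiction. First, by regularity of the compact Hausdorff space $X$, choose open sets $U\ni x$ and $W\supseteq Z$ with $\cl U\cap W=\varnothing$. Next, write $G=CN_G(x,U)$ with $C$ finite. Put $W':=\bigcap_{c\in C}cW$. This set is open and contains $Z$, because $Z$ is invariant, so $c^{-1}Z=Z\subseteq W$ for each $c$. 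Since $Z\subseteq\cl{Gx}$, there is $h\in G$ with $hx\in W'$. Writing $h=cg$ with $c\in C$ and $g\in N_G(x,U)$, we get $gx=c^{-1}hx\in W$. But also $gx\in U$, contradicting $U\cap W=\varnothing$. Hence $\cl{Gx}$ is minimal.

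For the second assertion, suppose $X$ is minimal, $x\in X$, and $U$ is a non-empty open set. Every point of a minimal flow is minimal, and $Gx$ is dense, so there is $g_0\in G$ with $g_0x\in U$. Then $U$ is a neighbourhood of $g_0x$, and the first part shows that $N_G(g_0x,U)$ is right syndetic, say $G=CN_G(g_0x,U)$. Since $N_G(x,U)=N_G(g_0x,U)\,g_0$, we obtain $G=Gg_0=CN_G(x,U)$. So $N_G(x,U)$ is right syndetic, keeping the right-translation convention throughout.
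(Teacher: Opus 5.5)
Your proof is correct in all three parts. The compactness cover $Y\subseteq\bigcup_{c\in C}c^{-1}U$ gives $G=C^{-1}N_G(x,U)$. The open set $W'=\bigcap_{c\in C}cW$ around a proper minimal subflow $Z\not\ni x$ yields the converse. The identity $N_G(x,U)=N_G(g_0x,U)\,g_0$ correctly handles an arbitrary non-empty open $U$ under the right-translation convention.

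The paper gives no proof of its own and only attributes the lemma to Gottschalk and Hedlund; your argument is the classical one.
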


Right syndeticity of neighbourhood return-time sets does not imply thickness. 
The next lemma shows that this distinction persists in every non-trivial minimal flow.
\begin{lemma}\label{lem:nonthick_return}
Let $X$ be a non-trivial minimal $G$-flow.
For every $x\in X$, there exists a neighbourhood $U$ of $x$ such that $N_G(x,U)$ is not right thick.
\end{lemma}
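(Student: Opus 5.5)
We will use the fact that a non-trivial minimal flow has two points that no single element of $G$ can bring simultaneously close to a third configuration. Let $x\in X$ and choose $y\in X$ with $y\neq x$; this is possible since $X$ is non-trivial. By Hausdorffness, pick disjoint open neighbourhoods $U\ni x$ and $W\ni y$.

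The first step is to find a finite set of group elements that carries $x$ out of $U$. By minimality, $y\in\cl{Gx}$, so there is $h\in G$ with $hx\in W$. In particular $hx\notin U$. We then aim to shrink $U$ so that this failure persists uniformly near the whole orbit closure, which is where thickness would be contradicted.

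The cleaner route is to argue by contradiction. Suppose $N_G(x,U)$ is right thick for some neighbourhood $U$ of $x$ with $\cl{U}\neq X$. We can arrange $\cl U\neq X$ by choosing $U$ inside a closed neighbourhood disjoint from $W$.
\begin{itemize}
\item Take the finite set $K\subseteq G$ from a syndeticity witness. By Lemma~\ref{lem:minimality}, $N_G(x,W)$ is right syndetic, so $G=CN_G(x,W)$ for some finite $C$.
\item By right thickness, there is $g\in G$ with $C^{-1}g\subseteq N_G(x,U)$.
\item Write $g=cs$ with $c\in C$ and $s\in N_G(x,W)$. Then $s=c^{-1}g\in C^{-1}g\subseteq N_G(x,U)$.
\item Hence $sx\in U\cap W=\varnothing$, which is a contradiction.
\end{itemize}
Thus $N_G(x,U)$ is not right thick for any neighbourhood $U$ of $x$ that is disjoint from some non-empty open $W$. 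Such a $U$ exists whenever $X$ has at least two points.

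The only subtle point is the order of translations. Right syndetic means $G=CA$, and right thick means $Kg\subseteq A$. We must therefore apply thickness to the finite set $C^{-1}$ and use $s=c^{-1}g$, so that the conventions match. No further obstacle is expected.
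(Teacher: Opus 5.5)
Your argument is correct, but it takes a different route from the paper's.

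\textbf{Your route.} You use the syndetic--thick duality. If $U\cap W=\varnothing$, then $N_G(x,U)$ and $N_G(x,W)$ are disjoint. The second set is right syndetic by the ``moreover'' part of Lemma~\ref{lem:minimality}, and a right thick set must meet every right syndetic set. Applying thickness to $K=C^{-1}$ and writing $s=c^{-1}g$ gets the translation order right.

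\textbf{The paper's route.} The paper picks $g$ with $gx\neq x$ and shrinks to $U=V\cap g^{-1}W$, so that $U\cap gU=\varnothing$. Then the two-element set $\{e,g^{-1}\}$ has no right translate inside $N_G(x,U)$: if $hx\in U$ and $g^{-1}hx\in U$, then $hx\in U\cap gU$.

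\textbf{What each buys.} The paper uses minimality only to guarantee that $x$ is not a fixed point. So its argument works for any non-fixed point of any flow and gives an explicit two-point obstruction, but only for one specially shrunk $U$. Your argument uses minimality essentially, namely syndetic returns to an open set not containing $x$. In exchange, it shows that \emph{every} neighbourhood $U$ of $x$ with $\overline{U}\neq X$ has a non-thick return set. Your obstruction $C^{-1}$ has size governed by a syndeticity constant rather than being two points.

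\textbf{Superfluous material.} Your opening material plays no role and can be deleted. This includes the element $h$ with $hx\in W$, the plan to ``shrink $U$'', and the remark about arranging $\overline U\neq X$ via a closed neighbourhood. The condition $U\cap W=\varnothing$ is all the bullet-point argument needs.
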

\begin{proof}
Fix $x\in X$. Because $X$ is minimal, $\cl{Gx}=X$.
Since $X$ has more than one point, there exists $g\in G$ such that $gx\neq x$.
Pick disjoint open subsets $V,W\subset X$ with $x\in V$ and $gx\in W$.
Let $U=V\cap g^{-1}W$. 
Then $U$ is a neighbourhood of $x$ and $U\cap gU=\varnothing$.
Let $A=N_G(x,U)$ and $F=\{e,g^{-1}\}$.
If there exists some $h\in G$ such that $Fh=\{h,g^{-1}h\}\subset A$,
that is, $hx\in U$ and $g^{-1}hx\in U$. 
Then $hx\in U\cap gU$, that is a contradiction.
Hence, $A=N_G(x,U)$ is not right thick.
\end{proof}

\begin{lemma}\label{lem:self_hitting_difference}
Let $x$ be a transitive point of a $G$-flow $X$ and 
let $V\subseteq X$ be non-empty and open. 
Then
\begin{equation*}
N_G(V,V)=N_G(x,V)N_G(x,V)^{-1}.
\end{equation*}
\end{lemma}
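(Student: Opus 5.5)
We prove the two inclusions directly, using only the definitions of $N_G(V,V)$ and $N_G(x,V)$, together with the density of the orbit $Gx$.

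For the inclusion $N_G(x,V)N_G(x,V)^{-1}\subseteq N_G(V,V)$, take $g=ab^{-1}$ with $a,b\in N_G(x,V)$, so that $ax\in V$ and $bx\in V$. Put $y=bx\in V$. Then
\[
gy=ab^{-1}bx=ax\in V,
\]
so $gy\in gV\cap V$. Hence $gV\cap V\neq\varnothing$, that is, $g\in N_G(V,V)$. This direction does not use transitivity.

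For the reverse inclusion, let $g\in N_G(V,V)$, so that $gV\cap V\neq\varnothing$. Set
\[
W:=V\cap g^{-1}V.
\]
Since $g^{-1}$ acts by a homeomorphism, $g^{-1}V$ is open, so $W$ is open. It is also non-empty, because $gV\cap V\neq\varnothing$ is equivalent to $V\cap g^{-1}V\neq\varnothing$.

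Because $x$ is a transitive point, $Gx$ is dense in $X$, and so there is $b\in G$ with $bx\in W$. Then $bx\in V$, which gives $b\in N_G(x,V)$. Also $bx\in g^{-1}V$ means $gbx\in V$, so $a:=gb\in N_G(x,V)$. Consequently
\[
g=ab^{-1}\in N_G(x,V)N_G(x,V)^{-1}.
\]

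There is no real obstacle in this argument. The only point needing care is the ordering convention: with the left action, the product must be written as $ab^{-1}$ and not $a^{-1}b$. The set $W=V\cap g^{-1}V$ is chosen precisely so that the element $b$ found by density produces the factorization in this order.
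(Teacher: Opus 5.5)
Your proof is correct and follows essentially the same route as the paper's proof, with only the letters swapped. You get the easy inclusion by evaluating at $bx$, and the reverse inclusion by using density of $Gx$ in the non-empty open set $V\cap g^{-1}V$ to write $g=(gb)b^{-1}$.
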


\begin{proof}
If $g=ba^{-1}$ with $a,b\in N_G(x,V)$, then $ax,bx\in V$ and $g(ax)=bx\in V$, 
so $g\in N_G(V,V)$. 
Conversely, if $g\in N_G(V,V)$, the open set $V\cap g^{-1}V$ is non-empty. 
Transitivity produces $a\in G$ with $ax\in V\cap g^{-1}V$. 
Then $a,ga\in N_G(x,V)$ and
$g=(ga)a^{-1}\in N_G(x,V) N_G(x,V)^{-1}$. 
\end{proof}

Let $X$ be a $G$-flow and $U,V\subset X$. Then the definitions show that for any $a,b\in G$,
\begin{equation*}
   aN_G(U,V)b^{-1}=N_G(bU,aV).
\end{equation*}

A $G$-flow $X$ is \emph{topologically transitive} if for any two non-empty open subsets $U,V\subset X$, $N_G(U,V)\neq\varnothing$.
By Baire's category theorem in a metric topologically transitive flow $X$ the set of transitive points is a dense $G_\delta$ subset of $X$.
A $G$-flow $X$ is \emph{topologically weakly mixing of all finite orders} if, for every $m\ge2$, the diagonal action on $X^m$ is topologically transitive.
If $G$ is abelian, topological weak mixing of all finite orders is equivalent to standard weak mixing. We refer the reader to~\cite{G05}.
A $G$-flow $X$ is \emph{topologically strongly mixing} if for any two non-empty open subsets $U,V\subset X$, $N_G(U,V)$ is cofinite. 

\subsection{Finite patterns and their exact occurrences}

Equip $2^G=\{0,1\}^G$ with the product topology and the action
\begin{equation}\label{eq:shift}
 (gx)(h)=x(hg).
\end{equation}
This is a left action, since $g_1(g_2x)(h)=x(hg_1g_2)=((g_1g_2)x)(h)$.
Since $G$ is countable, $2^G$ is compact and metrizable. 
A \emph{subshift} is a non-empty closed invariant subset of $2^G$.
For $B\subseteq G$, its characteristic function $\one_B\in2^G$
is defined by $\one_B(g)=1$ if $g\in B$ and $\one_B(g)=0$ otherwise.
We write $B^c=G\setminus B$.  
For $x\in2^G$, define its positive support by
\[
 \suppplus(x):=\{g\in G:x(g)=1\}.
\]

Finite-pattern notation follows the standard symbolic-dynamics convention; see, for example,~\cite[Chapter~1]{CC23}. 
A finite binary pattern is a function  $p:D\to\{0,1\}$ on a finite set $D\subseteq G$. 
Its positive support is
\[
 \suppplus(p)=\{d\in D:p(d)=1\}.
\]
For another pattern $q:V\to\{0,1\}$, define the \emph{occurrences} of $p$ in $q$ by
\begin{equation*}
 \Occ_q(p)=\{t\in G:Dt\subseteq V,
                  \ q(dt)=p(d)\text{ for all }d\in D\}.
\end{equation*}
Define $\Occ_x(p)$ for $x\in2^G$ by the same formula with $V=G$. Thus an occurrence centre $t$ records agreement on the entire translated domain $Dt$, including both the zero and the one entries.
For a pattern $p$ on $D$, write $U[p]=\{z\in2^G:z|_D=p\}$ for the corresponding cylinder.
If $p=x|_D$, then
\[
N_G(x,U[p])=\Occ_x(p).
\]
The notion of a symmetrically right syndetic set was introduced in \cite[Definition~6.9]{KRS22}. 
Here, we adopt a slightly modified version of the definition, following \cite[Proposition~3.9]{XY24}.
A subset $B\subseteq G$ is \emph{symmetrically right syndetic} if
for all finite sets $E_1\subseteq B$ and $E_2\subseteq B^c$, the set
\[
 \bigcap_{a\in E_1}a^{-1}B\ \cap\
 \bigcap_{b\in E_2}b^{-1}B^c
\]
is right syndetic. Empty intersections are interpreted as $G$.

The following criterion is~\cite[Proposition~3.10]{XY24} for non-empty proper subsets. 
With the empty-intersection convention it also includes $\varnothing$ and $G$. 
We include the short proof for completeness.

\begin{lemma}\label{lem:symmetric}
A subset $B\subseteq G$ is symmetrically right syndetic if and only
if $\one_B$ is a minimal point in $2^G$.
\end{lemma}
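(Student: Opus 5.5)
The plan is to reduce both directions to Lemma~\ref{lem:minimality}, applied to the point $x=\one_B$ in the Bernoulli shift $2^G$, together with the identity $N_G(x,U[p])=\Occ_x(p)$ for cylinders. The key observation is that the set appearing in the definition of symmetric right syndeticity is exactly an occurrence set. Fix finite $E_1\subseteq B$ and $E_2\subseteq B^c$, put $D=E_1\cup E_2$, and let $p=\one_B|_D$. Then $t\in\Occ_{\one_B}(p)$ means $\one_B(dt)=p(d)$ for all $d\in D$, that is, $at\in B$ for $a\in E_1$ and $bt\in B^c$ for $b\in E_2$. This is equivalent to
\[
t\in\bigcap_{a\in E_1}a^{-1}B\cap\bigcap_{b\in E_2}b^{-1}B^c .
\]
So the intersection equals $N_G(\one_B,U[\one_B|_D])$.

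Conversely, every finite $D\subseteq G$ splits uniquely as $E_1=D\cap B$ and $E_2=D\cap B^c$. The cylinders $U[\one_B|_D]$, for finite $D$, form a neighbourhood base of $\one_B$ in the product topology. The case $D=\varnothing$ gives the whole space and matches the empty-intersection convention, under which the set is $G$, which is right syndetic.

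\emph{($\Leftarrow$).} Suppose $\one_B$ is minimal. By Lemma~\ref{lem:minimality}, every neighbourhood return-time set $N_G(\one_B,U)$ is right syndetic. Applying this to the cylinder $U[\one_B|_{E_1\cup E_2}]$ and using the identification above shows that each required intersection is right syndetic.

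\emph{($\Rightarrow$).} Suppose $B$ is symmetrically right syndetic, and let $U$ be any open neighbourhood of $\one_B$. Choose a finite $D$ with $U[\one_B|_D]\subseteq U$. Then $N_G(\one_B,U)\supseteq N_G(\one_B,U[\one_B|_D])$, and the latter is right syndetic by hypothesis, via the identification. Since supersets of right syndetic sets are right syndetic, Lemma~\ref{lem:minimality} gives that $\one_B$ is minimal.

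There is no real obstacle. The only point needing care is checking the right-translation convention $(t\one_B)(d)=\one_B(dt)$, so that occurrence centres correspond to the sets $a^{-1}B$ rather than $Ba^{-1}$. The degenerate cases $B=\varnothing$ and $B=G$ are absorbed by the empty-intersection convention, since $\one_\varnothing$ and $\one_G$ are fixed points.
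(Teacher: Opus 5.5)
Your proof is correct and follows the paper's argument. Both identify the intersection in the definition with $N_G(\one_B,U[\one_B|_{E_1\cup E_2}])$, use that these cylinders form a neighbourhood basis at $\one_B$ (every pair $E_1,E_2$ arising from $D=E_1\cup E_2$), and conclude via Lemma~\ref{lem:minimality}; your explicit superset step for arbitrary open neighbourhoods and your treatment of the empty-intersection convention simply spell out what the paper leaves implicit.
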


\begin{proof}
For a finite set $D\subseteq G$, put $p=\one_B|_D$. 
Then
\[
 N_G(\one_B,U[p])=
 \bigcap_{a\in D\cap B}a^{-1}B\ \cap\
 \bigcap_{b\in D\cap B^c}b^{-1}B^c.
\]
These cylinders form a neighbourhood basis at $\one_B$. 
Conversely, any pair $E_1\subseteq B$, $E_2\subseteq B^c$ occurs in this formula with $D=E_1\cup E_2$. 
The assertion follows from
Lemma~\ref{lem:minimality}.
\end{proof}

An exact copy of $p$ placed on $Dt$ creates no $1$ outside $A$ if and only if $t\in\Int_{\suppplus(p)}(A)$. 
The zero entries have no support restriction, but they must remain unchanged if the occurrence is to
survive. 
We shall ensure this by placing copies on disjoint domains.

The following observation records the multiplication order for inherited occurrences. 
If $q$ occurs exactly on $Vt$ in a pattern $w$, and $u\in\Occ_q(p)$, then
\begin{equation}\label{eq:inheritance}
 ut\in\Occ_w(p).
\end{equation}
Indeed, $Dut\subseteq Vt$ and $w(dut)=q(du)=p(d)$ for each $d\in D$.

\begin{lemma}\label{lem:finite_covers}
Let $D_1\subseteq D_2\subseteq\cdots$ be finite sets with $\bigcup_rD_r=G$. 
Let $p_r\in\{0,1\}^{D_r}$ satisfy $p_{r+1}|_{D_r}=p_r$. 
Suppose that for each $i$ there is a finite $Q_i\subseteq G$ such that
\begin{equation}\label{eq:fixed_cover}
 D_r\subseteq Q_i\Occ_{p_r}(p_i)\qquad(r\ge i).
\end{equation}
Then the unique configuration $x$ extending every $p_r$ is minimal. 
If also $p_r(e)=1$ and $\suppplus(p_r)\subseteq A$ for every $r$,
then $x(e)=1$ and $\suppplus(x)\subseteq A$.
\end{lemma}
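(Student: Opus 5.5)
The plan is to verify directly that $x$ is a minimal point via Lemma~\ref{lem:minimality}, using the cylinders $U[p_i]$, $i\ge1$, as a neighbourhood basis at $x$. First, $x$ is well defined. Since the $D_r$ increase to $G$ and $p_{r+1}|_{D_r}=p_r$, setting $x(g)=p_r(g)$ for any $r$ with $g\in D_r$ gives a consistent configuration with $x|_{D_r}=p_r$ for all $r$. Every cylinder around $x$ contains some $U[x|_{D_i}]=U[p_i]$, because any finite $D$ lies in some $D_i$. It therefore suffices to show that $N_G(x,U[p_i])=\Occ_x(p_i)$ is right syndetic for each $i$.

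Fix $i$. For every $r\ge i$, an occurrence of $p_i$ in $p_r$ is an occurrence in $x$, since $x|_{D_r}=p_r$. This is the special case $t=e$ of \eqref{eq:inheritance}, so $\Occ_{p_r}(p_i)\subseteq\Occ_x(p_i)$. Applying \eqref{eq:fixed_cover}, we get
\[
D_r\subseteq Q_i\Occ_{p_r}(p_i)\subseteq Q_i\Occ_x(p_i)\qquad(r\ge i).
\]
Taking the union over $r\ge i$ gives $G=Q_i\Occ_x(p_i)$, with $Q_i$ finite. Hence $\Occ_x(p_i)$ is right syndetic, and Lemma~\ref{lem:minimality} shows that $x$ is minimal.

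For the support claim, choose $r$ with $e\in D_r$. Then $x(e)=p_r(e)=1$. If $x(g)=1$, choose $r$ with $g\in D_r$; then $p_r(g)=1$, so $g\in\suppplus(p_r)\subseteq A$. Thus $\suppplus(x)\subseteq A$.

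The argument is essentially routine. The only point needing care is the step where \eqref{eq:fixed_cover} is used: the uniformity of $Q_i$ in $r$ is exactly what allows the union over $r$ to produce a single finite syndeticity constant.
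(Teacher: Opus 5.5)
Your proposal is correct and follows the paper's proof: each occurrence of $p_i$ in $p_r$ is an occurrence in $x$, the uniform finite set $Q_i$ then gives $G=Q_i\Occ_x(p_i)$, and Lemma~\ref{lem:minimality} applies because the cylinders $U[p_i]$ form a neighbourhood basis at $x$. The support and $x(e)=1$ claims likewise follow from nestedness and exhaustion, as in the paper.
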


\begin{proof}
Fix $i$ and $g\in G$. Choose $r\ge i$ with $g\in D_r$. 
By \eqref{eq:fixed_cover}, write $g=qt$ with $q\in Q_i$ and $t\in\Occ_{p_r}(p_i)$. 
Since $x$ extends $p_r$, this is also an exact occurrence in $x$. 
Therefore
\[
 G=Q_i\Occ_x(p_i).
\]
The cylinders $U[p_i]$ form a neighbourhood basis at $x$. 
Their return-time sets are right syndetic, so $x$ is minimal.
The assertions about the support and the value at $e$ follow from nestedness and exhaustion of $G$.
\end{proof}

\subsection{Free actions}\label{sec:free}

For a point $x$ in a $G$-flow $X$, its stabilizer is
$\operatorname{Stab}_G(x)=\{g\in G:gx=x\}$.
An action is \emph{free} if every point has stabilizer $\{e\}$. 
A \emph{factor map} is a continuous surjection commuting with the actions; its domain is called an extension of its range. 

The following two lemmas are folklore; we provide their proofs for the sake of complement.
\begin{lemma}\label{lem:compact_free}
Every countable group $G$ admits a non-empty compact metrizable, zero-dimensional free $G$-flow.
\end{lemma}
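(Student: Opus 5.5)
We plan to exhibit the free flow concretely as a closed invariant subset of a product of shifts, and to use compactness to pass from a free action on finite data to a global one. The most direct candidate is the free part of a Bernoulli shift. Consider $2^G$ with the action \eqref{eq:shift}, or better $\{0,1\}^{G}$ replaced by $\{0,1\}^{G\times\mathbb N}$. Neither the full shift nor its closed subsets of the form $\{x: gx\neq x\ \forall g\ne e\}$ are obviously closed, since the set of free points is only $G_\delta$. So instead we follow the standard construction: for each $g\neq e$ we build a closed invariant set on which $g$ has no fixed points, and then take products over the countably many $g$.

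\textbf{Step 1: one element at a time.} Fix $g\in G\setminus\{e\}$. We look for a compact metrizable zero-dimensional $G$-flow $X_g$ with no point fixed by $g$. If $g$ has infinite order, or more generally for any $g\neq e$, we consider the closed invariant subset
\[
X_g=\{x\in \{0,1,2\}^G : x(h)\neq x(gh)\ \text{for all } h\in G\}.
\]
This set is a subshift of $\{0,1,2\}^G$ under the action $(kx)(h)=x(hk)$. The defining condition involves the left multiplication $h\mapsto gh$, which commutes with right translation, so $X_g$ is invariant. It is closed because each condition depends on two coordinates.

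Every $x\in X_g$ satisfies $x(e)\ne x(g)$. A point with $gx=x$ would satisfy $x(hg)=x(h)$ for all $h$; in particular $x(g)=x(e)$, a contradiction. Hence no point of $X_g$ is fixed by $g$.

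\textbf{Step 2: non-emptiness of $X_g$, which is the main obstacle.} We need a colouring $c:G\to\{0,1,2\}$ with $c(h)\ne c(gh)$ for all $h$. Consider the graph on $G$ with edges $h\sim gh$. Each vertex has degree at most $2$, since its only neighbours are $gh$ and $g^{-1}h$. Each connected component is a right coset $\langle g\rangle h$, which is either a bi-infinite path or a finite cycle; if $g^2=e$ the component is a single edge. A graph of maximum degree $2$ is properly $3$-colourable, so we colour each component separately using the axiom of choice over cosets; $G$ is countable, so a choice of coset representatives exists anyway. Thus $X_g\neq\varnothing$.

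\textbf{Step 3: product.} Set $Y=\prod_{g\ne e}X_g$ with the diagonal action. This is a countable product of non-empty compact metrizable zero-dimensional spaces, so it is again compact, metrizable and zero-dimensional. If $ky=y$ for some $k\neq e$, then the coordinate in $X_k$ is fixed by $k$, which contradicts Step 1. Therefore the action on $Y$ is free.

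Since everything else is routine, the only point requiring care is the colouring in Step 2, in particular handling finite cycles of odd length. This is exactly why three colours are used rather than two.
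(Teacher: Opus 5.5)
Your proposal is correct and is essentially the paper's proof. The paper also takes, for each $s\ne e$, the closed invariant set of colourings in $\{0,1,2\}^G$ that are proper along the edges $h\sim sh$, and shows it is non-empty by colouring each right coset $\langle s\rangle h$ (a bi-infinite path, an even cycle or single edge, or an odd cycle needing the third colour). It then takes the product over $s\ne e$ with the diagonal action and reads off freeness at the coordinate indexed by $g$ from $c_g(g)\ne c_g(e)$, exactly as you do.
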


\begin{proof}
For each $s\in G\setminus\{e\}$, let
\[
	Y_s=\{c\in\{0,1,2\}^G:c(sh)\ne c(h)\text{ for every }h\in G\}.
\]
To see that $Y_s$ is non-empty, partition $G$ into right cosets of $\langle s\rangle$. 
On a coset $\langle s\rangle h$, join each 	$s^nh$ to $s^{n+1}h$. 
This graph is a bi-infinite path if $s$ has infinite order, 
an even cycle (or a single edge) if its order is even, and an odd cycle if its order is odd. 
In the first two cases use alternating colors $0,1$. 
For an odd cycle of length $m\ge3$,	color the first $m-1$ consecutive vertices alternately $0,1$ and the last vertex $2$. 
In every case adjacent vertices have different colors, producing an element of $Y_s$.
	
Each $Y_s$ is closed in $\{0,1,2\}^G$ and is invariant under $(gc)(h)=c(hg)$: for every $h\in G$,
\[
	(gc)(sh)=c(shg)\ne c(hg)=(gc)(h).
\]
Let $Y=\prod_{s\ne e}Y_s$, with the diagonal action.
It is non-empty, compact, metrizable and zero-dimensional since the index set and $G$ are countable. 
If $g\ne e$ and	$y=(c_s)_{s\ne e}\in Y$, its coordinate indexed by $s=g$ satisfies
\[
	(gc_g)(e)=c_g(g)\ne c_g(e).
\]
Therefore $gy\ne y$, and the action on $Y$ is free. 
\end{proof}

\begin{proposition}\label{prop:free_extension}
For every minimal $G$-flow $X$, there exists a free minimal $G$-flow $Z$ and a factor map $\pi:Z\to X$. 
If $X$ is metrizable, then $Z$ can be chosen metrizable.
If $X$ is zero-dimensional, then $Z$ can be chosen zero-dimensional.
For every $x\in X$, every $z\in\pi^{-1}(x)$, and every open neighbourhood $U$ of $x$, one has
\begin{equation}\label{eq:return_lift}
N_G(z,\pi^{-1}(U))=N_G(x,U).
\end{equation}
\end{proposition}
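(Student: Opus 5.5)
The plan is to take the product of $X$ with the free flow from Lemma~\ref{lem:compact_free} and pass to a minimal subflow. Let $Y$ be the non-empty compact metrizable zero-dimensional free $G$-flow given by Lemma~\ref{lem:compact_free}. Consider $X\times Y$ with the diagonal action. It is compact, and metrizable (respectively zero-dimensional) whenever $X$ is. By Zorn's lemma it contains a minimal subflow $Z$.

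Let $\pi:Z\to X$ be the first-coordinate projection. It is continuous and equivariant. Its image is a non-empty closed invariant subset of the minimal flow $X$, so it equals $X$ and $\pi$ is a factor map. The action on $Z$ is free: if $g(x,y)=(x,y)$, then $gy=y$, which forces $g=e$ by freeness of $Y$. Closed subspaces inherit metrizability and zero-dimensionality, so $Z$ has the extra properties whenever $X$ does.

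For the identity~\eqref{eq:return_lift}, fix $z\in\pi^{-1}(x)$ and an open neighbourhood $U$ of $x$. By equivariance, $g\in N_G(z,\pi^{-1}(U))$ if and only if $gz\in\pi^{-1}(U)$. This holds if and only if $\pi(gz)=g\pi(z)=gx\in U$, that is, if and only if $g\in N_G(x,U)$. So the two return-time sets coincide, and no minimality is needed for this step.

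There is no real obstacle here. The only points requiring care are the existence of a minimal subflow, which is the standard Zorn argument on compact invariant sets, and the observation that freeness passes to subflows of a product with one free factor.
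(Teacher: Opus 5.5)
Your proposal is correct and follows the paper's proof essentially step for step. The shared steps are: the product with the free flow of Lemma~\ref{lem:compact_free}, a minimal subflow $Z$ obtained by Zorn's lemma, the first-coordinate projection (onto by minimality of $X$), freeness inherited from the $Y$-coordinate, metrizability and zero-dimensionality passing to the closed subspace $Z$, and the identity~\eqref{eq:return_lift} from equivariance alone.
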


\begin{proof}
Take the free $G$-flow $Y$ in Lemma~\ref{lem:compact_free} and consider the diagonal action on $X\times Y$. 
The family of its non-empty closed invariant subsets has a minimal member $Z$ under inclusion: 
a chain has non-empty intersection by compactness, 
and Zorn's lemma, applied with reverse inclusion, shows such a member.
For every $z\in Z$, the orbit closure $\cl{Gz}$ is a non-empty closed invariant subset of $Z$, consequently equals $Z$. 
Thus $Z$ is minimal. It is compact Hausdorff, and the action on it is free because the $Y$ coordinate is free. 
If $X$ is metrizable, then $X\times Y$ and consequently its closed subspace $Z$ are metrizable.
The restricted first-coordinate projection $\pi:Z\to X$ is continuous and equivariant. 
Its image is non-empty, compact and invariant; since $X$ is Hausdorff, $\pi(Z)$ is closed. 
Thus minimality of $X$ implies $\pi(Z)=X$.
If $X$ is zero-dimensional, the product $X\times Y$ and its subspace $Z$ are zero-dimensional. 
Finally, for $\pi(z)=x$,
\[
gz\in\pi^{-1}(U) \quad\Longleftrightarrow\quad
\pi(gz)=g\pi(z)=gx\in U,
\]
which establishes~\eqref{eq:return_lift}. 
Continuity of $\pi$ establishes the assertions concerning open and clopen neighbourhoods.
\end{proof}

\section{Equivalent characterizations of dynamically central syndetic sets}\label{sec:proof}

This section develops the finite-pattern construction that produces a minimal binary realization inside a prescribed set $F$, 
and then derives the equivalent characterizations stated in 
Theorem~\ref{thm:equivalent_characterizations}.

\subsection{Word geometry within subgroup cosets}

The finite-set geometry below is used only inside the subgroup generated by a fixed finite symmetric set; see~\cite{L17,AHO20} for related word-metric constructions.

Let $S=S^{-1}\subseteq G$ be finite and contain $e$. 
Define $S^0=\{e\}$, and let $S^k$ denote the product of $k$ copies of $S$.
The sequence $(S^k)_{k\ge0}$ is increasing. 
In $H=\langle S\rangle$, write
\[
|h|_S=\min\{k\in\mathbb{N}_0\colon h\in S^k\}.
\]
On $G$ define the extended distance
\begin{equation*}
	d_S(a,b)=
	\begin{cases}
		|ab^{-1}|_S,&ab^{-1}\in H,\\
		+\infty,&ab^{-1}\notin H.
	\end{cases}
\end{equation*}
On each right coset $Hb$, this is the graph distance whose edges join $u$ to $su$ for $s\in S\setminus\{e\}$. 
In particular, points in the same coset are joined by a shortest path of integer length.
The distance is invariant under simultaneous right translation of both arguments. 
For a point $a\in G$ and a non-empty finite set $V\subseteq G$, put
\[
\delta_S(a,V)=\min_{v\in V}d_S(a,v).
\]
For $k\in\mathbb{N}_0$ we have
\begin{equation}\label{eq:distance_products}
	S^kV=\{a\in G\colon \delta_S(a,V)\le k\}.
\end{equation}

\begin{lemma}\label{lem:geodesic}
Let $S=S^{-1}\subset G$ be finite with $e\in S$, and let $a\in G$, $V\subset G$ be non-empty finite.
If $k:=\delta_S(a,V)<\infty$, then there is a path
\[
a=a_0,a_1,\ldots,a_k=v\in V
\]
with $d_S(a_j,a_{j+1})=1$ and $\delta_S(a_j,V)=k-j$. 
Moreover, if $d_S(a,b)\le m<\infty$, then $\delta_S(b,V)<\infty$ and
\[
	|\delta_S(a,V)-\delta_S(b,V)|\le m.
\]
\end{lemma}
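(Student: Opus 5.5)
The plan is to use the graph structure on right cosets of $H=\langle S\rangle$ and the triangle inequality for $d_S$. First I record the basic properties of $d_S$. It is symmetric, since $|h^{-1}|_S=|h|_S$ because $S=S^{-1}$. It satisfies the triangle inequality, since $S^jS^l=S^{j+l}$ and $ab^{-1}=(ac^{-1})(cb^{-1})$, with the convention that $+\infty$ absorbs. And $d_S(a,b)=1$ exactly when $a=sb$ with $s\in S\setminus\{e\}$.

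For the second assertion, suppose $d_S(a,b)\le m<\infty$. Choose $v\in V$ with $d_S(a,v)=\delta_S(a,V)$. Then $d_S(b,v)\le d_S(b,a)+d_S(a,v)$, which is finite, so $\delta_S(b,V)<\infty$. The same inequality gives $\delta_S(b,V)\le m+\delta_S(a,V)$. Exchanging the roles of $a$ and $b$, which is allowed since now both distances are finite, gives the reverse bound, and together these give $|\delta_S(a,V)-\delta_S(b,V)|\le m$.

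For the path, let $k=\delta_S(a,V)$ and pick $v\in V$ realizing it. Since $av^{-1}\in S^k$, write $av^{-1}=s_1s_2\cdots s_k$ with $s_i\in S$. Set $a_j:=s_{j+1}\cdots s_kv$, so that $a_0=a$, $a_k=v$, and $a_j=s_{j+1}a_{j+1}$. This gives $d_S(a_j,a_{j+1})\le1$ and $d_S(a_j,v)\le k-j$, because $a_jv^{-1}\in S^{k-j}$.

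Next I check the distance to $V$ along the path. We have $\delta_S(a_j,V)\le k-j$ directly. If $\delta_S(a_j,V)<k-j$, then by the triangle inequality
\[
\delta_S(a,V)\le d_S(a,a_j)+\delta_S(a_j,V)<j+(k-j)=k,
\]
using $d_S(a,a_j)\le j$, since $aa_j^{-1}=s_1\cdots s_j\in S^j$. This is a contradiction, so $\delta_S(a_j,V)=k-j$. In particular consecutive points have different $\delta_S$ values, so $a_j\neq a_{j+1}$ and hence $d_S(a_j,a_{j+1})=1$ exactly.

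The only delicate point is that a factor $s_i$ might equal $e$; this is excluded by the strict decrease of $\delta_S$ just established, and equivalently by minimality of $k$. Everything else is routine.
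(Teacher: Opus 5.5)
Your proof is correct and follows the paper's argument: you pick $v\in V$ realizing $\delta_S(a,V)$, follow a word of length $k$ from $a$ to $v$, rule out $\delta_S(a_j,V)<k-j$ by the triangle inequality (this is the paper's ``shorter path'' contradiction), and get the second assertion from the triangle inequality applied in both directions. Your explicit check that consecutive points are distinct, so that $d_S(a_j,a_{j+1})=1$ exactly, spells out what the paper leaves implicit in ``take a shortest word path.''
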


\begin{proof}
Choose $v\in V$ with $d_S(a,v)=k$ and take a shortest word path from $a$ to $v$ in $Hv$. 
The remaining part of the path has $\delta_S(a_j,V)\le k-j$. 
A strict inequality would produce a path from $a$ to $V$ shorter than $k$, a contradiction. 
The last assertion follows from the triangle inequality, first using $a$ and then $b$ as the intermediate point.
\end{proof}

\begin{lemma}\label{lem:buffer}
Let $S=S^{-1}\subset G$ be finite with $e\in S$, and let $D,C\subseteq S$ with $e\in D$. 
Let $L\subseteq G$ satisfy $G=CL$. 
For any non-empty finite $V\subseteq G$ and any
\[
	a\in S^RV\setminus S^2V,\qquad R\ge3,
\]
where $R$ is an integer, there is $\ell\in L$ such that
\begin{equation*}
	D\ell\subseteq S^5V\setminus V,
	\qquad a\in S^{R-2}\ell.
\end{equation*}
\end{lemma}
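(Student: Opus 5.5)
The plan is to use the extended word distance $d_S$ and the description \eqref{eq:distance_products} of the sets $S^kV$ as metric balls. First I would locate a point of $G$ at distance exactly $3$ from $V$ on a geodesic from $a$ to $V$. Then I would move from that point to a nearby element of $L$ using $G=CL$ with $C\subseteq S$.

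First, put $k:=\delta_S(a,V)$. By \eqref{eq:distance_products}, $a\in S^RV$ gives $k\le R<\infty$, and $a\notin S^2V$ gives $k\ge 3$. Lemma~\ref{lem:geodesic} then supplies a path $a=a_0,a_1,\dots,a_k\in V$ with $d_S(a_j,a_{j+1})=1$ and $\delta_S(a_j,V)=k-j$. Set $b:=a_{k-3}$; this index is admissible since $k\ge3$. Then $\delta_S(b,V)=3$ and $d_S(a,b)\le k-3$.

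Next, since $G=CL$, I would write $b=c\ell$ with $c\in C\subseteq S$ and $\ell\in L$. Then $\ell b^{-1}=c^{-1}\in S$, since $S=S^{-1}$, so $d_S(\ell,b)\le1$. The last part of Lemma~\ref{lem:geodesic} gives $2\le\delta_S(\ell,V)\le4$. For any $d\in D\subseteq S$ we have $(d\ell)\ell^{-1}=d\in S$, so $d_S(d\ell,\ell)\le1$. Applying Lemma~\ref{lem:geodesic} once more gives $1\le\delta_S(d\ell,V)\le5$. By \eqref{eq:distance_products}, the upper bound gives $d\ell\in S^5V$. The lower bound gives $d\ell\notin S^0V=V$. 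Hence $D\ell\subseteq S^5V\setminus V$.

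Finally, by the triangle inequality, $d_S(a,\ell)\le d_S(a,b)+d_S(b,\ell)\le (k-3)+1=k-2\le R-2$. Applying \eqref{eq:distance_products} with the singleton $\{\ell\}$ in place of $V$ yields $a\in S^{R-2}\ell$.

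There is no serious obstacle. The only points needing care are the two bookkeeping steps. One is choosing the geodesic point at distance exactly $3$, not $2$, so that a unit move to $\ell$ followed by a unit move to $d\ell$ cannot land in $V$, while $\ell$ stays within distance $k-2$ of $a$. The other is checking that the right-invariant metric $d_S(x,y)=|xy^{-1}|_S$ matches left multiplication by elements of $S$, so that $d_S(s x,x)\le1$ for every $s\in S$. The hypothesis $e\in D$ is not needed for this argument.
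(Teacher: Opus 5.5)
Your proof is correct and follows the paper's argument essentially step for step. You pick the geodesic point at distance exactly $3$ from $V$, write it as $c\ell$ using $G=CL$, and bound $\delta_S(d\ell,V)$ and $d_S(a,\ell)$ via Lemma~\ref{lem:geodesic} and the triangle inequality. The paper bounds $d_S(d\ell,c\ell)\le 2$ in one step rather than passing through $\ell$, which is immaterial, and your remark that $e\in D$ is not used here is accurate.
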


\begin{proof}
By \eqref{eq:distance_products}, the integer $k=\delta_S(a,V)$ satisfies $3\le k\le R$. 
Along the path supplied by Lemma~\ref{lem:geodesic}, choose $z$ with
\[
	\delta_S(z,V)=3,\qquad d_S(a,z)=k-3.
\]
Write $z=c\ell$ with $c\in C$ and $\ell\in L$. 
Then $d_S(z,\ell)\le1$. 
If $y=d\ell\in D\ell$, then
\[
	d_S(y,z)=|dc^{-1}|_S\le2.
\]
Lemma~\ref{lem:geodesic} produces $1\le\delta_S(y,V)\le5$. 
Thus $D\ell\subseteq S^5V\setminus V$.
Also $d_S(a,\ell)\le k-2\le R-2$, which is equivalent to $a\in S^{R-2}\ell$.
\end{proof}

\subsection{A binary realization proposition}
The next proposition is the construction that supplies the minimal binary point required in Theorem~\ref{thm:equivalent_characterizations}(2)$\implies$(3).
Its induction recursively inserts exact copies of the previously constructed finite patterns and makes their occurrences permanently syndetic, while keeping all positive coordinates inside the prescribed set.

\begin{proposition}\label{prop:binary_realization}
Let $F\subseteq G$ with $e\in F$. 
If  $\Int_K(F)$ is right syndetic for every finite $K\subseteq F$, 
then there is a minimal point $x\in 2^G$ such that
\[
x(e)=1 \quad\text{and}\quad\suppplus(x)\subseteq F.
\]
In particular, $F$ is dynamically central syndetic.
\end{proposition}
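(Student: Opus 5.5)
We build nested finite patterns $p_r\in\{0,1\}^{D_r}$ with $D_1\subseteq D_2\subseteq\cdots$ exhausting $G$, $p_r(e)=1$, $\suppplus(p_r)\subseteq F$, and $p_{r+1}|_{D_r}=p_r$. We then verify the fixed-cover condition \eqref{eq:fixed_cover} of Lemma~\ref{lem:finite_covers}, which yields a minimal $x$ with $x(e)=1$ and $\suppplus(x)\subseteq F$. The final assertion then follows by taking $X=\cl{Gx}$ and $U=U[x|_{\{e\}}]$: we get $N_G(x,U)=\suppplus(x)\subseteq F$, and this is a neighbourhood return-time set at a minimal point.

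Fix an enumeration $G=\{g_1=e,g_2,\dots\}$. Start with $D_1=\{e\}$ and $p_1(e)=1$. At stage $r$, given $p_r$ on $D_r$, let $K_r=\suppplus(p_r)\subseteq F$. By hypothesis $L_r:=\Int_{K_r}(F)$ is right syndetic, say $G=C_rL_r$. Choose a finite symmetric $S_r\ni e$ containing $D_r$ and $C_r$; the $S_r$ may be taken increasing, and the choice is made once per index so that the constant $Q_i$ depends only on $i$. The new domain is $D_{r+1}=S^{R}D_r$, with $S=S_r$ and $R\ge 3$ large enough that $g_{r+1}\in D_{r+1}$. On $D_{r+1}\setminus D_r$ we place copies of $p_r$ as follows.

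\begin{enumerate}
\item Let $\Lambda=\{\ell\in L_r : D_r\ell\subseteq D_{r+1}\setminus D_r\}$.
\item Choose $M\subseteq\Lambda$ maximal such that the translates $D_r\ell$, $\ell\in M$, are pairwise disjoint.
\item Set $p_{r+1}(d\ell)=p_r(d)$ for $d\in D_r$, $\ell\in M$, and set $p_{r+1}=0$ elsewhere off $D_r$.
\end{enumerate}

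Since $\ell\in\Int_{K_r}(F)$, each copy puts its $1$'s inside $F$, so $\suppplus(p_{r+1})\subseteq F$. Disjointness guarantees that each $\ell\in M$ is an exact occurrence of $p_r$ in $p_{r+1}$.

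For the covering estimate, take $a\in D_{r+1}\setminus S^2D_r$. Lemma~\ref{lem:buffer}, applied with $V=D_r$ and $D=D_r$, gives $\ell\in L_r$ with $D_r\ell\subseteq S^5D_r\setminus D_r\subseteq D_{r+1}$ (taking $R\ge5$) and $a\in S^{R-2}\ell$. Combined with Lemma~\ref{lem:packing}, this shows that every point of $D_{r+1}$ lies in $S^{c}M\cup S^{2}D_r$ for some bounded exponent $c$. Since $e\in\Occ_{p_r}(p_r)$, the second piece is covered with $Q=S^2$.

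\textbf{Main obstacle.} The difficulty is that the covering distance must stay bounded independently of $R$, whereas Lemma~\ref{lem:buffer} only gives $a\in S^{R-2}\ell$. To fix this, I would not pack only near $D_r$. Instead, I would pack $p_r$-copies throughout the whole shell, using Lemma~\ref{lem:packing} on $\Lambda$ directly: every point at $S$-distance at least $3$ from both $D_r$ and the outer boundary lies within $S$-distance $1$ of some $\ell\in L_r$ with $D_r\ell$ inside the shell. Lemma~\ref{lem:packing} then places it within $S^{4}$ of some element of $M$. Points near $D_r$ are covered by $S^2D_r$. Points near the outer boundary are handled at the next stage, because they lie inside $D_{r+1}\subseteq S_{r+1}^2 D_{r+1}$.

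This gives $D_{r+1}\subseteq S_r^{4}\Occ_{p_{r+1}}(p_r)\cup S_r^2D_r$, and hence $D_{r+1}\subseteq S_r^4\Occ_{p_{r+1}}(p_r)$. Propagation to all later stages $r'>i$ uses inheritance \eqref{eq:inheritance}: each later level is covered by occurrences of $p_{r'-1}$ within $S_{r'-1}^4$. However, these $S$'s grow, so the constant would not be uniform in $r'$. The fix is to include in every later placement step the requirement that each copy of $p_{r'-1}$ already contains $p_i$-occurrences $S_i^4$-densely. By induction on the levels, this yields $D_{r'}\subseteq S_i^{4}\Occ_{p_{r'}}(p_i)$, and we can take $Q_i=S_i^4$. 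Getting this uniform-density induction exactly right, including the region near the boundary of the previous domain, is the delicate step.
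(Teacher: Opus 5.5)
\textbf{There is a genuine gap, and your proposed fix does not close it.}

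\emph{The holes between copies.} At stage $r$ your scheme writes only copies of $p_r$, centred in $L_r=\Int_{K_r}(F)$, and puts $0$ everywhere else in the shell. Maximality of $M$ gives only $\Lambda\subseteq D_r^{-1}D_rM$ (Lemma~\ref{lem:packing}). Hence the zero-filled holes between copies can have size of order $C_rD_r^{-1}D_r$. This is unbounded in $r$, both because $D_r$ grows and because $L_r$ may become sparser as $K_r$ grows. Already for $G=F=\mathbb Z$ and $D_r=[-n,n]$, a maximal packing may leave zero runs of length $2n$.

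Since $p_i(e)=1$, every occurrence centre of $p_i$ carries a $1$. So no centre lies in a hole, and points deep inside a hole avoid $Q_i\Occ_x(p_i)$ for every fixed $Q_i$. The orbit closure $\cl{Gx}$ can then even contain the fixed point $0$, so minimality fails.

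\emph{Why the fix fails.} Requiring dense $p_i$-occurrences inside each copy of $p_{r'-1}$ controls only points \emph{inside} copies, not the holes between them. So the induction $D_{r'}\subseteq S_i^4\Occ_{p_{r'}}(p_i)$ does not follow.

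\emph{The outer boundary.} Your inclusion $D_{r+1}\subseteq S_r^4\Occ_{p_{r+1}}(p_r)\cup S_r^2D_r$ silently drops the outer layer of $D_{r+1}$. That layer is not ``handled at the next stage'': the only new blocks there are copies of $p_{r+1}$, placed relative to $L_{r+1}$, which gives control only at scale $S_{r+1}$.

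\textbf{The missing ideas} are hierarchical filling and a boundary invariant.

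\emph{Descending levels.} In the paper, after the central block is kept, the levels $i=r,r-1,\dots,1$ are processed in descending order. At each level, a maximal family of copies of $p_i$ with centres in $L_i$ is packed into $V_i$, disjoint from everything already placed. Because $G=C_iL_i$ with $C_i\subseteq S_i$, any point that is not in a higher block and not near a boundary lies in $S_i^3M_i$. Thus the holes left at level $i$ have $S_i$-size, which is what makes $Q_i=S_i^8$ independent of the stage.

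\emph{Layered domains.} To control boundaries, the domain is built as $D_{r+1}=S_1^5S_2^5\cdots S_r^5B_{r+1}$. This gives each level its own annulus $V_i\setminus V_{i+1}$, where Lemma~\ref{lem:buffer} places level-$i$ copies.

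\emph{Frame invariant.} The construction also carries $\Fr_{i,j}\subseteq S_i^6\Occ_{p_j}(p_i)$. This is exactly what is needed when a level-$i$ candidate domain $D_i\ell$ collides with an earlier higher block $D_kt$ at a point $bt$, while $a$ itself lies outside that block. Then $b\in\Fr_{i,k}$, so a $p_i$-occurrence inside the block lies within $S_i^6$ of $b$, and inheritance~\eqref{eq:inheritance} transports it to $p_{r+1}$.

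Without multi-level packing and the frame invariant, the uniform condition~\eqref{eq:fixed_cover} of Lemma~\ref{lem:finite_covers} cannot be obtained.
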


\begin{proof}
Enumerate $G=\{g_1,g_2,\ldots\}$ with $g_1=e$.
We construct a sequence of patterns by induction. At each stage, the domains grow to exhaust $G$, the support condition keeps every positive coordinate inside the prescribed set, and the covering invariants make every earlier pattern recur syndetically in all later domains.

\noindent \textbf{Step 1: Initial construction.} 
Start with
\[
P_0=B_1=\{e\}, \quad D_1=P_0B_1
\]
and 
\[
p_1\colon D_1\to\{0,1\},\quad p_1(e)=1.
\]
Then $K_1:=\suppplus(p_1)=\{e\}\subseteq F$. 
Let $L_1=\Int_{K_1}(F)$.
Choose a finite $C_1\ni e$ with $G=C_1L_1$, 
and choose a finite symmetric $S_1$ containing $D_1\cup C_1$.

\noindent\textbf{Step 2: Induction hypothesis.}
Assume that the finite sets $B_i,D_i,C_i,S_i$ and the patterns
$p_i\in\{0,1\}^{D_i}$ have been constructed for $1\le i\le r$.
For $2\le i\le r$, put
\[
P_{i-1}=S_1^5S_2^5\cdots S_{i-1}^5,
\qquad
B_i=D_{i-1}\cup\{g_1,\dotsc,g_i\},
\]
and require $D_i=P_{i-1}B_i$ for $1\le i\le r$, with $P_0$
as fixed in Step~1.

The patterns are compatible and retain the prescribed positive support:
\[
p_{j+1}|_{D_j}=p_j \quad(1\le j<r),
\qquad
p_i(e)=1,\quad \suppplus(p_i)\subseteq F
\quad(1\le i\le r).
\]
For each $1\le i\le r$, set
\[
K_i:=\suppplus(p_i),
\qquad
L_i:=\Int_{K_i}(F).
\]
The finite set $C_i$ contains $e$ and satisfies $G=C_iL_i$,
while $S_i$ is finite and symmetric and contains $D_i\cup C_i$.

For $1\le i<j\le r$, define
\[
V_i^{(j)}=S_i^5S_{i+1}^5\cdots S_{j-1}^5B_j,
\qquad
V_j^{(j)}=B_j,
\]
and define the level-$i$ frame of $D_j$ by
\[
\Fr_{i,j}:=D_j\setminus S_i^3V_{i+1}^{(j)}.
\]
Assume the frame covering invariant
\[
\Fr_{i,j}\subseteq S_i^6\Occ_{p_j}(p_i)
\qquad(1\le i<j\le r),
\]
and the global covering invariant
\[
D_j\subseteq S_i^8\Occ_{p_j}(p_i)
\qquad(1\le i\le j\le r).
\]
The inclusion $D_1\subseteq S_1^8\Occ_{p_1}(p_1)$ holds
trivially, and no frame is needed at equal indices.

\textbf{Step 3: Inductive extension.}  Set
\begin{equation*}
  P_{r}=S_1^5S_2^5\cdots S_{r}^5, \quad B_{r+1}=D_r\cup\{g_1,\ldots,g_{r+1}\},
 \quad D_{r+1}=P_rB_{r+1}.
\end{equation*}
For $1\le i\le r+1$, write $V_i=V_i^{(r+1)}$. 
Thus
\[
 V_{r+1}=B_{r+1},\qquad V_i=S_i^5V_{i+1}
 \quad(1\le i\le r),\qquad V_1=D_{r+1}.
\]
For $1\le i\le r$, the product geometry implies
\[
 V_i=S_i^5V_{i+1}\subseteq D_{r+1}\subseteq S_i^6V_{i+1}.
\]
Moreover, if $b\in D_{r+1}$, $a\notin D_{r+1}$, and $a\in S_i^2b$, then
$b\in\Fr_{i,r+1}$: otherwise $b\in S_i^3V_{i+1}$ and
$a\in S_i^5V_{i+1}=V_i\subseteq D_{r+1}$.

Now we define a pattern $p_{r+1}$ on $D_{r+1}$.
Keep the central copy of $p_r$ on $D_r$ unchanged. 
Process the levels in descending order $r,r-1,\ldots,1$. 
Just before processing level $i$, 
let $\mathcal H_i$ be the family consisting of the central
domain $D_r$ and all external domains $D_kt$ with $k>i$ already placed at this stage. 
The external level-$k$ domains will be required to lie in $V_k$. Since $D_r\subseteq B_{r+1}$ and
$V_k\subseteq V_{i+1}$ for $k>i$, we have
\begin{equation}\label{eq:higher_core}
 \bigcup\mathcal H_i\subseteq V_{i+1}.
\end{equation}

Define the admissible centres at level $i$ by
\begin{equation*}
 \Lambda_i=
 \{\ell\in L_i:D_i\ell\subseteq V_i,
             \ D_i\ell\cap H=\varnothing
                       \text{ for every }H\in\mathcal H_i\}.
\end{equation*}
This set is finite, because $e\in D_i$ implies $\Lambda_i\subseteq V_i$. 
Choose an inclusion-maximal subset $M_i\subseteq\Lambda_i$ for which the sets $D_it$, $t\in M_i$, are pairwise disjoint. 
On each such domain write the translated pattern
\[
 p_{r+1}(dt)=p_i(d)\qquad(d\in D_i).
\]
These assignments do not conflict with any earlier assignment. 
At the next level, the newly written domains are included among the higher blocks. 
After processing level $1$, assign $0$ to every remaining point of $D_{r+1}$.

The resulting pattern extends $p_r$. 
Each newly assigned $1$ belongs to some $\suppplus(p_i)t\subseteq F$, since $t\in M_i\subseteq L_i$.
Therefore $\suppplus(p_{r+1})\subseteq F$ and $p_{r+1}(e)=1$. 
Lower-level assignments never enter a previously assigned domain. 
Thus each inserted copy remains exact, and
\begin{equation}\label{eq:selected_occurrences}
 M_i\subseteq\Occ_{p_{r+1}}(p_i).
\end{equation}
The same observation preserves all occurrences inherited inside any higher block, by \eqref{eq:inheritance}.

\medskip 
\noindent\textbf{Claim 1:} $\Fr_{i,r+1}\subseteq S_i^6\Occ_{p_{r+1}}(p_i)$ for  $1\le i \le r$.

\begin{proof}[Proof of Claim 1]
Fix $1\le i\le r$ and $a\in\Fr_{i,r+1}$. 
Since $D_{r+1}\subseteq S_i^6V_{i+1}$ and $\Fr_{i,r+1}=D_{r+1}\setminus S_i^3V_{i+1}$, we have
\[
 a\in S_i^6V_{i+1}\setminus S_i^3V_{i+1}.
\]
Apply Lemma~\ref{lem:buffer} with $R=6$ to $S_i,D_i,C_i,L_i$ and $V_{i+1}$.
The buffer lemma produces $\ell\in L_i$ such that
\[
 D_i\ell\subseteq S_i^5V_{i+1}\setminus V_{i+1} =V_i\setminus V_{i+1},
 \qquad a\in S_i^4\ell.
\]
By \eqref{eq:higher_core}, this domain avoids every higher block, so $\ell\in\Lambda_i$. 
Lemma~\ref{lem:packing} shows $\ell\in S_i^2t$ for some $t\in M_i$. 
Hence
\[
 a\in S_i^4S_i^2t=S_i^6t
   \subseteq S_i^6\Occ_{p_{r+1}}(p_i).
\]
This proves $\Fr_{i,r+1}\subseteq S_i^6\Occ_{p_{r+1}}(p_i)$.
\end{proof}

\medskip 
\noindent\textbf{Claim 2:} $D_{r+1}\subseteq S_i^8\Occ_{p_{r+1}}(p_i)$ for $1\le i \le r$.

\begin{proof}[Proof of Claim 2]
Fix $i\le r$ and $a\in D_{r+1}$. First test whether $a$ lies in a member of $\mathcal H_i$. If it does not, choose the candidate centre $\ell$ below and distinguish whether $D_i\ell$ leaves $V_i$, meets a member of $\mathcal H_i$ inside $V_i$, or lies in $V_i$ and avoids all members of $\mathcal H_i$. These alternatives lead to the following four cases.

\emph{Case 1: $a$ belongs to a higher block.}
Write $a=bt\in D_kt\in\mathcal H_i$. 
An external block has $k>i$; for the central block take $k=r$ and $t=e$, which also permits $k=i=r$. 
The induction hypothesis in Step~2 states that $D_k\subseteq S_i^8\Occ_{p_k}(p_i)$, so there exists $u\in\Occ_{p_k}(p_i)$ with $b\in S_i^8u$.
Since $D_kt$ carries an exact copy of $p_k$, the centre $ut$ is in $\Occ_{p_{r+1}}(p_i)$. 
Therefore
\[
 a\in S_i^8(ut).
\]

For the remaining cases, assume $a\notin\bigcup\mathcal H_i$.
Use $G=C_iL_i$ to write
\begin{equation}\label{eq:candidate_centre}
 a=c\ell,\qquad c\in C_i\subseteq S_i,\quad\ell\in L_i.
\end{equation}
Every $y=d\ell\in D_i\ell$ satisfies
\begin{equation}\label{eq:candidate_distance}
 ay^{-1}=cd^{-1}\in S_i^2,
 \qquad ya^{-1}=dc^{-1}\in S_i^2.
\end{equation}

\emph{Case 2: $D_i\ell$ is not contained in $V_i$.}
Choose $y\in D_i\ell\setminus V_i$. 
If $a\in S_i^3V_{i+1}$, then \eqref{eq:candidate_distance} would show $y\in S_i^5V_{i+1}=V_i$, a contradiction. 
Consequently $a\in\Fr_{i,r+1}$. 
The frame covering already proved produces
\[
 a\in S_i^6\Occ_{p_{r+1}}(p_i)
   \subseteq S_i^8\Occ_{p_{r+1}}(p_i).
\]

For the two remaining cases, assume $D_i\ell\subseteq V_i$.

\emph{Case 3: $D_i\ell\subseteq V_i$ and $D_i\ell$ meets $D_kt\in\mathcal H_i$.}
Choose
\[
 y=bt\in D_i\ell\cap D_kt,\qquad b\in D_k.
\]
Since $a$ lies outside every higher block, $at^{-1}\notin D_k$.
On the other hand, \eqref{eq:candidate_distance} shows
\begin{equation*}
 (at^{-1})b^{-1}=ay^{-1}\in S_i^2.
\end{equation*}
If $k>i$ and $b\notin\Fr_{i,k}$, then $b\in S_i^3V_{i+1}^{(k)}$; hence $at^{-1}\in S_i^5V_{i+1}^{(k)}=V_i^{(k)}\subseteq D_k$, a contradiction. Thus $b\in\Fr_{i,k}$.
Since $\Fr_{i,k}\subseteq S_i^6\Occ_{p_k}(p_i)$ by Step~2, choose
\[
 u\in\Occ_{p_k}(p_i),\qquad b\in S_i^6u.
\]
The inherited centre $ut$ belongs to $\Occ_{p_{r+1}}(p_i)$, and
\begin{equation*}
 a\in S_i^2y=S_i^2bt
   \subseteq S_i^2S_i^6ut=S_i^8ut.
\end{equation*}

The only possibility with $k=i$ is the central block at level $i=r$, so $t=e$. 
In that case $b\in D_i\subseteq S_i$ and \eqref{eq:candidate_distance} produces $a\in S_i^2b\subseteq S_i^3$.
The central occurrence $e\in\Occ_{p_{r+1}}(p_i)$ proves the desired covering. 
This case does not require a frame with equal indices.

\emph{Case 4: $D_i\ell$ is contained in $V_i$ and avoids every higher block.}
Then $\ell\in\Lambda_i$. 
By Lemma~\ref{lem:packing}, some $t\in M_i$ satisfies $\ell\in S_i^2t$. 
Using \eqref{eq:candidate_centre} and \eqref{eq:selected_occurrences}, we get
\[
 a\in S_i^3t\subseteq S_i^8\Occ_{p_{r+1}}(p_i).
\]

These cases exhaust $D_{r+1}$. 
Hence $D_{r+1}\subseteq S_i^8\Occ_{p_{r+1}}(p_i)$  holds for each $1\leq i\le r$.
\end{proof}

It remains to define the data needed for the next induction step.  
Since $p_{r+1}(e)=1$ and $\suppplus(p_{r+1})\subseteq F$, put
\[
 K_{r+1}:=\suppplus(p_{r+1}),
 \qquad L_{r+1}:=\Int_{K_{r+1}}(F).
\]
By the hypothesis on $F$, choose a finite set $C_{r+1}\ni e$ with $G=C_{r+1}L_{r+1}$.  
Choose a finite symmetric set $S_{r+1}\ni e$ such that
\[
D_{r+1}\cup C_{r+1}\subseteq S_{r+1}.
\]
Then $D_{r+1}\subseteq S_{r+1}^8\Occ_{p_{r+1}}(p_{r+1})$, because $D_{r+1}\subseteq S_{r+1}$ and
$e\in\Occ_{p_{r+1}}(p_{r+1})$.  
Thus all data required in the induction hypothesis are constructed at level $r+1$.

This completes the induction.

\noindent\textbf{Step 4: Passage to the limit and  Verification of the conclusion}
Since $g_r\in B_r\subseteq D_r$ for every $r$, the increasing domains $D_r$ exhaust $G$.  
The compatibility relations  $p_{r+1}|_{D_r}=p_r$ consequently define a unique configuration $x\in2^G$
with $x|_{D_r}=p_r$ for every $r$.
Applying Lemma~\ref{lem:finite_covers} with the fixed finite sets $Q_i=S_i^8$
proves that $x$ is minimal, $x(e)=1$, and $\suppplus(x)\subseteq F$.

Define
\[
 X=\cl{Gx},\qquad U=\{y\in X:y(e)=1\}.
\]
The set $U$ is clopen and contains $x$.  By \eqref{eq:shift},
\[
 g\in N_G(x,U)
 \quad\Longleftrightarrow\quad
 (gx)(e)=1
 \quad\Longleftrightarrow\quad
 x(g)=1.
\]
Consequently,
\[
 N_G(x,U)=\suppplus(x)\subseteq F,
\]
which implies that $F$ is dynamically central syndetic. 
\end{proof}

\subsection{Proof of Theorem~\ref{thm:equivalent_characterizations} and consequences}
\begin{proof}[Proof of Theorem~\ref{thm:equivalent_characterizations}]
(1)$\implies$(2). Choose a minimal $G$-flow $(X,G)$, a point $x\in X$,
and a neighbourhood $U\ni x$ such that
\[
 A:=N_G(x,U)\subseteq F.
\]
Then $e\in A$. For every finite $K\subseteq A$,
\begin{equation}\label{eq:interior_return_identity}
 \Int_K(A) =N_G\biggl(x,\bigcap_{k\in K}k^{-1}U\biggr).
\end{equation}
Indeed, for $t\in G$,
\[
\begin{aligned}
 t\in\Int_K(A)
 &\Longleftrightarrow kt\in N_G(x,U)\quad(k\in K)\\
 &\Longleftrightarrow (kt)x\in U\quad(k\in K)\\
 &\Longleftrightarrow tx\in k^{-1}U\quad(k\in K).
\end{aligned}
\]
Because $k\in A$ implies $kx\in U$, the intersection in \eqref{eq:interior_return_identity} is a neighbourhood of $x$. 
Minimality therefore makes its return-time set right syndetic.

(2)$\implies$(3). 
Apply Proposition~\ref{prop:binary_realization} to the witness $A$. 
The construction produces a minimal $z\in2^G$ with $z(e)=1$ and
$\suppplus(z)\subseteq A\subseteq F$. 
Put $B=\suppplus(z)$. 
Then $e\in B\subseteq F$ and $z=\one_B$, so (3) holds.

(3)$\implies$(1). In the orbit closure $X=\overline{G\one_A}$,
let $U=\{y\in X:y(e)=1\}$. 
This is a clopen neighbourhood of $\one_A$, and \eqref{eq:shift} shows that
\[
N_G(\one_A,U)=A\subseteq F.
\]
Thus $F$ is dynamically central syndetic.
\end{proof}

Applying Theorem~\ref{thm:equivalent_characterizations}, 
we obtain the following further equivalent characterizations of dynamically central syndetic sets.

\begin{proposition}\label{prop:more_equi_dyn_syn_sets}
Let $F\subseteq G$ with $e\in F$. The following five assertions are equivalent:
\begin{enumerate}[label=\textup{(\arabic*)}]
\item $F$ is dynamically central syndetic.
\item there exists a sequence of decreasing right syndetic sets
\[
F_1\supseteq F_2\supseteq\cdots,
\qquad F_n\subseteq F,
\]
such that for every $n$ and every finite $K\subseteq F_n$ there exists $m$ with $KF_m\subseteq F_n$.
\item There exists a minimal $G$-flow $(X,G)$, a point $x\in X$, and a non-empty
open set $U\subseteq X$ with $x\in\overline U$ such that
\[
N_G(x,U)\cup\{e\}\subseteq F.
\]
\item there exists a free minimal $G$-flow on a Cantor space $X$, a point $x\in X$, and a clopen neighbourhood
$U\ni x$ such that $N_G(x,U)\subseteq F$.
\item $F$ belongs to a syndetic, idempotent filter on $G$. 
\end{enumerate}
\end{proposition}
\begin{proof}
(1)$\implies$(2). Apply Theorem~\ref{thm:equivalent_characterizations}(3) to obtain a minimal binary point
$z$ with $z(e)=1$ and $\suppplus(z)\subseteq F$. 
Let $(U_n)$ be a decreasing clopen neighbourhood basis at $z$, with $U_1$ contained in the identity cylinder, and set $F_n=N_G(z,U_n)$. 
Then $F_n\subseteq F$ and each $F_n$ is right syndetic. 
If $K\subseteq F_n$ is finite, choose $m\ge n$ with
\[
 U_m\subseteq U_n\cap\bigcap_{k\in K}k^{-1}U_n.
\]
For $t\in F_m$ this produces $kt\in F_n$ for every $k\in K$, hence $KF_m\subseteq F_n$.

(2)$\implies$(1). Put $H_n=F_n\cup\{e\}$. 
The sets $H_n$ remain decreasing and right syndetic. They also retain the condition that for every $n$ and finite $K\subseteq H_n$ there is $m\ge n$ with $KH_m\subseteq H_n$. 
For every finite $K\subseteq H_1$, choose $m$ with $KH_m\subseteq H_1$. 
Then $H_m\subseteq\Int_K(H_1)$, so the finite-interior condition holds for $H_1\subseteq F$.
Thus, the assertion (2) of Theorem~\ref{thm:equivalent_characterizations} holds.

(1)$\implies$(3). This implication is immediate.

(3)$\implies$(1). Put $F_0=N_G(x,U)\cup\{e\}$. 
For finite $K\subseteq F_0$, let $H=K\setminus\{e\}$ and
\[
Q=\bigcap_{h\in H}h^{-1}U.
\]
This is a neighbourhood of $x$. 
Since $x\in\overline U$, the set $V=U\cap Q$ is non-empty and open, and $N_G(x,V)\subseteq\Int_K(F_0)$. 
Choose $g_0$ with $g_0x\in V$ and an open neighbourhood $W\ni x$ with $g_0W\subseteq V$.
Then $g_0N_G(x,W)\subseteq N_G(x,V)$. 
Minimality makes  $N_G(x,W)$ right syndetic, hence so is $N_G(x,V)$ and consequently $\Int_K(F_0)$.
Thus, the assertion (2) of Theorem~\ref{thm:equivalent_characterizations} holds.

(1)$\implies$(4). 
Use the minimal binary realization supplied by the assertion (3) of Theorem~\ref{thm:equivalent_characterizations}, and apply
Proposition~\ref{prop:free_extension}. 
The resulting extension is free, minimal, zero-dimensional, compact, and metrizable, and it preserves the return-time set. 
If it had an isolated point, minimality would make the whole space a finite discrete orbit, contradicting freeness and the infinitude of $G$. 
Hence it is a Cantor space. 
The identity cylinder pulls back to a clopen neighbourhood with return set contained in $F$.

(4)$\implies$(1). This implication is immediate.

(1)$\implies$(5). Let
\[
 \mathscr{F}_x:= \{N_G(x,V):V\text{ open and }x\in V\}^{\uparrow}.
\]
This is a filter because $N_G(x,V_1)\cap N_G(x,V_2)=N_G(x,V_1\cap V_2)$, and it is syndetic by minimality. 
If $A=N_G(x,V)$ and $g\in A$, then
\[
 g^{-1}A=N_G(x,g^{-1}V)\in\mathscr{F}_x.
\]
Consequently $A\subseteq A-\mathscr{F}_x$, 
so $A-\mathscr{F}_x\in\mathscr{F}_x$; upward closure implies
$\mathscr{F}_x\subseteq\mathscr{F}_x+\mathscr{F}_x$. 
Thus $F\in\mathscr{F}_x$.

(5)$\implies$(1). 
Let $\mathscr{F}$ be a syndetic, idempotent  filter containing $F$. 
Put
\[
 B:=F-\mathscr{F},\qquad A:=F\cap B.
\]
Since $F\in\mathscr{F}$, $e\in B$, consequently $e\in A$. If $g\in A$,
then $g^{-1}F\in\mathscr{F}$, and
\[
 g^{-1}B=(g^{-1}F)-\mathscr{F}\in\mathscr{F}
\]
by idempotence. 
Therefore $g^{-1}A\in\mathscr{F}$. 
For every finite $K\subseteq A$,
\[
 \Int_K(A)=\bigcap_{k\in K}k^{-1}A\in\mathscr{F},
\]
so this set is right syndetic. 
\end{proof}

\begin{corollary}\label{cor:finite_deletion_dcs}
Let $F\subseteq G$ be dynamically central syndetic and let
$K\subseteq G\setminus\{e\}$ be finite. 
Then $F\setminus K$ is dynamically central syndetic.
\end{corollary}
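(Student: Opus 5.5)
We argue through Theorem~\ref{thm:equivalent_characterizations}(1)$\Leftrightarrow$(2), or directly dynamically. The dynamical route is cleanest. Choose a minimal $G$-flow $X$, a point $x\in X$ and an open $U\ni x$ with $N_G(x,U)\subseteq F$. We may first pass to a flow in which $x$ has trivial stabilizer, or at least in which $kx\neq x$ for the finitely many $k\in K$. Proposition~\ref{prop:free_extension} supplies this: there is a free minimal $Z$ with factor map $\pi\colon Z\to X$. Pick any $z\in\pi^{-1}(x)$. By \eqref{eq:return_lift}, $N_G(z,\pi^{-1}(U))=N_G(x,U)\subseteq F$, and freeness gives $kz\neq z$ for every $k\in K$, since $k\neq e$.

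Next we shrink the neighbourhood. Compact $G$-flows are Hausdorff, so for each $k\in K$ we choose an open $W_k\ni z$ with $kz\notin\overline{W_k}$, i.e.\ $z\notin k^{-1}\overline{W_k}$. Setting
\[
U'=\pi^{-1}(U)\cap\bigcap_{k\in K}\bigl(W_k\setminus k^{-1}\overline{W_k}\bigr)
\]
gives an open neighbourhood of $z$, because $K$ is finite. If $k\in N_G(z,U')$ for some $k\in K$, then $kz\in U'\subseteq W_k$, which contradicts $kz\notin\overline{W_k}$. Simpler still, one can take $U'=\pi^{-1}(U)\setminus\{kz:k\in K\}$, which is open since finite sets are closed, and which contains $z$ because $kz\neq z$. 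Then $N_G(z,U')\subseteq N_G(z,\pi^{-1}(U))\setminus K\subseteq F\setminus K$. Hence $F\setminus K$ is dynamically central syndetic; note that $e\in F\setminus K$ since $e\notin K$.

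The only real obstacle is that in the original flow $X$ some $k\in K$ may fix $x$. Then every neighbourhood of $x$ returns at time $k$, and deleting $k$ is impossible there. The free extension of Proposition~\ref{prop:free_extension} removes exactly this obstruction, and the rest is the elementary shrinking above.
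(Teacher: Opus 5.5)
Your argument is correct, and it takes a different route from the paper.

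The paper works on the combinatorial side. It takes a witness $e\in A\subseteq F$ from Theorem~\ref{thm:equivalent_characterizations}(2) and puts $A'=A\setminus K$. For finite $H\subseteq A'$ it notes that $\Int_H(A')=\Int_H(A)\setminus\bigcup_{h\in H}h^{-1}K$. It then uses the fact that a right syndetic subset of an infinite group stays right syndetic after deleting finitely many points. The conclusion comes from (2)$\Rightarrow$(1), that is, ultimately from the recursive construction of Proposition~\ref{prop:binary_realization}.

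You stay on the dynamical side. Proposition~\ref{prop:free_extension} removes the only real obstruction, an element of $K$ fixing the base point. After that, deleting the finitely many points $kz$ from $\pi^{-1}(U)$ is elementary.

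Comparing the two:
\begin{itemize}
\item Your route is lighter. It uses only Zorn's lemma and the free flow of Lemma~\ref{lem:compact_free}, not the binary realization. It also shows conceptually why $e\notin K$ is exactly the right hypothesis.
\item The paper's route proves a stronger combinatorial fact: the class of finite-interior witnesses $e\in A$ is itself closed under such deletions.
\item The same computation gives $\Gamma_{A\setminus K}(H)=\Gamma_A(H)$ for finite $H\subseteq A\setminus K$. So the paper's argument transfers verbatim to the mixing characterizations in Theorems~\ref{thm:wm_allorders} and~\ref{thm:sm_sets}.
\item Your minimal subflow of $X\times Y$ need not inherit mixing from $X$. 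To get mixing analogues you would need a mixing free extension, such as the one built via Theorem~\ref{thm:free_strong_mixing_disjoint}.
\end{itemize}
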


\begin{proof}
Choose $e\in A\subseteq F$ as in Theorem~\ref{thm:equivalent_characterizations}(2)
and set $A'=A\setminus K$. For finite $H\subseteq A'$,
\[
	\Int_H(A')	=\Int_H(A)\cap \bigcap_{h\in H}(G\setminus h^{-1}K).
\]
This deletes only finitely many points from the right syndetic set $\Int_H(A)$, 
and a right syndetic subset of an infinite 	group remains right syndetic after finite deletion. 
Consequently, $A'$ satisfies Theorem~\ref{thm:equivalent_characterizations}(2), proving the claim.
\end{proof}

\begin{corollary}\label{cor:two_piece_dcs}
If $F\subseteq G$ is dynamically central syndetic, 
then there are disjoint sets $F_1,F_2\subseteq F$ such that $F_1\cup\{e\}$ and
$F_2\cup\{e\}$ are dynamically central syndetic.
\end{corollary}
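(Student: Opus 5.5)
We need to prove that a dynamically central syndetic $F$ splits into two disjoint pieces $F_1,F_2\subseteq F$ with each $F_i\cup\{e\}$ dynamically central syndetic.

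\textbf{Step 1: a minimal point and a non-trivial flow.} By Theorem~\ref{thm:equivalent_characterizations} we have a minimal $G$-flow $X$, a point $x\in X$ and an open neighbourhood $U$ of $x$ with $N_G(x,U)\subseteq F$. We may assume $X$ is non-trivial. If it is not, replace it via Proposition~\ref{prop:free_extension} by a free minimal extension $Z$ with $z\in\pi^{-1}(x)$. By \eqref{eq:return_lift}, $N_G(z,\pi^{-1}(U))=N_G(x,U)$, and $Z$ is infinite because $G$ is infinite and acts freely.

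\textbf{Step 2: the splitting.} Since $Z$ is infinite and Hausdorff and $Gz$ is dense, there is $g\in G$ with $gz\neq z$ and $gz\in\pi^{-1}(U)$. Choose disjoint open sets $W_1\ni z$ and $W_2\ni gz$, both contained in $W:=\pi^{-1}(U)$. Put
\[
F_1=N_G(z,W_1)\setminus\{e\},\qquad F_2=N_G(z,W_2).
\]
These are disjoint because $W_1\cap W_2=\varnothing$, and both lie in $N_G(z,W)\subseteq F$. Removing $e$ from $F_1$ is harmless; note that $e\notin F_2$ already, since $z\notin W_2$.

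\textbf{Step 3: both pieces are dynamically central syndetic.} For $F_1$: $F_1\cup\{e\}=N_G(z,W_1)$ is a neighbourhood return-time set of the minimal point $z$, so it is dynamically central syndetic by definition. For $F_2$: $W_2$ is a non-empty open set with $z\in\overline{W_2}$ only if $z$ lies in its closure, which fails here. Instead I use Proposition~\ref{prop:more_equi_dyn_syn_sets}(3) at the point $gz$, for which $W_2$ is a genuine neighbourhood. We have
\[
N_G(gz,W_2)=N_G(z,W_2)\,g^{-1}=F_2g^{-1},
\]
and that is a translate of $F_2$, not $F_2$ itself.

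\textbf{The main obstacle is exactly this translation mismatch.} The fix I would try first is to choose the second piece as a return set of $z$ itself, to an open set $W_2$ with $z\in\overline{W_2}\setminus W_2$. Then Proposition~\ref{prop:more_equi_dyn_syn_sets}(3) gives directly that $N_G(z,W_2)\cup\{e\}$ is dynamically central syndetic. To build it, the free extension $Z$ can be taken to be a Cantor space (Proposition~\ref{prop:more_equi_dyn_syn_sets}(4)), so $z$ is not isolated. Take a decreasing clopen basis $V_1\supsetneq V_2\supsetneq\cdots$ at $z$ inside $W$, with $V_1\neq W$ allowed. Set
\[
W_1'=\bigcup_n\bigl(V_{2n}\setminus V_{2n+1}\bigr),\qquad W_2'=\bigcup_n\bigl(V_{2n-1}\setminus V_{2n}\bigr).
\]
These are disjoint open sets, each having $z$ in its closure, neither containing $z$, and both contained in $W$. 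Hence $F_i=N_G(z,W_i')$ are disjoint subsets of $F$. Applying Proposition~\ref{prop:more_equi_dyn_syn_sets}(3)$\Rightarrow$(1) to $(Z,z,W_i')$ shows that each $F_i\cup\{e\}$ is dynamically central syndetic. This symmetric construction avoids the translation issue entirely.
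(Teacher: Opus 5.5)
Your final construction is correct and is essentially the paper's proof. The annuli $V_n\setminus V_{n+1}$ are exactly the pairwise disjoint non-empty clopen sets $C_n\subseteq W\setminus\{x\}$ converging to the point, which the paper splits by parity and then handles with Proposition~\ref{prop:more_equi_dyn_syn_sets}(3)$\Rightarrow$(1); the free Cantor realization of Proposition~\ref{prop:more_equi_dyn_syn_sets}(4) ensures the annuli are non-empty. The earlier attempt with $W_1\ni z$ and $W_2\ni gz$, and the detour through Proposition~\ref{prop:free_extension}, are unnecessary and can simply be deleted.
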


\begin{proof}
Use Proposition~\ref{prop:more_equi_dyn_syn_sets}(4) to choose a free minimal Cantor $G$-flow, $x\in X$, and a clopen $W\ni x$ with $N_G(x,W)\subseteq F$. 
Choose pairwise disjoint non-empty clopen sets $C_n\subseteq W\setminus\{x\}$ converging to $x$, and put
\[
	U_1=\bigcup_{n\ge1}C_{2n},\qquad
	U_2=\bigcup_{n\ge1}C_{2n-1},\qquad
	F_i=N_G(x,U_i).
\]
Then $F_1,F_2\subseteq F$, $F_1\cap F_2=\varnothing$, and $x\in\overline{U_i}$. 
Proposition~\ref{prop:more_equi_dyn_syn_sets}(3)
shows that $F_i\cup\{e\}$ is dynamically central syndetic for $i=1,2$.
\end{proof}

\section{Return-time sets of mixing minimal flows} \label{sec:mixing_characterizations}

Mixing imposes further conditions on the difference sets of neighbourhood return-time sets. 
The return-time sets themselves need not be right thick, even for a non-trivial minimal flow, as Lemma~\ref{lem:nonthick_return} shows. 
For topologically strongly mixing flows, self-hitting sets are cofinite; 
for flows that are weakly mixing of all finite orders, they satisfy a two-sided translation condition. 
The following two subsections show the corresponding return-time characterizations.

\subsection{Return-time sets of minimal flows weakly mixing of all finite orders}\label{subsec:all_orders}

A set $S\subseteq G$ is \emph{bitranslation thick} if for all finite $H\subseteq G$ there exists $c\in G$ such that
\begin{equation*}
HcH\subseteq S.
\end{equation*}
The property passes to supersets and is invariant under left and right translations. 
It expresses simultaneous largeness for finitely many prescribed left and right translates.
If $G$ is abelian, then bitranslation thickness is the standard thickness.

\begin{lemma}\label{lem:bt_finite_deletion}
If $S$ is bitranslation thick and $Q\subseteq G$ is finite, then $S\setminus Q$ is bitranslation thick.
\end{lemma}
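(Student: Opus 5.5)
We must show that for any finite $H\subseteq G$ there is $c\in G$ with $HcH\subseteq S\setminus Q$. The approach is to enlarge the test set and then use the freedom in choosing the centre. Given finite $H$ (we may assume $e\in H$ by enlarging), put $H'=H\cup\{h_1 q h_2 : \ldots\}$; more concretely, we want a centre $c$ for which no product $h c h'$ lands in $Q$. For fixed $h,h'\in H$, the condition $hch'\in Q$ means $c\in h^{-1}Qh'^{-1}$. So the set of bad centres
\[
 \mathcal B=H^{-1}QH^{-1}
\]
is finite, and any $c\notin\mathcal B$ with $HcH\subseteq S$ works. The task is therefore to find infinitely many, or at least one outside $\mathcal B$, centres $c$ with $HcH\subseteq S$.

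We obtain such a centre by applying bitranslation thickness to a larger finite set. Since $G$ is infinite, choose $g\in G$ with $g\notin H^{-1}\mathcal B H^{-1}$... A cleaner route is to pick finitely many elements $g_0=e,g_1,\dots,g_N$ and apply the hypothesis to $H''=H\cup Hg_1\cup\dots\cup Hg_N$. This gives $c'$ with $H''c'H''\subseteq S$, and in particular $H(g_jc')H\subseteq S$ for every $j$. The candidate centres $g_jc'$ are pairwise distinct for distinct $g_j$. If we choose $N=|\mathcal B|$ and the $g_j$ distinct (possible since $G$ is infinite, and $N+1$ distinct elements suffice), then the $N+1$ distinct candidates $g_0c',\dots,g_Nc'$ cannot all lie in the $N$-element set $\mathcal B$. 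Choosing $c=g_jc'\notin\mathcal B$ gives $HcH\subseteq S$ and $HcH\cap Q=\varnothing$, as required.

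There is no real obstacle here. The only point needing care is the multiplication order: the extra translates must be placed on the left of the centre (as $Hg_j$) so that $H(g_jc')H\subseteq H''c'H''$. The pigeonhole count against the finite bad set $H^{-1}QH^{-1}$ should also be checked.
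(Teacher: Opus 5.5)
Your argument is correct and is essentially the paper's proof: both enlarge $H$ by finitely many right translates $Hg_j$, apply bitranslation thickness once, and pigeonhole the resulting distinct centres $g_jc'$ against the finite bad set $H^{-1}QH^{-1}$. Your choice $g_0=e$ makes explicit that $H\subseteq H''$, which the paper uses implicitly when it writes $H'dH\subseteq S$. The abandoned false starts in your first two paragraphs should be deleted in a final write-up.
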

\begin{proof}
Fix a finite $H\subseteq G$. 
A center $c$ can fail $HcH\subseteq S\setminus Q$ only if $c\in H^{-1}QH^{-1}$. 
Choose distinct $r_1,\ldots,r_N\in G$ with $N>|H^{-1}QH^{-1}|$, let $H'=\bigcup_jHr_j$, and choose $d$ with $H'dH\subseteq S$. 
Each $c_j=r_jd$ satisfies $Hc_jH\subseteq S$, and one $c_j$ avoids $H^{-1}QH^{-1}$. 
That $c_j$ works. 
\end{proof}

The following lemma translates the dynamical finite-order mixing condition into the subset condition used in the construction.
\begin{lemma}\label{lem:allorders_self_hitting}
Let $X$ be a $G$-flow.
Then the following assertions are equivalent:
\begin{enumerate}
    \item $X$ is weakly mixing of all finite orders.
    \item For every pair of non-empty open sets $U,V\subseteq X$, $N_G(U,V)$ is bitranslation thick.
    \item  $X$ is topologically transitive and $N_G(V,V)$ is bitranslation thick for every non-empty open $V\subseteq X$.
\end{enumerate}
\end{lemma}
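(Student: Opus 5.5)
I will prove the cycle (1)$\Rightarrow$(2)$\Rightarrow$(3)$\Rightarrow$(1). The engine is the identity $aN_G(U,V)b^{-1}=N_G(bU,aV)$ recorded above, together with the observation that transitivity of the diagonal action on $X^m$ amounts to this: for non-empty open $U_1,\dots,U_m,V_1,\dots,V_m$, the intersection $\bigcap_j N_G(U_j,V_j)$ is non-empty.

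(1)$\Rightarrow$(2). Fix non-empty open $U,V$ and a finite $H=\{h_1,\dots,h_n\}$. We need $c$ with $h_icc h_j$ replaced properly, namely $h_i c h_j\in N_G(U,V)$ for all $i,j$. This is equivalent to
\[
c\in\bigcap_{i,j}h_i^{-1}N_G(U,V)h_j^{-1}=\bigcap_{i,j}N_G(h_jU,h_i^{-1}V).
\]
Each set $h_jU$ and $h_i^{-1}V$ is non-empty and open, since group elements act by homeomorphisms. Weak mixing of order $m=n^2$ therefore makes this finite intersection non-empty, and any element of it is the required centre $c$.

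(2)$\Rightarrow$(3). Taking $H=\{e\}$ shows that $N_G(U,V)\neq\varnothing$ for all non-empty open $U,V$, so $X$ is transitive. Taking $U=V$ gives the second condition.

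(3)$\Rightarrow$(1). This is the main step. Given $U_1,\dots,U_m,V_1,\dots,V_m$, I will reduce to a single self-hitting set. Using transitivity, choose $g_j\in N_G(U_j,V_1)$ and shrink to a non-empty open $U_j'=U_j\cap g_j^{-1}V_1$ with $g_jU_j'\subseteq V_1$. Doing this inductively, I aim to find one non-empty open $W$ and elements $a_j,b_j$ with $b_jW\subseteq U_j$ and $a_jW\subseteq V_j$.

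To build $W$, start with $W_0=U_1$ and refine step by step. At each stage use transitivity to pick $s\in N_G(W_{k},U_{k})$, and replace $W_k$ by $W_k\cap s^{-1}U_k$, which is non-empty and open. Do the same for the $V_j$. All translating elements are recorded, and $W$ is the final non-empty open set.

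With $W$ in hand, $N_G(b_jW,a_jV_j')\supseteq N_G(b_jW,a_jW)=a_jN_G(W,W)b_j^{-1}$. So it suffices to find $c\in N_G(W,W)$ with $a_jcb_j^{-1}\in N_G(W,W)$ for all $j$; such a $c'=a_jcb_j^{-1}$ then lies in $N_G(U_j,V_j)$. To arrange this for every $j$ at once, apply bitranslation thickness of $N_G(W,W)$ with $H=\{a_j\}\cup\{b_j^{-1}\}\cup\{e\}$. This yields $c$ with $HcH\subseteq N_G(W,W)$, and in particular $a_jcb_j^{-1}\in N_G(W,W)$ for all $j$.

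The delicate point is to keep the translations straight. We need $a_jcb_j^{-1}W\cap W\neq\varnothing$ to give $cb_j^{-1}$-type elements of $N_G(U_j,V_j)$, so each $b_j$ must be applied to $W$ landing inside $U_j$, and each $a_j$ to $W$ landing inside $V_j$. Then
\[
N_G(U_j,V_j)\supseteq N_G(b_jW,a_jW)=a_jN_G(W,W)b_j^{-1}.
\]
The element we want is a single $g$ with $g\in a_jN_G(W,W)b_j^{-1}$ for all $j$. This differs from what $HcH$ provides directly, because the translates are not all "around" one centre. I will fix it by conjugating through a common element: seek $g=a_1cb_1^{-1}$-style centres after first arranging, by the refinement above, that $a_j=a$ and $b_j=b$ are independent of $j$. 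This can be done by choosing $W$ and then sets $W_j\subseteq W$ with $b_jW_j\subseteq U_j$. That uniformization is the step I expect to be the real obstacle. If it fails, the fallback is to follow the paper's hint: test finitely many rectangles and read off the common centre.
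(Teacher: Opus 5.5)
\textbf{Gap in (3)$\Rightarrow$(1).} Your implications (1)$\Rightarrow$(2) and (2)$\Rightarrow$(3) are correct and match the paper. In (3)$\Rightarrow$(1), your construction of one non-empty open $W$ with $b_jW\subseteq U_j$ and $a_jW\subseteq V_j$ is also correct, as is the inclusion $N_G(U_j,V_j)\supseteq N_G(b_jW,a_jW)=a_jN_G(W,W)b_j^{-1}$. Going directly from (3) is legitimate, since the paper's (2)$\Rightarrow$(1) uses only transitivity and bitranslation thickness of $N_G(W,W)$.

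The gap is the final step, which you leave unfinished. With $H=\{a_j\}\cup\{b_j^{-1}\}\cup\{e\}$ you obtain $a_jcb_j^{-1}\in N_G(W,W)$. That places $c$ in $a_j^{-1}N_G(W,W)b_j=N_G(b_j^{-1}W,a_j^{-1}W)$, a set unrelated to $N_G(U_j,V_j)$. The elements $a_jcb_j^{-1}$ also depend on $j$, so they do not give a common element either. Your proposed repair, making $a_j=a$ and $b_j=b$ independent of $j$, is not carried out and cannot work with a single $W$. Indeed, $bW\subseteq U_j$ for all $j$ forces $\bigcap_jU_j\neq\varnothing$, which fails as soon as two of the $U_j$ are disjoint. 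The claim that the required element ``differs from what $HcH$ provides directly'' is a misdiagnosis.

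\textbf{The fix.} No uniformization is needed: the centre of $HcH$ is itself the common element, provided you orient $H$ correctly. An element $g$ lies in $\bigcap_j a_jN_G(W,W)b_j^{-1}$ if and only if $a_j^{-1}gb_j\in N_G(W,W)$ for every $j$. So apply bitranslation thickness with $H=\{a_j^{-1},b_j:1\le j\le m\}$ to get $c$ with $HcH\subseteq N_G(W,W)$. Then $a_j^{-1}cb_j\in N_G(W,W)$, hence
\[
c\in a_jN_G(W,W)b_j^{-1}=N_G(b_jW,a_jW)\subseteq N_G(U_j,V_j)
\]
for every $j$. Thus $c\prod_jU_j\cap\prod_jV_j\neq\varnothing$. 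This is exactly how the paper concludes; it normalizes $a_i^{-1}W\subseteq V_i$ instead, so its test set is $H=\{a_i,b_i\}$.
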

\begin{proof}
(1)$\implies$(2).
Since $X^2$ is topologically transitive, $X$ is topologically transitive. 
Fix non-empty open $U,V\subseteq X$ and a finite non-empty $H\subseteq G$.
The transitivity of $X^{|H|^2}$ produces $c\in G$ such that
\[
c(bU)\cap a^{-1}V\neq\varnothing\qquad(a\in H,\ b\in H).
\]
This shows that $acb\in N_G(U,V)$ for all $a,b\in H$.
Hence $HcH\subseteq N_G(U,V)$.
For $H=\varnothing$, the required containment is automatic.

(2)$\implies$(3). This implication is immediate.

(3)$\implies$(2).
Fix non-empty open $U,V\subseteq X$. 
By transitivity choose $g\in G$ with $gU\cap V\neq\varnothing$, and choose non-empty open $W\subseteq U\cap g^{-1}V$. 
Then
\begin{equation}
  gN_G(W,W)\subseteq N_G(U,V).  \label{eq:W_W_U_V}
\end{equation}
For finite $H\subseteq G$, bitranslation thickness of $N_G(W,W)$ produces $c$ with $(g^{-1}H)cH\subseteq N_G(W,W)$. 
Multiplying by $g$ and using \eqref{eq:W_W_U_V}, $HcH\subseteq N_G(U,V)$. 
Thus $N_G(U,V)$ is bitranslation thick.

(2)$\implies$(1).
Fix $m\ge2$ and basic open rectangles $\mathcal U=\prod_{i=1}^mU_i$ and $\mathcal V=\prod_{i=1}^mV_i$ in $X^m$. 
By (2), $X$ is transitive.
Construct a non-empty open $W_0$ and $b_i\in G$ with $b_iW_0\subseteq U_i$ as follows. 
Start with non-empty open $O_0$. Inductively choose $b_i$ with $b_iO_{i-1}\cap U_i\neq\varnothing$, 
then choose non-empty open $O_i\subseteq O_{i-1}\cap b_i^{-1}U_i$. 
Put $W_0=O_m$.
Next construct $W\subseteq W_0$ and $a_i\in G$ with $a_i^{-1}W\subseteq V_i$. 
Starting with $W_0$, choose $a_i$ so that $a_iV_i\cap W_{i-1}\neq\varnothing$, 
and then choose non-empty open $W_i\subseteq W_{i-1}\cap a_iV_i$. Put $W=W_m$.
By (2), $S=N_G(W,W)$ is bitranslation thick.
Our target is a common element of $\bigcap_{i=1}^mN_G(U_i,V_i)$. Apply bitranslation thickness with the finite set $H=\{a_i,b_i:1\le i\le m\}$. There is $c\in G$ such that $a_icb_i\in S$ for all $i=1,2,\dotsc,m$.
Then
\[
c\in a_i^{-1}Sb_i^{-1}=N_G(b_iW,a_i^{-1}W)\subseteq N_G(U_i,V_i).
\]
Therefore $cU_i\cap V_i\neq\varnothing$ for all $i$, so $c\mathcal U\cap\mathcal V\neq\varnothing$. 
Thus $X^m$ is topologically transitive, for every $m\ge2$. 
\end{proof}

For $e\in A\subseteq G$ and finite $K\subseteq A$, define the bridge set
\begin{equation*}
\Gamma_A(K):=\{g\in G:\Int_{K\cup Kg}(A)\text{ is right syndetic}\}.
\end{equation*}
The elementary identity
\begin{equation*}
\Int_K(A)\cap g^{-1}\Int_K(A)=\Int_{K\cup Kg}(A)
\end{equation*}
implies
\begin{equation*}
\Gamma_A(K)=\{g\in G\colon \Int_K(A)\cap g^{-1}\Int_K(A)\text{ is right syndetic}\}.
\end{equation*}

\begin{lemma}\label{lem:interior_vs_product}
For $e\in A\subseteq G$, if $\Int_K(A)$ is right syndetic for every finite $K\subseteq A$,
then $\Gamma_A(K)=\Int_K(A)\Int_K(A)^{-1}$ for every finite $K\subseteq A$.
\end{lemma}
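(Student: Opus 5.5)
We prove the two inclusions directly from the description
\[
\Gamma_A(K)=\{g\in G:\Int_K(A)\cap g^{-1}\Int_K(A)\text{ is right syndetic}\},
\]
which follows from the identity $\Int_K(A)\cap g^{-1}\Int_K(A)=\Int_{K\cup Kg}(A)$ recorded just before the lemma. Write $I=\Int_K(A)$.

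\emph{Inclusion $\Gamma_A(K)\subseteq II^{-1}$.} Let $g\in\Gamma_A(K)$. Then $I\cap g^{-1}I$ is right syndetic, hence non-empty; pick $t$ in it. Then $t\in I$ and $gt\in I$, so $g=(gt)t^{-1}\in II^{-1}$. This direction needs no hypothesis beyond non-emptiness.

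\emph{Inclusion $II^{-1}\subseteq\Gamma_A(K)$.} This is where the hypothesis on $A$ enters. Let $g=ba^{-1}$ with $a,b\in I$, so that $Ka\subseteq A$ and $Kb\subseteq A$. Then $K':=Ka\cup Kb$ is a finite subset of $A$, and by hypothesis $\Int_{K'}(A)$ is right syndetic. Now use translation: if $s\in\Int_{K'}(A)$, then $Kas\subseteq A$ and $Kbs\subseteq A$, so $as\in I$ and $bs=g(as)\in I$, i.e.\ $as\in I\cap g^{-1}I$. Hence
\[
a\,\Int_{K'}(A)\subseteq I\cap g^{-1}I.
\]
A left translate of a right syndetic set is right syndetic: if $G=CL$, then $G=Ga^{-1}\cdot a=(Ca^{-1})(aL)$, and $Ca^{-1}$ is finite. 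Supersets of right syndetic sets are right syndetic, so $I\cap g^{-1}I$ is right syndetic and $g\in\Gamma_A(K)$.

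The main point to check carefully is the multiplication order: the finite set to which the hypothesis is applied must be $Ka\cup Kb$, and the resulting right syndetic set must be translated on the \emph{left} by $a$. Left translation preserves right syndeticity (as verified above), so the argument closes with no further obstacle.
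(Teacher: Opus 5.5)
Your proof is correct and follows the paper's argument essentially verbatim. The forward inclusion picks $t\in\Int_K(A)\cap g^{-1}\Int_K(A)$ and writes $g=(gt)t^{-1}$, and the reverse inclusion applies the hypothesis to $Ka\cup Kb$ to get $a\,\Int_{Ka\cup Kb}(A)\subseteq\Int_{K\cup Kg}(A)$; your explicit check that $(Ca^{-1})(aL)=CL=G$ fills in the left-translation step the paper leaves implicit.
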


\begin{proof}
Fix a finite subset $K$ of $A$ and put $B=\Int_K(A)$.

For every $g\in\Gamma_A(K)$, the set $B\cap g^{-1}B$ is right syndetic and consequently non-empty. 
If $t\in B\cap g^{-1}B$, then $t,gt\in B$, so $g=(gt)t^{-1}\in BB^{-1}$.
Thus $\Gamma_A(K)\subseteq BB^{-1}$.

Conversely, if $g=ba^{-1}\in BB^{-1}$, put $L=Ka\cup Kb\subseteq A$. 
The set $C=\Int_L(A)$ is right syndetic. 
For $h\in C$, $t=ah$ satisfies $Kt\subseteq A$ and $Kgt\subseteq A$, so $aC\subseteq\Int_{K\cup Kg}(A)$.
Therefore $g\in\Gamma_A(K)$. 
Hence $BB^{-1}\subseteq\Gamma_A(K)$.
\end{proof}

\begin{proposition}\label{prop:allorders_realization}
Assume $e\in F\subset G$ and $\Gamma_F(K)$ is bitranslation thick for every finite $K\subseteq F$. 
Then there is a binary configuration $x\in2^G$ such that
\[
x(e)=1,\qquad \suppplus(x)\subseteq F,
\]
and $X=\overline{Gx}$ is minimal and weakly mixing of all finite orders.
\end{proposition}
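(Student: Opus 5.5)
We will rerun the finite-pattern recursion of Proposition~\ref{prop:binary_realization}, interleaving it with a countable list of mixing requirements. First note that the hypothesis implies the hypothesis of Proposition~\ref{prop:binary_realization}: $e\in\Gamma_F(K)$ because $\Gamma_F(K)$ is nonempty and bitranslation thick (given $H=\{e\}$ there is $c$ with $c\in\Gamma_F(K)$, and then $\Gamma_F(K)\ni c$ means $\Int_{K\cup Kc}(F)$ is right syndetic, which forces $\Int_K(F)$ right syndetic as a superset). So the minimal-point construction is available.

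By Lemma~\ref{lem:allorders_self_hitting}(3), together with Lemma~\ref{lem:self_hitting_difference}, it suffices to arrange that for every cylinder $U[p_i]$ at $x$ (these cylinders, translated, form a basis of $X$, and every nonempty open set of $X$ contains some $gU[p_i]\cap X$) the set $N_G(x,U[p_i])N_G(x,U[p_i])^{-1}=\Occ_x(p_i)\Occ_x(p_i)^{-1}$ is bitranslation thick. Transitivity is automatic since $x$ is a transitive point of its orbit closure, and bitranslation thickness is translation invariant, so cylinders centred at $x$ suffice. Enumerate all pairs $(i,H)$ with $H\subseteq G$ finite, each occurring infinitely often (or at least once) in a schedule. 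At stage $r+1$, handling the pair $(i,H)$ with $i\le r$, we want to plant inside $D_{r+1}$, before the level-by-level filling, a configuration forcing some $c$ with $HcH\subseteq\Occ(p_i)\Occ(p_i)^{-1}$.

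The key step is the planting. Put $K=K_r=\suppplus(p_r)$. Bitranslation thickness of $\Gamma_F(K)$ applied to a suitably enlarged finite set $H'$ gives $c$ with $acb\in\Gamma_F(K)$ for $a,b\in H$. For each pair $(a,b)$, the set $\Int_{K\cup Kacb}(F)$ is right syndetic, so (taking intersections over the pairs via Lemma~\ref{lem:interior_vs_product}-style arguments, i.e.\ using that $\Int_{K'}(F)$ is right syndetic for the finite union $K'$ of all $K\cup Kacb$ — here I would enlarge $H'$ using Lemma~\ref{lem:bt_finite_deletion} to make the needed translates disjoint) we find centres $t_{ab}$ with both $t_{ab}$ and $acb\,t_{ab}$ admissible for exact copies of $p_r$, hence of $p_i\subseteq p_r$ by \eqref{eq:inheritance}. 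Placing copies of $p_r$ on pairwise disjoint domains $D_rt_{ab}$ and $D_r\,acb\,t_{ab}$ (disjointness arranged by choosing the centres far apart, which finite-deletion arguments permit since $c$ may be taken outside any prescribed finite set) gives $acb\in\Occ(p_i)\Occ(p_i)^{-1}$ for all $a,b\in H$. These extra blocks are then treated as members of the higher family $\mathcal H_i$, and the domain $D_{r+1}$ is enlarged (by putting all these blocks into $B_{r+1}$) so that the frame and global covering invariants of Claims 1 and 2 go through verbatim: the planted blocks are exact copies of $p_r$, so Case 1 and Case 3 of Claim 2 apply to them exactly as to the central block. Support stays in $F$ since each copy's ones lie in $K_rt\subseteq F$ for $t\in\Int_{K_r}(F)$.

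Passing to the limit with Lemma~\ref{lem:finite_covers} gives a minimal $x$ with $x(e)=1$, $\suppplus(x)\subseteq F$, and exact occurrences survive, so each required $HcH$ lies in $\Occ_x(p_i)\Occ_x(p_i)^{-1}$; Lemma~\ref{lem:allorders_self_hitting} finishes. The main obstacle is the planting step: realizing all the $|H|^2$ bridge elements $acb$ simultaneously with disjoint, $F$-admissible placements using only the single-$g$ information in $\Gamma_F(K)$; I would first try to handle pairs one at a time, using that $t_{ab}$ ranges over a right syndetic (hence infinite) set so that each new pair can be placed avoiding the finitely many earlier blocks.
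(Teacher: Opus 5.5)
Your proposal is correct in its final form and is essentially the paper's proof. The paper chooses $c_r$ with $H_rc_rH_r\subseteq\Gamma_F(K_r)\setminus D_r^{-1}D_r$ via Lemma~\ref{lem:bt_finite_deletion}; removing $D_r^{-1}D_r$, rather than spreading centres apart, is what makes $D_rt$ and $D_r\gamma t$ disjoint for the same $\gamma$. It then picks the centres $t_{r,\gamma}\in\Int_{K_r\cup K_r\gamma}(F)$ one at a time, outside the finitely many positions forbidden by earlier blocks, puts these bridge blocks into $B_{r+1}$, and reruns Claims~1 and~2 verbatim.

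Drop the parenthetical about a single right syndetic $\Int_{K'}(F)$ for the union $K'$. It need not hold, and it is not needed, since each bridge pair gets its own centre. Concluding through Lemma~\ref{lem:allorders_self_hitting}(3), instead of the paper's direct transitivity check on $X^m$, is a harmless variation.
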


\begin{proof}
Enumerate $G=\{g_1,g_2,\ldots\}$ with $g_1=e$.
For each $i\in\mathbb{N}$, let $H_i=\{g_1,g_2,\dotsc,g_i\}$.
For every finite $K\subseteq F$, the set $\Gamma_F(K)$ is non-empty.
Choose $\gamma\in\Gamma_F(K)$. 
Then $\Int_{K\cup K\gamma}(F)\subseteq\Int_K(F)$ is right syndetic;
therefore $\Int_K(F)$ is right syndetic.
We construct an increasing sequence of binary patterns. 
The buffer sets separate the domains of inserted copies, the frame and global covering invariants ensure minimality of the limit, and the bridge requirements enforce weak mixing of every finite order.

\medskip 
\noindent\textbf{Step 1: Initial construction.} 
Start with
\[
P_0=B_1=\{e\}, \quad D_1=P_0B_1
\]
and 
\[
p_1\colon D_1\to\{0,1\},\quad p_1(e)=1.
\]
Then $K_1:=\suppplus(p_1)=\{e\}\subseteq F$. 
Let $L_1=\Int_{K_1}(F)$.
Choose a finite $C_1\ni e$ with $G=C_1L_1$, 
and choose a finite symmetric $S_1$ containing $D_1\cup C_1$.

\medskip
\noindent\textbf{Step 2: Induction hypothesis.}
Assume that the finite sets $B_i,D_i,C_i,S_i$ and the patterns
$p_i\in\{0,1\}^{D_i}$ have been constructed for $1\le i\le r$.
The patterns are compatible and retain the prescribed positive support:
\[
p_{j+1}|_{D_j}=p_j \quad(1\le j<r),
\qquad
p_i(e)=1,\quad \suppplus(p_i)\subseteq F
\quad(1\le i\le r).
\]
For each $1\le i\le r$, set
\[
K_i:=\suppplus(p_i),
\qquad
L_i:=\Int_{K_i}(F).
\]
The finite set $C_i$ contains $e$ and satisfies $G=C_iL_i$,
while $S_i$ is finite and symmetric and contains $D_i\cup C_i$.

For each $2\le i\le r$, put
\[
P_{i-1}=S_1^5S_2^5\cdots S_{i-1}^5,
\qquad
Q_{i-1}=D_{i-1}^{-1}D_{i-1}.
\]
Assume that there is $c_{i-1}\in G$ such that
\[
H_{i-1}c_{i-1}H_{i-1}
\subseteq\Gamma_F(K_{i-1})\setminus Q_{i-1}.
\]
For each $\gamma\in H_{i-1}c_{i-1}H_{i-1}$, assume that a centre
$t_{i-1,\gamma}$ has been chosen with
\[
t_{i-1,\gamma}\in\Int_{K_{i-1}\cup K_{i-1}\gamma}(F).
\]
The domains $D_{i-1}t_{i-1,\gamma}$ and
$D_{i-1}\gamma t_{i-1,\gamma}$ are pairwise disjoint as $\gamma$ varies,
avoid $D_{i-1}$, and carry exact copies of $p_{i-1}$ in $p_i$.
Define
\[
\mathcal J_{i-1}
=\{D_{i-1}t_{i-1,\gamma},\ D_{i-1}\gamma t_{i-1,\gamma}
  :\gamma\in H_{i-1}c_{i-1}H_{i-1}\},
\]
and require
\[
B_i=D_{i-1}\cup\{g_1,\ldots,g_i\}\cup\bigcup_{J\in\mathcal J_{i-1}}J
\qquad(2\le i\le r).
\]
The domains satisfy $D_i=P_{i-1}B_i$ for $1\le i\le r$,
with $P_0$ and $B_1$ as fixed in Step~1.

For $1\le i<j\le r$, define
\[
V_i^{(j)}=S_i^5S_{i+1}^5\cdots S_{j-1}^5B_j,
\qquad
V_j^{(j)}=B_j,
\]
and define the level-$i$ frame of $D_j$ by
\[
\Fr_{i,j}:=D_j\setminus S_i^3V_{i+1}^{(j)}.
\]
Assume the frame covering invariant
\[
\Fr_{i,j}\subseteq S_i^6\Occ_{p_j}(p_i)
\qquad(1\le i<j\le r),
\]
and the global covering invariant
\[
D_j\subseteq S_i^8\Occ_{p_j}(p_i)
\qquad(1\le i\le j\le r).
\]
The inclusion $D_1\subseteq S_1^8\Occ_{p_1}(p_1)$ holds
trivially, and no frame is needed at equal indices.

\medskip
\noindent\textbf{Step 3: Inductive extension.}
Put $Q_r=D_r^{-1}D_r$. Since $Q_r$ is finite, Lemma~\ref{lem:bt_finite_deletion} and the
hypothesis imply that we may choose $c_r\in G$ such that
\[
H_rc_rH_r\subseteq \Gamma_F(K_r)\setminus Q_r.
\]
For each $\gamma\in H_rc_rH_r$, the set
$\Int_{K_r\cup K_r\gamma}(F)$ is right syndetic. Choose
$t_{r,\gamma}\in\Int_{K_r\cup K_r\gamma}(F)$ successively so that the sets
\[
D_rt_{r,\gamma},\qquad D_r\gamma t_{r,\gamma}
\]
are pairwise disjoint as $\gamma$ varies and avoid $D_r$.
Indeed, if $W$ is the union of $D_r$ and all previously selected bridge domains,
the centres forbidden by these collisions belong to the finite set $D_r^{-1}W\cup\gamma^{-1}D_r^{-1}W$.
The two domains for the same $\gamma$ are disjoint for every centre because $\gamma\notin D_r^{-1}D_r$. 
Since every right syndetic subset of the infinite group $G$ is infinite, a permitted centre exists. 
Define
\[
\mathcal J_r:=\{D_rt_{r,\gamma},\ D_r\gamma t_{r,\gamma}
       :\gamma\in H_rc_rH_r\}.
\]
Moreover, $t_{r,\gamma}\in\Int_{K_r\cup K_r\gamma}(F)$ guarantees that every positive coordinate in these copies lies in $F$.

Set
\[
P_r=S_1^5\cdots S_r^5,\qquad
B_{r+1}=D_r\cup\{g_1,\ldots,g_{r+1}\}\cup\bigcup_{J\in\mathcal J_r}J,
\qquad D_{r+1}=P_rB_{r+1}.
\]
For $1\le i\le r+1$, write $V_i=V_i^{(r+1)}$. 
Thus
\[
 V_{r+1}=B_{r+1},\qquad V_i=S_i^5V_{i+1}
 \quad(1\le i\le r),\qquad V_1=D_{r+1}.
\]

Since $e\in B_i$, we have $P_{i-1}\subseteq D_i\subseteq S_i$.
For $1\le i\le r$, the product geometry implies
\[
 V_i=S_i^5V_{i+1}\subseteq D_{r+1}\subseteq S_i^6V_{i+1}.
\]
Moreover, if $b\in D_{r+1}$, $a\notin D_{r+1}$, and $a\in S_i^2b$, then
$b\in\Fr_{i,r+1}$: otherwise $b\in S_i^3V_{i+1}$ and $a\in S_i^5V_{i+1}=V_i\subseteq D_{r+1}$.

Now we define a pattern $p_{r+1}$ on $D_{r+1}$.
Keep the central copy of $p_r$ on $D_r$ unchanged, and put exact copies of $p_r$ on each piece of $\mathcal{J}_{r}$.  
Process the levels in descending order $r,r-1,\ldots,1$. 
Just before processing level $i$, let $\mathcal H_i$ be the family consisting of the central domain $D_r$, 
every bridge domain in $\mathcal{J}_r$, and all external domains $D_kt$ with $k>i$ already  placed at this stage. 
The external level-$k$ domains will be required to lie in $V_k$. 
Since $D_r\subseteq B_{r+1}$ and $V_k\subseteq V_{i+1}$ for $k>i$, we have
\begin{equation}\label{eq:wm_higher_core}
 \bigcup\mathcal H_i\subseteq V_{i+1}.
\end{equation}

Define the admissible centres at level $i$ by
\begin{equation*}
 \Lambda_i=
 \{\ell\in L_i:D_i\ell\subseteq V_i,
             \ D_i\ell\cap H=\varnothing
                       \text{ for every }H\in\mathcal H_i\}.
\end{equation*}
This set is finite, because $e\in D_i$ implies $\Lambda_i\subseteq V_i$. 
Choose an inclusion-maximal subset $M_i\subseteq\Lambda_i$ for which the sets $D_it$, $t\in M_i$, are pairwise disjoint. 
Put exact copies of $p_i$ on each piece $D_it$, $t\in M_i$.  
After processing level $1$, assign $0$ to every remaining point of $D_{r+1}$.
The resulting $p_{r+1}$ extends $p_r$,  $\suppplus(p_{r+1})\subseteq F$ and $p_{r+1}(e)=1$.

Every lower-level assignment avoids the previously assigned domains.
Consequently each inserted copy remains exact, and $M_i\subseteq\Occ_{p_{r+1}}(p_i)$.
Occurrences inside higher blocks are preserved by the inheritance identity \eqref{eq:inheritance}.

\medskip 
\noindent\textbf{Claim 1:} $\Fr_{i,r+1}\subseteq S_i^6\Occ_{p_{r+1}}(p_i)$ for  $1\le i \le r$.

\begin{proof}[Proof of Claim 1]
Fix $1\le i\le r$ and $a\in\Fr_{i,r+1}$. 
Since $D_{r+1}\subseteq S_i^6V_{i+1}$ and $\Fr_{i,r+1}=D_{r+1}\setminus S_i^3V_{i+1}$, we have
\[
 a\in S_i^6V_{i+1}\setminus S_i^3V_{i+1}.
\]
Apply Lemma~\ref{lem:buffer} with $R=6$ to $S_i,D_i,C_i,L_i$ and $V_{i+1}$.
The buffer lemma produces $\ell\in L_i$ such that
\[
 D_i\ell\subseteq S_i^5V_{i+1}\setminus V_{i+1} =V_i\setminus V_{i+1},
 \qquad a\in S_i^4\ell.
\]
By \eqref{eq:wm_higher_core}, this domain avoids every higher block, so $\ell\in\Lambda_i$. 
Lemma~\ref{lem:packing} shows $\ell\in S_i^2t$ for some $t\in M_i$. 
Hence
\[
 a\in S_i^4S_i^2t=S_i^6t
   \subseteq S_i^6\Occ_{p_{r+1}}(p_i).
\]
This proves $\Fr_{i,r+1}\subseteq S_i^6\Occ_{p_{r+1}}(p_i)$.
\end{proof}

\medskip 
\noindent\textbf{Claim 2:} $D_{r+1}\subseteq S_i^8\Occ_{p_{r+1}}(p_i)$ for $1\le i \le r$.

\begin{proof}[Proof of Claim 2]
Fix $i\le r$ and $a\in D_{r+1}$. We distinguish four cases.

\emph{Case 1: $a$ belongs to a higher block.}
Write $a=bt\in D_kt\in\mathcal H_i$. A recurrence block has $k>i$, while each bridge block has $k=r$;
for the central block take $k=r$ and $t=e$, which also permits $k=i=r$. 
The induction hypothesis in Step~2 states that $D_k\subseteq S_i^8\Occ_{p_k}(p_i)$, so there exists $u\in\Occ_{p_k}(p_i)$ with $b\in S_i^8u$.
Since $D_kt$ carries an exact copy of $p_k$, the centre $ut$ is in $\Occ_{p_{r+1}}(p_i)$. 
Therefore
\[
 a\in S_i^8(ut).
\]

For the remaining cases, assume $a\notin\bigcup\mathcal H_i$.
Use $G=C_iL_i$ to write
\begin{equation}\label{eq:wm_candidate_centre}
 a=c\ell,\qquad c\in C_i\subseteq S_i,\quad\ell\in L_i.
\end{equation}
Every $y=d\ell\in D_i\ell$ satisfies
\begin{equation}\label{eq:wm_candidate_distance}
 ay^{-1}=cd^{-1}\in S_i^2,
 \qquad ya^{-1}=dc^{-1}\in S_i^2.
\end{equation}

\emph{Case 2: $D_i\ell$ is not contained in $V_i$.}
Choose $y\in D_i\ell\setminus V_i$. 
If $a\in S_i^3V_{i+1}$, then \eqref{eq:wm_candidate_distance} would show $y\in S_i^5V_{i+1}=V_i$, a contradiction. 
Consequently $a\in\Fr_{i,r+1}$. 
The frame covering already proved produces
\[
 a\in S_i^6\Occ_{p_{r+1}}(p_i)
   \subseteq S_i^8\Occ_{p_{r+1}}(p_i).
\]

For the two remaining cases, assume $D_i\ell\subseteq V_i$.

\emph{Case 3: $D_i\ell$ meets a higher block $D_kt$.}
Choose
\[
 y=bt\in D_i\ell\cap D_kt,\qquad b\in D_k.
\]
Since $a$ lies outside every higher block, $at^{-1}\notin D_k$.
On the other hand, \eqref{eq:wm_candidate_distance} shows
\begin{equation*}
 (at^{-1})b^{-1}=ay^{-1}\in S_i^2.
\end{equation*}
If $k>i$ and $b\notin\Fr_{i,k}$, then $b\in S_i^3V_{i+1}^{(k)}$ and $at^{-1}\in S_i^5V_{i+1}^{(k)}=V_i^{(k)}\subseteq D_k$, a contradiction. Thus $b\in\Fr_{i,k}$. 
Since $\Fr_{i,k}\subseteq S_i^6\Occ_{p_k}(p_i)$ by Step~2, choose
\[
 u\in\Occ_{p_k}(p_i),\qquad b\in S_i^6u.
\]
The inherited centre $ut$ belongs to $\Occ_{p_{r+1}}(p_i)$, and
\begin{equation*}
 a\in S_i^2y=S_i^2bt
   \subseteq S_i^2S_i^6ut=S_i^8ut.
\end{equation*}

The possibility $k=i$ occurs only when $i=r$ and the block is central or a bridge block. 
In that case $b\in D_i\subseteq S_i$ and \eqref{eq:wm_candidate_distance} produces $a\in S_i^2bt\subseteq S_i^3t$.
The centre $t\in\Occ_{p_{r+1}}(p_i)$ proves the desired covering. This case does not require a frame with equal indices.

\emph{Case 4: $D_i\ell$ is contained in $V_i$ and avoids every higher block.}
Then $\ell\in\Lambda_i$. 
By Lemma~\ref{lem:packing}, some $t\in M_i$ satisfies $\ell\in S_i^2t$. 
Using \eqref{eq:wm_candidate_centre} and the exact copy on $D_it$, we get
\[
 a\in S_i^3t\subseteq S_i^8\Occ_{p_{r+1}}(p_i).
\]

These cases exhaust $D_{r+1}$. 
Hence $D_{r+1}\subseteq S_i^8\Occ_{p_{r+1}}(p_i)$  holds for each $1\leq i\le r$.
\end{proof}

It remains to define the data needed for the next induction step.  
Since $p_{r+1}(e)=1$ and $\suppplus(p_{r+1})\subseteq F$, put
\[
 K_{r+1}:=\suppplus(p_{r+1}),
 \qquad L_{r+1}:=\Int_{K_{r+1}}(F).
\]
By the observation at the beginning of the proof, $L_{r+1}$ is right syndetic. 
Choose a finite set $C_{r+1}\ni e$ with $G=C_{r+1}L_{r+1}$.  
Choose a finite symmetric set $S_{r+1}\ni e$ such that
\[
D_{r+1}\cup C_{r+1}\subseteq S_{r+1}.
\]
Then $D_{r+1}\subseteq S_{r+1}^8\Occ_{p_{r+1}}(p_{r+1})$, because $D_{r+1}\subseteq S_{r+1}$ and
$e\in\Occ_{p_{r+1}}(p_{r+1})$.  
Thus all data required in the induction hypothesis are constructed at level $r+1$.
This completes the induction.

\medskip
\noindent\textbf{Step 4: Passage to the limit and verification of the conclusion}
Since $g_r\in B_r\subseteq D_r$ for every $r$, the increasing domains $D_r$ exhaust $G$. 
The compatibility relations $p_{r+1}|_{D_r}=p_r$ define a unique configuration $x\in2^G$ with $x|_{D_r}=p_r$.
Applying Lemma~\ref{lem:finite_covers} with the fixed finite sets $Q_i=S_i^8$  shows that $x$ is minimal, 
$x(e)=1$, and $\suppplus(x)\subseteq F$.
Put $X=\overline{Gx}$; then $X$ is minimal.

For each $k$, let $W_k=X\cap U[p_k]$. 
These clopen sets form a decreasing neighbourhood basis at $x$. 
Fix $r\ge k$ and $\gamma\in H_rc_rH_r$.
The two bridge copies installed at stage $r$ are preserved in $x$, so
$t_{r,\gamma}x\in W_k$ and $\gamma t_{r,\gamma}x\in W_k$.
Thus $\gamma\in N_G(W_k,W_k)$, and
\[
H_rc_rH_r\subseteq N_G(W_k,W_k).
\]
Given a finite $H\subseteq G$, choose $r\ge k$ with $H\subseteq H_r$.
It follows that $Hc_rH\subseteq N_G(W_k,W_k)$, so this self-hitting set is bitranslation thick.

Fix $m\ge2$ and non-empty open rectangles $\prod_{i=1}^mU_i$ and $\prod_{i=1}^mV_i$ in $X^m$.
Since $Gx$ is dense and the $W_k$ form a neighbourhood basis at $x$,
there are a common $k$ and elements $a_i,b_i\in G$ with $a_iW_k\subseteq U_i$ and $b_iW_k\subseteq V_i$ for every $i$.
Apply bitranslation thickness to
\[
H=\{b_i^{-1}:1\le i\le m\}\cup\{a_i:1\le i\le m\}.
\]
There is $c\in G$ such that $b_i^{-1}ca_i\in N_G(W_k,W_k)$ for every $i$.
Consequently $c(a_iW_k)\cap b_iW_k\ne\varnothing$, and hence $cU_i\cap V_i\ne\varnothing$ for every $i$.
Thus the diagonal action on $X^m$ is topologically transitive for every $m\ge2$, so $X$ is weakly mixing of all finite orders.
Finally, the identity cylinder $U_0=\{y\in X:y(e)=1\}$ is a clopen neighbourhood of $x$, and the shift formula identifies
\[
N_G(x,U_0)=\suppplus(x)\subseteq F.\qedhere
\]
\end{proof}

Now we are ready to prove Theorem~\ref{thm:wm_allorders}.
\begin{proof}[Proof of 
Theorem~\ref{thm:wm_allorders}]
Let $F\subseteq G$ with $e\in F$.

(1)$\implies$(2).
Let $A=N_G(x,U)\subseteq F$, where $X$ is minimal and weakly mixing of all finite orders. 
For finite $K\subseteq A$, put
\[
V_K=\bigcap_{k\in K}k^{-1}U.
\]
Then $V_K$ is a non-empty open neighbourhood of $x$, and
\[
\Int_K(A)=N_G(x,V_K).
\]
For $g\in G$,
\[
\Int_{K\cup Kg}(A)
=N_G(x,V_K\cap g^{-1}V_K).
\]
Minimality implies that this return set is right syndetic exactly when $V_K\cap g^{-1}V_K\neq\varnothing$. 
Consequently
\[
\Gamma_A(K)=N_G(V_K,V_K).
\]
By Lemma~\ref{lem:allorders_self_hitting}, this set is bitranslation thick. 
Lemma~\ref{lem:interior_vs_product} consequently shows
\[
\Int_K(A)\Int_K(A)^{-1}
=\Gamma_A(K),
\]
which proves (2).

(2)$\implies$(3). 
By Lemma~\ref{lem:interior_vs_product}, the hypotheses in (2) imply
\[
\Gamma_A(K)=\Int_K(A)\Int_K(A)^{-1}
\]
for every finite $K\subseteq A$. 
These bridge sets $\Gamma_A(K)$ are bitranslation thick, so Proposition~\ref{prop:allorders_realization} applies to $A$. 
The construction produces $x\in2^G$ with $x(e)=1$, $\suppplus(x)\subseteq A\subseteq F$, 
and $X=\overline{Gx}$ minimal and weakly mixing of all finite orders. 
Since $x=\one_{\suppplus(x)}$, condition (3) follows.

(3)$\implies$(1). 
In $X=\overline{G\one_A}$, let
\[
U=\{y\in X:y(e)=1\}.
\]
Then $U$ is clopen, $\one_A\in U$, and
\[
N_G(\one_A,U)=A\subseteq F.  \qedhere
\]
\end{proof}

Applying Theorem~\ref{thm:wm_allorders}, we obtain the following further equivalent characterizations of return-time sets of minimal $G$-flows that are weakly mixing of all finite orders.

\begin{proposition}\label{prop:more_equi_wm_sets}
Let $F\subseteq G$ with $e\in F$. The following four assertions are equivalent:
\begin{enumerate}[label=\textup{(\arabic*)}]
\item There exists a minimal $G$-flow $X$ that is topologically weakly mixing of all finite orders, a point $x\in X$, and a non-empty open set $U\ni x$ such that $N_G(x,U)\subseteq F$.
\item there exists a sequence of decreasing right syndetic sets
\[
F_1\supseteq F_2\supseteq\cdots,\qquad F_n\subseteq F,
\]
such that for every $n$ and every finite $K\subseteq F_n$, some $m\ge n$ satisfies $KF_m\subseteq F_n$, 
and $\{g\in G:F_n\cap g^{-1}F_n\text{ is right syndetic}\}$ is bitranslation thick for every $n$.
\item There exists a minimal $G$-flow $(X,G)$ that is weakly mixing of all finite orders, a point $x\in X$, 
and a non-empty open set $U\subseteq X$ with $x\in\overline U$ such that $N_G(x,U)\cup\{e\}\subseteq F$.
\item $F$ belongs to a syndetic idempotent filter $\mathscr{F}$ such that for every $B\in\mathscr{F}$,
\[
\{g\in G\colon B\cap g^{-1} B\text{ is right syndetic}\}  
\]
is bitranslation thick.
\end{enumerate}
\end{proposition}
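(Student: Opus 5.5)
The plan is to mirror the proof of Proposition~\ref{prop:more_equi_dyn_syn_sets}, using Theorem~\ref{thm:wm_allorders} as the hub. Two facts do most of the work. The first is Lemma~\ref{lem:allorders_self_hitting}. The second is the identity from the proof of Theorem~\ref{thm:wm_allorders}: for a minimal flow, a point $x$, and an open $V$ with $x\in V$, the set $\{g:N_G(x,V)\cap g^{-1}N_G(x,V)\text{ is right syndetic}\}$ equals $N_G(V,V)$, because $N_G(x,V)\cap g^{-1}N_G(x,V)=N_G(x,V\cap g^{-1}V)$ and minimality makes this right syndetic exactly when $V\cap g^{-1}V\neq\varnothing$. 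I will prove (1)$\Rightarrow$(2)$\Rightarrow$(1), (1)$\Leftrightarrow$(3) and (1)$\Leftrightarrow$(4).

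(1)$\Rightarrow$(2). By Theorem~\ref{thm:wm_allorders}(3), choose a minimal binary point $z=\one_A$ with $e\in A\subseteq F$ whose orbit closure $X$ is weakly mixing of all finite orders. Take a decreasing clopen neighbourhood basis $U_n$ at $z$, with $U_1$ inside the identity cylinder, and set $F_n=N_G(z,U_n)$. The filtration property $KF_m\subseteq F_n$ follows exactly as in Proposition~\ref{prop:more_equi_dyn_syn_sets}. The bridge set of $F_n$ is $N_G(U_n,U_n)$ by the identity above, and this is bitranslation thick by Lemma~\ref{lem:allorders_self_hitting}.

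(2)$\Rightarrow$(1). This is the step needing care. Put $A=F_1\cup\{e\}$. Proposition~\ref{prop:more_equi_dyn_syn_sets} already shows that $\Int_K(A)$ is right syndetic for every finite $K\subseteq A$. For the bridge condition I would show that, for finite $K\subseteq A$, some $m$ gives $F_m\subseteq\Int_K(A)$. Then
\[
\Int_K(A)\cap g^{-1}\Int_K(A)\supseteq F_m\cap g^{-1}F_m,
\]
so $\Gamma_A(K)$ contains the bitranslation thick set attached to $F_m$. Since bitranslation thickness passes to supersets, Theorem~\ref{thm:wm_allorders}(2) holds via Lemma~\ref{lem:interior_vs_product}. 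For the containment, note that $e\in K$ causes no problem: take $m$ with $(K\setminus\{e\})F_m\subseteq F_1$ and also $m\ge 1$, so that $F_m\subseteq F_1$; this handles $e\cdot F_m$. This is the point to check most carefully.

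(1)$\Rightarrow$(3) is trivial. For (3)$\Rightarrow$(1), set $F_0=N_G(x,U)\cup\{e\}$ and argue as in Proposition~\ref{prop:more_equi_dyn_syn_sets}. For finite $K\subseteq F_0$, the set $V=U\cap\bigcap_{h\in K\setminus\{e\}}h^{-1}U$ is non-empty and open with $N_G(x,V)\subseteq\Int_K(F_0)$. Then $\Gamma_{F_0}(K)\supseteq\{g:N_G(x,V)\cap g^{-1}N_G(x,V)\text{ right syndetic}\}$. By the remark following Lemma~\ref{lem:self_hitting_difference}, this equals $N_G(V,V)$ in the minimal flow $X$, whether or not $x\in V$. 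That set is bitranslation thick by Lemma~\ref{lem:allorders_self_hitting}. Right syndeticity comes from the earlier argument, so Theorem~\ref{thm:wm_allorders}(2) applies.

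For (1)$\Rightarrow$(4), use the filter $\mathscr F_x$ of Proposition~\ref{prop:more_equi_dyn_syn_sets}. Any $B\in\mathscr F_x$ contains some $N_G(x,V)$ with $x\in V$, so the bridge set of $B$ contains $N_G(V,V)$, which is bitranslation thick. For (4)$\Rightarrow$(1), build $A=F\cap(F-\mathscr F)$ as before, so that $\Int_K(A)\in\mathscr F$. The bridge set of $B=\Int_K(A)$ is then bitranslation thick by hypothesis, and Theorem~\ref{thm:wm_allorders}(2) applies. The main obstacle is only bookkeeping: checking that each bridge set contains the relevant thick set. That reduces to the monotonicity of $\{g:B\cap g^{-1}B\text{ right syndetic}\}$ in $B$, which is clear.
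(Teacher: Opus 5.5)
Your proposal is correct and follows essentially the same route as the paper's proof. This includes the choice $A=F_1\cup\{e\}$ with $F_m\subseteq\Int_K(A)$ in $(2)\Rightarrow(1)$, the inclusion $N_G(V,V)\subseteq\Gamma_{F_0}(K)$ in $(3)\Rightarrow(1)$, and the filter $A=F\cap(F-\mathscr F)$ in $(4)\Rightarrow(1)$. One small correction: the identity $\{g: N_G(x,V)\cap g^{-1}N_G(x,V)\text{ is right syndetic}\}=N_G(V,V)$ for an arbitrary non-empty open $V$ is not the remark after Lemma~\ref{lem:self_hitting_difference} (that remark is the translation formula $aN_G(U,V)b^{-1}=N_G(bU,aV)$), but it follows directly from your opening argument via $N_G(x,V\cap g^{-1}V)$ and Lemma~\ref{lem:minimality}.
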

\begin{proof}
(1)$\implies$ (2). 
By Theorem~\ref{thm:wm_allorders}(3), choose a binary point $x$ with minimal orbit closure $X_0$ weakly mixing of all finite orders and identity cylinder $U_1=\{z\in X_0:z(e)=1\}$ such that $N_G(x,U_1)\subseteq F$. Extend $U_1$ to a decreasing clopen neighbourhood basis $U_1\supseteq U_2\supseteq\cdots$ at $x$ in $X_0$. 
Put $F_n=N_G(x,U_n)$. 
Each $F_n$ is right syndetic, and shrinking the basis produces $KF_m\subseteq F_n$. 
Also
\[
\{g\in G:F_n\cap g^{-1}F_n\text{ is right syndetic}\}=N_G(U_n,U_n),
\]
that is bitranslation thick by Lemma~\ref{lem:allorders_self_hitting}.

(2)$\implies$(1). Put $A=F_1\cup\{e\}$. 
Given finite $K\subseteq A$, choose $m$ with $F_m\subseteq F_1$ and $(K\cap F_1)F_m\subseteq F_1$. 
Then $F_m\subseteq\Int_K(A)$. If $g\in\{h\in G:F_m\cap h^{-1}F_m\text{ is right syndetic}\}$, then
\[
F_m\cap g^{-1}F_m\subseteq\Int_{K\cup Kg}(A),
\]
so $g\in\Gamma_A(K)$. 
Hence $\Gamma_A(K)$ is bitranslation thick, and Theorem~\ref{thm:wm_allorders} proves (1).

(1)$\implies$(3). This implication is immediate.

(3)$\implies$(1). Put $F_0=N_G(x,U)\cup\{e\}$. 
For finite $K\subseteq F_0$, let
\[
H=K\setminus\{e\},\qquad Q=\bigcap_{h\in H}h^{-1}U,\qquad V=U\cap Q.
\]
Then $V$ is non-empty and open. 
If $g\in N_G(V,V)$, then $W=V\cap g^{-1}V$ is non-empty and
\[
N_G(x,W)\subseteq\Int_{K\cup Kg}(F_0).
\]
The left side is right syndetic, so $g\in\Gamma_{F_0}(K)$.
Therefore
\[
N_G(V,V)\subseteq\Gamma_{F_0}(K).
\]
Lemma~\ref{lem:allorders_self_hitting} shows that the left side is bitranslation thick, so Theorem~\ref{thm:wm_allorders} proves (1).

(1)$\implies$(4). Let
\[
\mathscr F_x=\{N_G(x,V):V\text{ open and }x\in V\}^{\uparrow}.
\]
This is a syndetic filter and it is idempotent: if $R=N_G(x,V)$ and $g\in R$, then
\[
g^{-1}R=N_G(x,g^{-1}V)\in\mathscr F_x,
\]
so $R\subseteq R-\mathscr F_x$. If $R=N_G(x,V)$, then
\[
\{g\in G:R\cap g^{-1}R\text{ is right syndetic}\}=N_G(V,V),
\]
that is bitranslation thick by Lemma~\ref{lem:allorders_self_hitting}. 
Every $B\in\mathscr F_x$ contains such an $R$, consequently $\{g\in G:B\cap g^{-1}B\text{ is right syndetic}\}$ is bitranslation thick. 
Since the witnessing return set belongs to $\mathscr F_x$, $F\in\mathscr F_x$.

(4)$\implies$(1). 
Let $\mathscr F$ be as in (4), with $F\in\mathscr F$. 
Put
\[
D=F-\mathscr F,\qquad A=F\cap D.
\]
Then $e\in A$. If $g\in A$, then $g\in D=F-\mathscr F$, so $g^{-1}F\in\mathscr F$. 
Idempotence implies
\[
g^{-1}D=(g^{-1}F)-\mathscr F\in\mathscr F,
\]
and consequently $g^{-1}A=g^{-1}F\cap g^{-1}D\in\mathscr F$. 
Consequently, for every finite $K\subseteq A$,
\[
\Int_K(A)=\bigcap_{k\in K}k^{-1}A\in\mathscr F.
\]
Thus $\Int_K(A)$ is right syndetic. 
Moreover,
\[
\Gamma_A(K)=\{g\in G:\Int_K(A)\cap g^{-1}\Int_K(A)\text{ is right syndetic}\}
\]
is bitranslation thick by the defining property of $\mathscr F$. 
Lemma~\ref{lem:interior_vs_product} now shows that
\[
\Int_K(A)\Int_K(A)^{-1}
\]
is bitranslation thick.  
\end{proof}

\subsection{Return-time sets of topologically strongly mixing minimal flows}\label{subsec:strong_mixing}
The preceding subsection identifies bitranslation thickness as the bridge condition for weak mixing of all finite orders.  We now replace that condition by cofiniteness and run the corresponding construction for topologically strongly mixing minimal flows.
 
\begin{lemma}\label{lem:many_legal_centres}
Let $L\subseteq G$ be right syndetic, let $D\subseteq G$ be finite and non-empty, and let $N\ge1$ be an integer.  
There is a finite set $R\subseteq G$, with $e\in R$, such that for every finite $W\subseteq G$, all
\[
b\in G\setminus R^{-1}D^{-1}W
\]
admit $N$ distinct elements $u_1,\ldots,u_N\in L\cap Rb$ satisfying $Du_j\cap W=\varnothing$ for every $j$.
\end{lemma}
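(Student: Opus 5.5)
Since $L$ is right syndetic, I first fix a finite $C$ with $G=CL$. Every coset-like region $Cb$ then meets $L$: writing $b$ as $c\ell$ gives $c^{-1}b=\ell\in L$, so $L\cap C^{-1}b\neq\varnothing$ for every $b\in G$. To get $N$ \emph{distinct} elements, I choose $N$ distinct elements $r_1,\dots,r_N\in G$ such that the sets $C^{-1}r_j$ are pairwise disjoint. This is possible because $G$ is infinite and each constraint $C^{-1}r_j\cap C^{-1}r_{j'}\neq\varnothing$ forbids only $r_{j'}\in CC^{-1}r_j$, a finite set, so the $r_j$ can be picked greedily. I then put
\[
R=\{e\}\cup\bigcup_{j=1}^N C^{-1}r_j,
\]
which is finite and contains $e$. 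For each $b$ and each $j$, applying the observation to $r_jb$ gives $u_j\in L\cap C^{-1}r_jb\subseteq L\cap Rb$. The $u_j$ are distinct because the sets $C^{-1}r_jb$ are pairwise disjoint; right translation by $b$ preserves disjointness.

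The remaining condition is $Du_j\cap W=\varnothing$. Since $u_j\in Rb$, write $u_j=\rho b$ with $\rho\in R$. If $Du_j\cap W\neq\varnothing$, then $d\rho b=w$ for some $d\in D$ and $w\in W$, hence $b=\rho^{-1}d^{-1}w\in R^{-1}D^{-1}W$. So for $b\notin R^{-1}D^{-1}W$, every element of $Rb$, and in particular every $u_j$, satisfies $Du_j\cap W=\varnothing$.

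The proof is essentially routine. The only point needing care is obtaining distinctness independently of $b$ and $W$, with $R$ depending only on $L$, $D$ and $N$. The disjoint translates $C^{-1}r_j$ handle this. It is also worth noting that $R$ in fact does not depend on $D$ at all; the exclusion set $R^{-1}D^{-1}W$ alone absorbs both $D$ and $W$.
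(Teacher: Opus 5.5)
Your proposal is correct and follows the paper's proof essentially verbatim: you fix $G=CL$, take $N$ elements $r_j$ with pairwise disjoint $C^{-1}r_j$, set $R=\{e\}\cup\bigcup_j C^{-1}r_j$, extract $u_j\in L\cap C^{-1}r_jb$, and absorb the collision condition into the exceptional set $R^{-1}D^{-1}W$. Your explicit justification of the greedy choice of the $r_j$ and your observation that $R$ does not depend on $D$ are both accurate.
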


\begin{proof}
Choose a finite $C\subseteq G$ with $G=CL$.  
Choose distinct $r_1,\ldots,r_N\in G$ so that the finite sets $C^{-1}r_j$ are pairwise disjoint, and put
\[
R=\{e\}\cup\bigcup_{j=1}^N C^{-1}r_j.
\]
For $b\in G$, write $r_jb=c_ju_j$ with $c_j\in C$ and $u_j\in L$.
Then $u_j=c_j^{-1}r_jb\in L\cap Rb$, and the $u_j$ are distinct.
If $Du_j\cap W\ne\varnothing$, then $u_j=d^{-1}w$ for some $d\in D,w\in W$, so
$b=(c_j^{-1}r_j)^{-1}d^{-1}w\in R^{-1}D^{-1}W$.
Thus, outside the displayed finite exceptional set, all $u_j$ are legal.
\end{proof}

\begin{proposition}\label{prop:strong_mixing_realization}
Assume $e\in A\subseteq G$ and $\Gamma_A(K)$ is cofinite for every finite $K\subseteq A$.  
Then there is $x\in2^G$ with
\[
x(e)=1,\qquad \suppplus(x)\subseteq A,
\]
such that $X=\overline{Gx}$ is minimal and topologically strongly mixing.
\end{proposition}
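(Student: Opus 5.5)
The plan is to run the construction of Proposition~\ref{prop:allorders_realization} again, changing only the bridge step. The frame and global covering invariants, the buffer sets, the descending-level packing, and Claims~1 and~2 carry over unchanged. As before, cofiniteness makes each $\Gamma_A(K)$ non-empty, so $\Int_K(A)$ is right syndetic for every finite $K\subseteq A$ and the recurrence part of the recursion works as in Proposition~\ref{prop:binary_realization}.

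The change is this. Weak mixing of all orders only needed bridges $\gamma$ ranging over one set $H_rc_rH_r$. Strong mixing needs, for each fixed $k$, that $N_G(W_k,W_k)$ be cofinite, where $W_k=X\cap U[p_k]$. At stage $r$ I will therefore install bridges for every level $k\le r$ and every $\gamma$ in a growing finite window, and let the missing elements only be those that can never be reached.

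\textbf{Bridge step.} Take an exhaustion $E_r$ of $G$ by finite sets. For each $k\le r$ and each $\gamma\in(E_r\setminus E_{r-1})\cap\Gamma_A(K_k)\setminus D_k^{-1}D_k$, place two exact copies of $p_k$, on $D_kt$ and $D_k\gamma t$, with
\[
t\in\Int_{K_k\cup K_k\gamma}(A).
\]
These domains must be disjoint from each other and from $D_r$. This uses only finitely many avoidance constraints, so a centre can be chosen as in Step~3 of Proposition~\ref{prop:allorders_realization}; Lemma~\ref{lem:many_legal_centres} is available if many simultaneous legal centres are needed. All new positive coordinates lie in $A$.

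The key point is that each $\gamma$ is handled once, at the first stage $r$ with $\gamma\in E_r$, for every level $k\le r$. So, for fixed $k$, every $\gamma\notin E_{k-1}\cup D_k^{-1}D_k\cup(G\setminus\Gamma_A(K_k))$ receives a level-$k$ bridge. This excluded set is finite. The bridges are included in $B_{r+1}$ and counted among the higher blocks $\mathcal H_i$. Their block levels are $k\le r$, so in Claim~2, Case~1 I need the global covering $D_k\subseteq S_i^8\Occ_{p_k}(p_i)$ for the relevant $i\le k$. For $i>k$, the point $a$ in such a block is handled through the packing cases, since those blocks are simply avoided. \textbf{This bookkeeping is the main obstacle.} To sidestep it, the simplest fix is to upgrade every level-$k$ bridge to a copy of $p_r$: because $\suppplus(p_r)\supseteq K_k$, I would use $\gamma\in\Gamma_A(K_r)$ directly. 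Then all bridge blocks are level $r$, exactly as in the previous proof.

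With this upgrade, the condition becomes: for every $\gamma\in E_r$ outside the finite set $E_{r-1}$, and with $\gamma\in\Gamma_A(K_r)\setminus D_r^{-1}D_r$, install a copy pair of $p_r$ at stage $r$. Since $p_r$ extends $p_k$, both copies give returns to $W_k$ for all $k\le r$.

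\textbf{Complement of $N_G(W_k,W_k)$.} An element $\gamma$ can fail to be bridged at stage $r\ge k$ only if $\gamma\in E_{r-1}$, or $\gamma\notin\Gamma_A(K_r)$, or $\gamma\in D_r^{-1}D_r$. The first and last conditions are harmless for a fixed $\gamma$ only if we allow skipping, so I would handle skipped $\gamma$ at later stages: I install $\gamma$ at the first stage $r\ge k$ where $\gamma\in E_r\cap\Gamma_A(K_r)\setminus D_r^{-1}D_r$. The real danger is that $D_r^{-1}D_r$ eventually swallows $\gamma$.

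To prevent this, I would install at stage $r$ all $\gamma\in E_r\cap\Gamma_A(K_r)$. If $\gamma\in D_r^{-1}D_r$, I choose the two centres as separate $t,t'$ with $t'=\gamma t$ and $t\in\Int_{K_r\cup K_r\gamma}(A)$; then the two domains $D_rt$ and $D_r\gamma t$ may overlap but must agree. Overlap consistency fails in general, so instead I exploit that $\gamma\in E_k$ were bridged already at stage $k$ via copies of $p_k$. The cleanest approach is to bridge, at stage $r$, all $\gamma\in E_r\cap\Gamma_A(K_r)$ using copies of $p_k$ for the single level $k$ with $\gamma\in E_k\setminus E_{k-1}$, but placed within $V_k$-level packing. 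This brings back the mixed-level Case~1, which Claim~2's induction hypothesis $D_k\subseteq S_i^8\Occ_{p_k}(p_i)$ covers for $i\le k$; for $i>k$ it is handled like an ordinary level-$k$ external block, as in the proof of Proposition~\ref{prop:binary_realization}.

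\textbf{Conclusion.} This yields $N_G(W_k,W_k)\supseteq\Gamma_A(K_k)\setminus(E_{k-1}\cup D_k^{-1}D_k)$, which is cofinite. For general non-empty open $U,V$, pick $W_k$ and $a,b$ with $aW_k\subseteq U$ and $bW_k\subseteq V$. Then
\[
N_G(U,V)\supseteq bN_G(W_k,W_k)a^{-1},
\]
which is cofinite, so $X$ is topologically strongly mixing. Minimality, $x(e)=1$ and $\suppplus(x)\subseteq A$ follow from Lemma~\ref{lem:finite_covers}, exactly as before.
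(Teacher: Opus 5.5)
There is a genuine gap. It is the one you flag yourself as the main obstacle, and none of your three attempts closes it. Your first bridging scheme, a level-$k$ pair for every $k\le r$ and every new $\gamma$, is fine for the mixing conclusion; it is equivalent to the paper's enumeration of pairs $(k_r,\gamma_r)$. The trouble is that a level-$k$ bridge pair installed at stage $r>k$ must be fixed before the descending packing: its centre $t\in\Int_{K_k\cup K_k\gamma}(A)$ is chosen far from $D_r$ and the pair goes into $B_{r+1}$. So it belongs to $\mathcal H_i$ for every $i\le r$.

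Now take $k<i\le r$ and a point $a$ in or next to such a block, and write $a=c\ell$ with $\ell\in L_i$. If $D_i\ell$ meets the bridge block, then $\ell\notin\Lambda_i$, so maximality of the level-$i$ packing gives nothing. The old global and frame invariants only cover blocks carrying $p_j$ with $j\ge i$. Nothing yields $a\in S_i^8\Occ_{p_{r+1}}(p_i)$, so Claim~2 fails, and with it the fixed sets $Q_i=S_i^8$ required by Lemma~\ref{lem:finite_covers}.

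Your statement that these blocks are handled like ordinary level-$k$ external blocks is incorrect. Ordinary level-$k$ blocks are inserted only after level $i>k$ has been packed, so they never obstruct it; the bridges obstruct every level. Moving the bridges into the level-$k$ pass does not help either. Two cores at arbitrary separation $\gamma$, with centre in a prescribed syndetic set, need not fit into the part of $V_k^{(r+1)}$ left free by higher blocks.

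Your sidesteps also fail:
\begin{itemize}
\item Upgrading to copies of $p_r$ needs $\gamma\notin D_r^{-1}D_r\supseteq D_r^{-1}$, and $D_r^{-1}$ exhausts $G$. Meanwhile $G\setminus\Gamma_A(K_r)$ grows with $r$. So a fixed $\gamma$ may never be bridged.
\item The birth-level variant has the same defect with $D_k^{-1}D_k$.
\item Your closing inclusion $N_G(W_k,W_k)\supseteq\Gamma_A(K_k)\setminus(E_{k-1}\cup D_k^{-1}D_k)$ in any case needs a level-$k$ bridge for every such $\gamma$, i.e.\ precisely the mixed-level scheme.
\end{itemize}

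The missing idea is the paper's \emph{guard blocks}. Together with the cores $D_kb_\nu$ ($b_0=t$, $b_1=\gamma t$), place for each $i=k+1,\dots,r$ an exact copy of $p_i$ on $D_iu_i^{(\nu)}$, with $u_i^{(\nu)}\in L_i\cap R_ib_\nu$. These centres are chosen by Lemma~\ref{lem:many_legal_centres} with $N_i=|D_i|M_i+1$ candidates, so that all guard domains are pairwise disjoint and miss $D_r$ and the cores. For this, $t$ is taken outside the finite sets $C_i^{-1}D_i^{-1}D_r$ and $\gamma^{-1}C_i^{-1}D_i^{-1}D_r$. Enlarging $C_i$ to contain $R_i$, $P_{i-1}$ and $C_1,\dots,C_{i-1}$ gives $D_mu_m^{(\nu)}(u_i^{(\nu)})^{-1}\subseteq S_i^3$ for $m<i$. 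Hence every point of a fixed lower-level bridge or guard block lies in $S_i^3u_i^{(\nu)}$, and a candidate $D_i\ell$ meeting such a block gives $a\in S_i^5u_i^{(\nu)}$. This is exactly the case your Claim~2 cannot handle. With it, your first scheme becomes essentially the paper's proof.
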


\begin{proof}
Enumerate $G=\{g_1,g_2,\ldots\}$, with $g_1=e$, 
and enumerate all pairs $(k_r,\gamma_r)\in\mathbb N\times G$, with $k_r\le r$, so that every pair occurs. 
For finite $K\subseteq A$, write
\[
L_K:=\Int_K(A).
\]
The set $L_K$ is right syndetic: choose $g\in\Gamma_A(K)$; then $\Int_{K\cup Kg}(A)\subseteq L_K$ is right syndetic.

We construct by induction finite domains $D_i$, compatible binary patterns
\[
p_i\in\{0,1\}^{D_i},
\]
and the auxiliary data needed for the mixing, separation, and covering arguments. At each stage, the construction preserves the previously defined binary pattern, inserts the required mixing bridge blocks together with the corresponding guard blocks, and maintains the frame and global covering invariants used in the passage to the limit.

\medskip 
\noindent\textbf{Step 1: Initial construction.} 
Put $P_0=B_1=D_1=\{e\}$, $p_1(e)=1$, $K_1=\suppplus(p_1)$, and $L_1=L_{K_1}$. 
Choose finite $C_1^0\ni e$ with $G=C_1^0L_1$. 
Put
\[
M_1=2|D_1|,\qquad N_1=|D_1|M_1+1.
\]
Choose $R_1$ from Lemma~\ref{lem:many_legal_centres} for $L_1,D_1,N_1$. 
Enlarge $C_1^0$ to a finite $C_1\ni e$ containing $C_1^0\cup R_1\cup P_0$, and choose $S_1=S_1^{-1}$ with $D_1\cup C_1\subseteq S_1$.

\medskip
\noindent\textbf{Step 2: Induction hypothesis.}
Assume $B_i,D_i,p_i,C_i,R_i,S_i$ have been constructed for $1\le i\le r$, with $p_{i+1}|_{D_i}=p_i$, $p_i(e)=1$, and $\suppplus(p_i)\subseteq A$. 
Put
\[
P_{i-1}=S_1^5\cdots S_{i-1}^5,\quad D_i=P_{i-1}B_i,\quad
K_i=\suppplus(p_i),\quad  L_i=L_{K_i}.
\]
At level $i$, first choose a finite $C_i^0\ni e$ with $G=C_i^0L_i$. 
Put
\[
M_i=2\sum_{m\le i}|D_m|,\qquad N_i=|D_i|M_i+1,
\]
and choose $R_i$ from Lemma~\ref{lem:many_legal_centres} for $L_i,D_i,N_i$. 
Enlarge $C_i^0$ to an increasing finite $C_i\ni e$ containing
\[
C_i^0\cup R_i\cup P_{i-1}\cup C_1\cup\cdots\cup C_{i-1}.
\]
Then $G=C_iL_i$, $R_i\subseteq C_i$, and $P_{i-1}\subseteq C_i$. 
Choose a finite symmetric $S_i=S_i^{-1}$ with
\[
D_i\cup C_i\subseteq S_i.
\]
For $i<j$, define
\[
V_i^{(j)}=S_i^5\cdots S_{j-1}^5B_j,\qquad
V_j^{(j)}=B_j,\qquad
\mathsf{Fr}_{i,j}=D_j\setminus S_i^3V_{i+1}^{(j)}.
\]
Maintain the frame and global covering invariants in \eqref{eq:strong_mixing_covering_invariants}:
\begin{equation}
\mathsf{Fr}_{i,j}\subseteq S_i^6\Occ_{p_j}(p_i),\qquad
D_j\subseteq S_i^8\Occ_{p_j}(p_i)\label{eq:strong_mixing_covering_invariants}
\end{equation}
for $i<j$, together with $D_i\subseteq S_i^8\Occ_{p_i}(p_i)$. 
Since $e\in B_i$, $P_{i-1}\subseteq D_i$, consequently $P_{i-1}\subseteq C_i\subseteq S_i$. 
Thus
\[
V_i^{(j)}\subseteq D_j\subseteq S_i^6V_{i+1}^{(j)}.
\]
Consequently,
\begin{equation}
b\in D_j,\quad a\notin D_j,\quad a\in S_i^2b
\quad\Longrightarrow\quad b\in\mathsf{Fr}_{i,j}.\label{eq:strong_mixing_frame_boundary}
\end{equation}

\medskip
\noindent\textbf{Step 3: Inductive extension.}
Let $(k,\gamma)=(k_r,\gamma_r)$ and $Q_k=D_k^{-1}D_k$. 
If $\gamma\notin\Gamma_A(K_k)\setminus Q_k$, set $\mathcal J_r=\varnothing$. 
Otherwise let
\[
L_{k,\gamma}=\Int_{K_k\cup K_k\gamma}(A).
\]
Put $W_0=D_r$ and, for $k<i\le r$, define
\[
E_i(W_0)=C_i^{-1}D_i^{-1}W_0.
\]
Choose $t\in L_{k,\gamma}$ outside the finite set
\[
\bigcup_{i=k+1}^{r}\bigl(E_i(W_0)\cup\gamma^{-1}E_i(W_0)\bigr)
\cup D_k^{-1}D_r\cup\gamma^{-1}D_k^{-1}D_r.
\]
Then $D_kt$ and $D_k\gamma t$ avoid $D_r$ and are disjoint. 
Put
\[
u_k^{(0)}=t,\qquad u_k^{(1)}=\gamma t.
\]
For $i=k+1,\ldots,r$, apply Lemma~\ref{lem:many_legal_centres} with $W=W_0$ to the bases $t$ and $\gamma t$. 
Since $R_i\subseteq C_i$, the exceptional sets $E_i(W_0)$ suffice. 
We obtain $N_i$ legal candidates on each side. 
Select $u_i^{(0)},u_i^{(1)}$ successively so that all $D_i u_i^{(\nu)}$ are pairwise disjoint and avoid $D_r$ and the two core domains. 
At most $|D_i|M_i=N_i-1$ candidates are forbidden at each choice. 
Put exact copies of $p_k$ on the two core domains and exact copies of $p_i$ on the guard domains; call the family $\mathcal J_r$.

For $m<i$, write $u_m^{(\nu)}=\alpha_m b$ and $u_i^{(\nu)}=\alpha_i b$, where $b=t$ or $b=\gamma t$, $\alpha_m\in C_m$, 
and $\alpha_i\in C_i$; at the core level take $\alpha_k=e$. 
Since $D_m\subseteq S_m\subseteq P_{i-1}\subseteq C_i$ for $m<i$, and $C_m,C_i\subseteq S_i$, then
\begin{equation}
D_m u_m^{(\nu)}(u_i^{(\nu)})^{-1}
\subseteq D_mC_mC_i^{-1}\subseteq S_i^3.\label{eq:strong_mixing_guard_distance}
\end{equation}

Set
\[
P_r=S_1^5\cdots S_r^5,\quad
B_{r+1}=D_r\cup\{g_1,\ldots,g_{r+1}\}\cup\bigcup_{J\in\mathcal J_r}J,\quad
D_{r+1}=P_rB_{r+1}.
\]
Write $V_i=V_i^{(r+1)}$. 
The product geometry is recorded in \eqref{eq:strong_mixing_product_geometry}:
\begin{equation}
V_i=S_i^5V_{i+1},\qquad
V_i\subseteq D_{r+1}\subseteq S_i^6V_{i+1}\quad(1\le i\le r).\label{eq:strong_mixing_product_geometry}
\end{equation}

Keep the central, bridge, and guard assignments fixed. 
Process $i=r,r-1,\ldots,1$. 
Let $\mathcal H_i$ consist of these fixed blocks and the recurrence blocks already placed at levels $>i$, and define
\[
\Lambda_i=\{\ell\in L_i:D_i\ell\subseteq V_i,\ 
D_i\ell\cap H=\varnothing\text{ for every }H\in\mathcal H_i\}.
\]
Choose inclusion-maximal $\mathcal M_i\subseteq\Lambda_i$ with pairwise disjoint $D_it$, place exact copies of $p_i$ on these domains, 
and put $0$ on all remaining sites. Since every fixed central, bridge, guard, or already placed higher-level block is contained in $V_{i+1}^{(r+1)}$, we have
\[
\bigcup\mathcal H_i\subseteq V_{i+1}^{(r+1)}.
\]
Thus a buffer domain in $V_i\setminus V_{i+1}$ avoids every member of $\mathcal H_i$. 
This produces $p_{r+1}\supseteq p_r$, with positive support in $A$, and $\mathcal M_i\subseteq\Occ_{p_{r+1}}(p_i)$. 
Occurrences inherited from higher blocks are preserved by
\[
u\in\Occ_{p_j}(p_i)\Longrightarrow
uv\in\Occ_{p_{r+1}}(p_i)
\]
when $p_j$ is copied on $D_jv$.

\medskip
\noindent\textbf{Claim 1.}
For $1\le i\le r$,
\[
\mathsf{Fr}_{i,r+1}\subseteq S_i^6\Occ_{p_{r+1}}(p_i).
\]
\begin{proof}[Proof of Claim 1]
If $a\in\mathsf{Fr}_{i,r+1}$, Lemma~\ref{lem:buffer} produces $\ell\in L_i$ with
\[
D_i\ell\subseteq V_i\setminus V_{i+1},\qquad a\in S_i^4\ell.
\]
This domain avoids $\mathcal H_i$, so $\ell\in\Lambda_i$. 
Lemma~\ref{lem:packing} produces $\ell\in S_i^2t$ for some $t\in \mathcal M_i$, and consequently $a\in S_i^6t$.
\end{proof}

\medskip
\noindent\textbf{Claim 2.}
For $1\le i\le r$,
\[
D_{r+1}\subseteq S_i^8\Occ_{p_{r+1}}(p_i).
\]

\begin{proof}[Proof of Claim 2]
First consider a point $a$ in one of the fixed central, bridge, or guard blocks, or in a recurrence block $D_jv$ already placed at a level $j>i$.
 Classify that block by the pattern level $q$ that it actually carries. 
A recurrence block $D_jv$ carries $p_j$, each bridge core block $D_ku_k^{(\nu)}$ carries $p_k$, and each guard block $D_mu_m^{(\nu)}$ carries $p_m$. 
If $q>i$, the old global invariant for $p_q$, followed by inheritance, covers the block. 
If $q=i$, its own centre is an occurrence centre for $p_i$. 
If $q<i$, the block is a fixed lower guard or bridge block, and \eqref{eq:strong_mixing_guard_distance} places it within $S_i^3u_i^{(\nu)}$, where $u_i^{(\nu)}$ is an occurrence centre for $p_i$. 
Lower recurrence blocks are not fixed at this stage and will be handled by the maximal packing below.

Now let $a$ lie outside the blocks just considered. Write $a=c\ell$, $c\in C_i\subseteq S_i$, $\ell\in L_i$. 
For $y=d\ell$, $d\in D_i$,
\begin{equation}
ay^{-1}=cd^{-1}\in S_i^2,\qquad ya^{-1}=dc^{-1}\in S_i^2.\label{eq:strong_mixing_candidate_distance}
\end{equation}
If $D_i\ell\not\subseteq V_i$, choose $y\in D_i\ell\setminus V_i$. 
If $a\in S_i^3V_{i+1}$, then \eqref{eq:strong_mixing_candidate_distance} would show $y\in S_i^5V_{i+1}=V_i$, a contradiction. 
Consequently, $a\in\mathsf{Fr}_{i,r+1}$, and Claim 1 applies. 
Suppose $D_i\ell\subseteq V_i$. 
If it meets a block $D_jv$ carrying $p_j$ with $j>i$, write $y=bv$ in the intersection, where $b\in D_j$. 
Since $a$ is outside that block, $av^{-1}\notin D_j$; \eqref{eq:strong_mixing_candidate_distance} and \eqref{eq:strong_mixing_frame_boundary} show $b\in\mathsf{Fr}_{i,j}$. 
The old frame invariant produces $u\in\Occ_{p_j}(p_i)$ with $b\in S_i^6u$, and inheritance produces $uv\in\Occ_{p_{r+1}}(p_i)$, so $a\in S_i^8uv$. 
If it meets a fixed level-$i$ central, bridge, or guard block $D_iu$, choose $y=bu$ in the intersection with $b\in D_i\subseteq S_i$. Equation~\eqref{eq:strong_mixing_candidate_distance} implies $a\in S_i^2y\subseteq S_i^3u$, and $u$ is an occurrence centre for $p_i$. This includes the central block $D_r=D_i$ when $i=r$. 
If it meets a fixed lower bridge or guard block $D_mu_m^{(\nu)}$, then \eqref{eq:strong_mixing_guard_distance} and \eqref{eq:strong_mixing_candidate_distance} show $a\in S_i^5u_i^{(\nu)}$. 
In every remaining case, $D_i\ell$ avoids the fixed blocks in $\mathcal H_i$ and the level-$i$ blocks, so $\ell\in\Lambda_i$; this includes the possibility that it meets a lower recurrence block. 
Maximality and Lemma~\ref{lem:packing} show $a\in S_i^3t$ for some $t\in \mathcal M_i$. These cases prove the claim.
\end{proof}

For the next level, set $K_{r+1}=\suppplus(p_{r+1})$, $L_{r+1}=L_{K_{r+1}}$, 
and choose $C_{r+1}^0\ni e$ with $G=C_{r+1}^0L_{r+1}$. 
Put
\[
M_{r+1}=2\sum_{m\le r+1}|D_m|,\qquad
N_{r+1}=|D_{r+1}|M_{r+1}+1,
\]
choose $R_{r+1}$ by Lemma~\ref{lem:many_legal_centres} for $L_{r+1},D_{r+1},N_{r+1}$, enlarge $C_{r+1}^0$ to a finite increasing $C_{r+1}$ containing
\[
C_{r+1}^0\cup R_{r+1}\cup P_r\cup C_1\cup\cdots\cup C_r,
\]
and choose $S_{r+1}=S_{r+1}^{-1}$ with
\[
D_{r+1}\cup C_{r+1}\subseteq S_{r+1}.
\]
The equal-index invariant follows from $D_{r+1}\subseteq S_{r+1}$ and $e\in\Occ_{p_{r+1}}(p_{r+1})$. 
Since $g_{r+1}\in B_{r+1}\subseteq D_{r+1}$, the domains are increasing and exhaust $G$. 
The new bridge and guard domains avoid $D_r$ and are contained in $D_{r+1}$; all later patterns extend $p_{r+1}$, 
so these exact copies are never overwritten. 
This completes the induction.

\medskip
\noindent\textbf{Step 4: Passage to the limit and  verification of the conclusion.}
Since $g_r\in B_r\subseteq D_r$, the increasing domains $D_r$ exhaust $G$. 
Compatibility defines $x\in2^G$. 
By Lemma~\ref{lem:finite_covers}, with $Q_i=S_i^8$, minimality, $x(e)=1$, and $\suppplus(x)\subseteq A$ follow.

Put $X=\overline{Gx}$. For each $k$, let $W_k:=X\cap U[p_k]$.
These sets form a neighbourhood basis at $x$. 
Every $\gamma\in\Gamma_A(K_k)\setminus Q_k$ is processed at some stage, and the two core copies show $\gamma\in N_G(W_k,W_k)$. 
Since the left-hand set is cofinite, $N_G(W_k,W_k)$ is cofinite. 
For non-empty open $U,V\subseteq X$, choose $a,b\in G$ and $k$ with $aW_k\subseteq U$ and $bW_k\subseteq V$. Then
\[
N_G(aW_k,bW_k)=bN_G(W_k,W_k)a^{-1}
\]
is cofinite and contained in $N_G(U,V)$.
 Consequently, $X$ is topologically strongly mixing. 
 Finally,
\[
N_G\bigl(x,\{y\in X:y(e)=1\}\bigr)
=\suppplus(x)\subseteq A.\qedhere
\]
\end{proof}

Now we are ready to prove Theorem~\ref{thm:sm_sets}.
\begin{proof}[Proof of Theorem~\ref{thm:sm_sets}]
Let $F\subseteq G$ with $e\in F$.

(1)$\implies$(2). 
Let $A=N_G(x,U)\subseteq F$, where $X$ is minimal and topologically strongly mixing. 
For finite $K\subseteq A$, put $V_K=\bigcap_{k\in K}k^{-1}U$. 
Then
\[
\Int_K(A)=N_G(x,V_K)
\]
is right syndetic, and
\[
\Int_K(A)\Int_K(A)^{-1}
=N_G(V_K,V_K)
\]
is cofinite by strong mixing.

(2)$\implies$(3). 
By Lemma~\ref{lem:interior_vs_product}, $\Gamma_A(K)=\Int_K(A)\Int_K(A)^{-1}$ is cofinite for every finite $K\subseteq A$. Apply Proposition~\ref{prop:strong_mixing_realization} to $A$. 
The construction produces $x\in2^G$ with $x(e)=1$, positive support contained in $A$, and minimal, topologically strongly mixing orbit closure. 
Thus the positive support $C$ satisfies $e\in C\subseteq F$ and $x=\one_C$.

(3)$\implies$(1). 
In $X=\overline{G\one_A}$, the identity cylinder $U=\{y:y(e)=1\}$ is clopen and, by \eqref{eq:shift},
\[
N_G(\one_A,U)=A\subseteq F.\qedhere
\]
\end{proof}

Applying Theorem~\ref{thm:sm_sets}, we obtain the following further equivalent characterizations for return-time sets of topologically strongly mixing minimal $G$-flows.

\begin{proposition}\label{prop:more_equi_sm_sets}
Let $F\subseteq G$ with $e\in F$. The following four assertions are equivalent:
\begin{enumerate}[label=\textup{(\arabic*)}]
\item There exists a topologically strongly mixing minimal $G$-flow $X$, a point $x\in X$, 
and a non-empty open set $U\ni x$ such that $N_G(x,U)\subseteq F$.

\item There exists a sequence of decreasing right syndetic sets
\[
F_1\supseteq F_2\supseteq\cdots,\qquad F_n\subseteq F,
\]
such that for every $n$ and every finite $K\subseteq F_n$, some $m\ge n$ satisfies $KF_m\subseteq F_n$, 
and $\{g\in G:F_n\cap g^{-1}F_n\text{ is right syndetic}\}$ is cofinite for every $n$.
\item There exists a topologically strongly mixing minimal $G$-flow $(X,G)$, a point $x\in X$, and a non-empty
open set $U\subseteq X$ with $x\in\overline U$ such that
\[
N_G(x,U)\cup\{e\}\subseteq F.
\]
\item $F$ belongs to a syndetic, idempotent filter $\mathscr{F}$  on $G$ such that for every $B\in \mathscr{F}$
\[
\{g\in G\colon B\cap g^{-1} B\text{ is right syndetic}\}  
\]
is cofinite.
\end{enumerate}
\end{proposition}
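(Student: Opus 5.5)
The plan is to mirror the proof of Proposition~\ref{prop:more_equi_wm_sets} line by line, with bitranslation thickness replaced by cofiniteness throughout. Theorem~\ref{thm:sm_sets} serves as the hub. For (1)$\implies$(2), I would use Theorem~\ref{thm:sm_sets}(3) to obtain a binary point $x$ whose orbit closure $X_0$ is minimal and strongly mixing, with $N_G(x,U_1)\subseteq F$ for the identity cylinder $U_1$. I then extend $U_1$ to a decreasing clopen neighbourhood basis $(U_n)$ at $x$ and set $F_n=N_G(x,U_n)$. Right syndeticity follows from Lemma~\ref{lem:minimality}. The relation $KF_m\subseteq F_n$ follows by choosing $U_m\subseteq U_n\cap\bigcap_{k\in K}k^{-1}U_n$, as in Proposition~\ref{prop:more_equi_dyn_syn_sets}. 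For the bridge set, $F_n\cap g^{-1}F_n=N_G(x,U_n\cap g^{-1}U_n)$, which by minimality is right syndetic exactly when $U_n\cap g^{-1}U_n\neq\varnothing$. Hence the bridge set equals $N_G(U_n,U_n)$, which is cofinite.

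For (2)$\implies$(1), put $A=F_1\cup\{e\}$. Given finite $K\subseteq A$, choose $m$ with $(K\cap F_1)F_m\subseteq F_1$, so that $F_m\subseteq\Int_K(A)$. For every $g$ in the cofinite set $\{g:F_m\cap g^{-1}F_m \text{ right syndetic}\}$, we get $F_m\cap g^{-1}F_m\subseteq\Int_{K\cup Kg}(A)$, so $g\in\Gamma_A(K)$. Thus $\Gamma_A(K)$ contains a cofinite set and is therefore cofinite. Lemma~\ref{lem:interior_vs_product} converts this to cofiniteness of $\Int_K(A)\Int_K(A)^{-1}$, and Theorem~\ref{thm:sm_sets}(2)$\implies$(1) finishes. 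The implication (1)$\implies$(3) is trivial.

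For (3)$\implies$(1), put $F_0=N_G(x,U)\cup\{e\}$ and, for finite $K\subseteq F_0$, let $V=U\cap\bigcap_{h\in K\setminus\{e\}}h^{-1}U$. This set is non-empty and open because $x\in\overline U$. For $g\in N_G(V,V)$, the set $W=V\cap g^{-1}V$ is non-empty open and $N_G(x,W)\subseteq\Int_{K\cup Kg}(F_0)$. In a minimal flow, $N_G(x,W)$ is right syndetic by Lemma~\ref{lem:minimality}, so $N_G(V,V)\subseteq\Gamma_{F_0}(K)$. Strong mixing makes the left side cofinite. Then Lemma~\ref{lem:interior_vs_product} applies, since right syndeticity of $\Int_K(F_0)$ follows from the $g=e$ case. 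Theorem~\ref{thm:sm_sets} then yields (1).

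For (1)$\implies$(4), I take the neighbourhood filter $\mathscr F_x$. It is syndetic and idempotent exactly as in Proposition~\ref{prop:more_equi_dyn_syn_sets}. For $R=N_G(x,V)$ the bridge set is $N_G(V,V)$, hence cofinite. Every $B\in\mathscr F_x$ contains such an $R$, and the bridge set is monotone in $B$, so its bridge set is cofinite too. For (4)$\implies$(1), set $A=F\cap(F-\mathscr F)$. As before, idempotence gives $g^{-1}A\in\mathscr F$ for $g\in A$, so $\Int_K(A)\in\mathscr F$. Applying the hypothesis to $B=\Int_K(A)$ makes $\Gamma_A(K)$ cofinite, and Lemma~\ref{lem:interior_vs_product} together with Theorem~\ref{thm:sm_sets} concludes.

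The main point needing care is the monotonicity and minimality step identifying $\{g: F_n\cap g^{-1}F_n\text{ right syndetic}\}$ with $N_G(U_n,U_n)$. This uses that return sets to non-empty open sets from any point of a minimal flow are syndetic, which Lemma~\ref{lem:minimality} supplies. Everything else is formal.
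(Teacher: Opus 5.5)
Your proposal is correct and follows essentially the same route as the paper's proof. It uses Theorem~\ref{thm:sm_sets} as the hub and the same constructions: $F_n=N_G(x,U_n)$ from the binary realization, $A=F_1\cup\{e\}$, $F_0=N_G(x,U)\cup\{e\}$, the neighbourhood filter $\mathscr F_x$, and $A=F\cap(F-\mathscr F)$. The only cosmetic difference is in (3)$\implies$(1): the paper skips $\Gamma_{F_0}(K)$ and Lemma~\ref{lem:interior_vs_product} by noting directly that $N_G(V,V)=N_G(x,V)N_G(x,V)^{-1}\subseteq\Int_K(F_0)\Int_K(F_0)^{-1}$, whereas you argue via $N_G(V,V)\subseteq\Gamma_{F_0}(K)$ as in the weak-mixing analogue, and both are valid.
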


\begin{proof}
(1)$\implies$(2). 
Choose a binary point $x$ with minimal, topologically strongly mixing orbit closure and $N_G(x,U_1)\subseteq F$, where $U_1$ is the identity cylinder. 
Choose a decreasing clopen neighbourhood basis $U_1\supseteq U_2\supseteq\cdots$ at $x$, and put $F_n=N_G(x,U_n)$. 
Each $F_n$ is right syndetic. For $g\in G$, the identity
\[
F_n\cap g^{-1}F_n=N_G(x,U_n\cap g^{-1}U_n)
\]
and minimality show that this intersection is right syndetic exactly when $U_n\cap g^{-1}U_n\ne\varnothing$. Hence
\[
\{g\in G:F_n\cap g^{-1}F_n\text{ is right syndetic}\}=N_G(U_n,U_n),
\]
which is cofinite by strong mixing. 
Shrinking the basis produces $KF_m\subseteq F_n$ for finite $K\subseteq F_n$.

(2)$\implies$(1). 
Put $A=F_1\cup\{e\}$. 
If $K\subseteq A$ is finite, choose $m$ with $KF_m\subseteq A$. 
Then $F_m\subseteq\Int_K(A)$, so this finite interior is right syndetic. If $F_m\cap g^{-1}F_m$ is right syndetic, then
\[
F_m\cap g^{-1}F_m\subseteq\Int_K(A)\cap g^{-1}\Int_K(A)=\Int_{K\cup Kg}(A),
\]
and therefore $g\in\Gamma_A(K)$. The cofinite shift set in (2) is consequently contained in $\Gamma_A(K)$. Lemma~\ref{lem:interior_vs_product} converts this inclusion into the cofinite difference-set condition in Theorem~\ref{thm:sm_sets}(2), which proves (1). 

(1)$\implies$(3). This implication is immediate.
 
(3)$\implies$(1).
Put $F_0=N_G(x,U)\cup\{e\}$.  
For a finite $K\subseteq F_0$, set $H=K\setminus\{e\}$, $Q=\bigcap_{h\in H}h^{-1}U$, and $V=U\cap Q$.  
Since $x\in\overline U$, the open set $V$ is non-empty.
Moreover,
\[
N_G(x,V)\subseteq\Int_K(F_0).
\]
The set $N_G(x,V)$ is right syndetic and $N_G(V,V)=N_G(x,V)N_G(x,V)^{-1}$ is cofinite by strong mixing. Hence the finite-interior condition and the cofinite bridge condition hold. Theorem~\ref{thm:sm_sets} then proves (1).

(1)$\implies$(4). Let
\[
\mathscr F_x=\{N_G(x,V):V\text{ open and }x\in V\}^{\uparrow}.
\]
Minimality makes every basic return set right syndetic, so $\mathscr F_x$ is a syndetic filter. 
It is idempotent: if $R=N_G(x,V)$ and $g\in R$, then
\[
g^{-1}R=N_G(x,g^{-1}V)\in\mathscr F_x,
\]
so $R\subseteq R-\mathscr F_x$, and upward closure produces $R-\mathscr F_x\in\mathscr F_x$. 
If $R=N_G(x,V)$, then
\[
\{g\in G:R\cap g^{-1}R\text{ is right syndetic}\}=N_G(V,V),
\]
that is cofinite by strong mixing. 
Every member of $\mathscr F_x$ contains such an $R$; its bridge set contains the cofinite bridge set for $R$, so it is cofinite. 
Since the witnessing return set belongs to $\mathscr F_x$, $F\in\mathscr F_x$.

(4)$\implies$(1). 
Let $\mathscr F$ be as in (4), with $F\in\mathscr F$, and put
\[
D=F-\mathscr F,\qquad A=F\cap D.
\]
Then $e\in A$. 
If $g\in A$, then $g^{-1}F\in\mathscr F$ and idempotence shows
\[
g^{-1}D=(g^{-1}F)-\mathscr F\in\mathscr F,
\]
so $g^{-1}A\in\mathscr F$. 
Consequently, for every finite $K\subseteq A$,
\[
\Int_K(A)=\bigcap_{k\in K}k^{-1}A\in\mathscr F
\]
is right syndetic. 
Its bridge set
\[
\Gamma_A(K)=\{g\in G:\Int_K(A)\cap g^{-1}\Int_K(A)\text{ is right syndetic}\}
\]
is cofinite by the defining property of $\mathscr F$. 
Lemma~\ref{lem:interior_vs_product} now shows that
\[
\Int_K(A)\Int_K(A)^{-1}
\]
is cofinite.  
\end{proof}

\subsection{A free strongly mixing partner disjoint from a prescribed minimal flow}
\label{subsec:free_strong_mixing_disjoint}
We now use separated covering to synchronize prescribed finite patterns with arbitrary open sets in the given minimal flow; this synchronization will force the product with the constructed flow to be minimal.
For a finite set $D\subseteq G$, we say that $E\subseteq G$ is \emph{$D$-separated}
if $Ds\cap Ds^{\prime}=\varnothing$ whenever $s\ne s^{\prime}$ belong to $E$. 
We use the \emph{separated covering property} (SCP): for every finite $D\subseteq G$ and every non-empty open $U\subseteq X$, 
there is a finite $D$-separated set $E\subseteq G$ such that $E^{-1}U=X$. 

Every compact metrizable minimal $G$-flow has the following property, proved by Glasner et al.~\cite{GTWZ21}
\begin{theorem}[{\cite[Theorem~6.2]{GTWZ21}}]
Every compact metrizable minimal $G$-flow has the separated covering property.
\end{theorem}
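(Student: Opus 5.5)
The plan is to build the separated set in two layers. First, find a small open set $W$ together with many distinct elements $h_1,\dots,h_N$ that all carry $W$ into $U$. Then use minimality to translate $W$ over all of $X$ by finitely many elements $f$. For each $f$ we get a free choice among the $h_i$, and a greedy choice of $h_{i(f)}$ enforces $D$-separation. The point of putting the $h_i$ in the first layer is that the freedom of choice is fixed \emph{before} the finite covering set is chosen. So the number of options needed does not depend on the number of pieces to be separated.

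\textbf{Step 1: many translates into $U$.} Fix a finite $D\subseteq G$, which we may assume non-empty, and a non-empty open $U\subseteq X$. Pick $z\in U$. By Lemma~\ref{lem:minimality}, $N_G(z,U)$ is right syndetic, so $G=CN_G(z,U)$ for some finite $C$. Since $G$ is infinite, $N_G(z,U)$ is infinite. Put $F_0$ to be a finite set to be determined in Step~2, with $N>|F_0|\,|D|^2$ to be fixed there. Choose distinct $h_1,\dots,h_N\in N_G(z,U)$. By continuity of the finitely many maps $h_i$, there is an open $W\ni z$ with $h_iW\subseteq U$ for all $i$.

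The circularity ($N$ depends on $|F_0|$, and $F_0$ depends on $W$, which depends on $N$) has to be broken. I would break it by choosing the covering for a neighbourhood \emph{independent} of $N$. Concretely, fix first an open $W_0\ni z$ with $\overline{W_0}\subseteq U$. By minimality and compactness, $X=\bigcup_{f\in F_0}f^{-1}W_0$ for some finite $F_0$. This fixes $F_0$, and hence $N$. The catch is that $W$ from the first paragraph of this step may be smaller than $W_0$.

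\textbf{Step 2: covering.} This is the main obstacle: the covering set must be chosen for $W$, yet $W$ depends on $N$. I would try to resolve it by arranging that $F$ need not be chosen beforehand at all. The count only needs an upper bound on $|F|$. So I would first choose the $h_i$ lazily, as follows. Since $N_G(z,W_0)$ is infinite, we can take $h_i$ ranging over an arbitrarily large set. Then take $W=W_0\cap\bigcap_i h_i^{-1}U$, and take $F$ a finite set with $F^{-1}W=X$.

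If $|F|$ grows with $N$, I would instead use a uniform return-time bound. Choose open $W_1\ni z$ and finite $F$ with $F^{-1}W_1=X$ first. Then pick the $h_i$ from $N_G(z,W_1)$ chosen so that the sets $h_iW_1$ lie in $U$, shrinking only $W_1$ around $z$ via a point-dependent argument. Failing that, I would apply minimality at each point separately with a compactness refinement in which the per-piece option count stays $N$.

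\textbf{Step 3: greedy separation.} Given $W$, $F=\{f_1,\dots,f_n\}$ with $F^{-1}W=X$, and distinct $h_1,\dots,h_N$ with $h_iW\subseteq U$ and $N>n|D|^2$, choose $s_j=h_{i(j)}f_j$ successively so that $Ds_j$ is disjoint from every earlier $Ds_{j'}$.

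A collision $Dh_if_j\cap Ds_{j'}\neq\varnothing$ forces $h_i\in D^{-1}Ds_{j'}f_j^{-1}$, a set of size at most $|D|^2$. Since the $h_i$ are distinct, each earlier $s_{j'}$ excludes at most $|D|^2$ indices $i$. So fewer than $N$ indices are ever excluded, and a valid choice always exists.

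For $E=\{s_1,\dots,s_n\}$ and $x\in X$, pick $j$ with $f_jx\in W$. Then $s_jx=h_{i(j)}f_jx\in U$, so $E^{-1}U=X$. This argument works verbatim whether or not $X$ is finite.
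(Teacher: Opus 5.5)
Your Step~3 is sound. Once you have an open $W$, a finite $F=\{f_1,\dots,f_n\}$ with $F^{-1}W=X$, and distinct $h_1,\dots,h_N$ with $h_iW\subseteq U$ and $N>n|D|^2$, the greedy choice does give a finite $D$-separated $E$ with $E^{-1}U=X$. The gap is that you never produce such data: Step~2, where the proof has to happen, is missing.

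The inequality $N>|F|\,|D|^2$ compares the number of admissible translates with the covering number of $W$. Since $W$ must shrink as you add translates, $|F|$ may grow faster than $N$. None of your fallbacks breaks this loop.
\begin{itemize}
\item The ``lazy'' choice of the $h_i$, followed by choosing $F$ for $W=W_0\cap\bigcap_ih_i^{-1}U$, is the circular order itself.
\item Fixing $W_1$ and $F$ first presupposes more than $|F|\,|D|^2$ elements $g$ with $gW_1\subseteq U$. Take a topologically strongly mixing minimal flow with $\cl U\ne X$; such flows exist, and Section~\ref{sec:mixing_characterizations} constructs them. There $N_G(W_1,X\setminus\cl U)$ is cofinite, so $\{g:gW_1\subseteq U\}$ is finite for every non-empty open $W_1$. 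Nothing bounds it below by $|F|\,|D|^2$, and shrinking $W_1$ afterwards destroys $F^{-1}W_1=X$.
\item The pointwise refinement has the same defect: the neighbourhoods, and hence the size of the subcover, depend on the $N$ elements chosen.
\end{itemize}
Even a count restricted to nearby earlier choices does not close. Write $K=\{h_1,\dots,h_N\}$. The only available bound on the number of points of a $D$-separated set in $D^{-1}DKf_j$ is of order $|D|^2N$, and each such point excludes up to $|D|^2$ indices.

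The missing idea is that the constraints are \emph{local}, with dependency degree bounded independently of $|F|$. Keep your $K\subseteq N_G(z,U)$, your $W=\bigcap_ih_i^{-1}U$, and any finite $F$ with $F^{-1}W=X$. It suffices to find a $D$-separated $E\subseteq KF$ meeting every $Kf$, $f\in F$; you need not take exactly one point per $f$.
\begin{itemize}
\item Put each point of $KF$ into $E$ independently with probability $\epsilon$.
\item The bad events are ``$Kf\cap E=\varnothing$'', with probability at most $e^{-\epsilon N}$, and ``$a,b\in E$'' for $a\ne b$ with $b\in D^{-1}Da$, with probability $\epsilon^2$.
\item Each bad event shares coordinates with at most $N^2+N|D|^2$ others, independently of $|F|$.
\item Use weights $2e^{-\epsilon N}$ and $2\epsilon^2$ and take $\epsilon=3\ln N/N$. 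The asymmetric Lov\'asz Local Lemma then applies as soon as $4/N+36|D|^2(\ln N)^2/N\le\ln 2$, a condition on $N$ and $|D|$ alone.
\end{itemize}

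Alternatively, for $|D|\ge2$ the statement follows from Bernoulli disjointness \cite[Theorem~1.1]{GTWZ21}. Take a finite pattern $p$ with $p(a_q)=0$ and $p(a_qq)=1$ for each $q\in D^{-1}D\setminus\{e\}$, at pairwise distinct sites. Then $\Occ_\omega(p)$ is $D$-separated for every $\omega\in2^G$. If the property failed for $(D,U)$, compactness would make $\{(x,\omega):tx\notin U\text{ for all }t\in\Occ_\omega(p)\}$ a proper joining of $X$ and $2^G$.

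The paper itself gives no proof of this statement; it quotes it from \cite{GTWZ21}, so there is no in-paper argument to compare against.
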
 

\begin{lemma}\label{lem:relative_sync}
Let $X$ be a compact metrizable minimal $G$-flow, 
and let $(U_m)_{m\ge1}$ be a countable basis of non-empty open subsets of $X$.
Let $Y$ be a minimal $G$-flow, let $y_0\in Y$, and let
\[
W_1\supseteq W_2\supseteq\cdots
\]
be a decreasing neighbourhood basis at $y_0$.
Suppose that for every $m,k\ge1$ there exists a finite set $E_{m,k}\subseteq G$ and a point $y_{m,k}\in Y$ such that
\begin{equation}\label{eq:relative_sync_cover}
E_{m,k}^{-1}U_m=X,
\end{equation}
and
\begin{equation}\label{eq:relative_sync_pattern}
E_{m,k}y_{m,k}\subseteq W_k.
\end{equation}
Then $X\times Y$ is minimal. 
Consequently,
\[
X\perp Y.
\]
\end{lemma}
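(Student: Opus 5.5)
To prove Lemma~\ref{lem:relative_sync}, I would show that every point $(x,y)\in X\times Y$ has dense orbit. Then $X\times Y$ is minimal, and disjointness follows because any joining $J$ is a non-empty closed invariant subset of the minimal flow $X\times Y$, so $J=X\times Y$.

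For density, I fix $(x,y)$ and a basic open rectangle $U\times V$, with $U\subseteq X$ and $V\subseteq Y$ non-empty open. The goal is some $g\in G$ with $gx\in U$ and $gy\in V$. The first reduction uses minimality of $Y$: there is $h\in G$ with $hy_0\in V$. By continuity, there is $k$ with $hW_k\subseteq V$, since the $W_k$ form a neighbourhood basis at $y_0$. Similarly, I shrink $U$ to a basic set $U_m$. I do this after translating by $h$: the set $h^{-1}U$ is non-empty and open, so I choose $m$ with $U_m\subseteq h^{-1}U$. It then suffices to find $g'$ with $g'x\in U_m$ and $g'y\in W_k$, because $g=hg'$ gives $gx\in hU_m\subseteq U$ and $gy\in hW_k\subseteq V$.

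The hypothesis supplies only the particular point $y_{m,k}$, not an arbitrary $y$, so I next bring $y$ near $y_{m,k}$. For each $s\in E:=E_{m,k}$ we have $sy_{m,k}\in W_k$, and I take $k'\ge k$ large enough that the finite family of openness conditions below holds. Concretely, the set
\[
O=\bigcap_{s\in E}s^{-1}\operatorname{Int}(W_k)
\]
is an open neighbourhood of $y_{m,k}$, provided I first replace each $W_k$ by its interior (assume without loss of generality that the basis consists of open sets; if not, pass to $W_{k+1}\subseteq\operatorname{Int}W_k$ by reindexing). By minimality of $Y$, there is $a\in G$ with $ay\in O$, so $say\in W_k$ for every $s\in E$. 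Now \eqref{eq:relative_sync_cover} gives $E^{-1}U_m=X$, so $ax\in s^{-1}U_m$ for some $s\in E$, i.e. $sax\in U_m$. Taking $g'=sa$ yields $g'x\in U_m$ and $g'y\in W_k$.

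The only delicate point, and the one I expect to be the main obstacle, is the order of choices. The element $a$ must be chosen after $m$, $k$ and $E_{m,k}$ are fixed and must depend on $y$ only. The element $s$ is then selected from $E$ using $x$, and the covering property guarantees it exists whatever $ax$ is. Because the same $a$ works simultaneously for all $s\in E$, this yields the synchronized hit, and no use of metrizability beyond the countable basis is needed.
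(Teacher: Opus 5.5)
Your proposal is correct and follows the paper's proof step for step. You translate by an element carrying $y_0$ into $V$, choose $k$ and $m$, use minimality of $Y$ to move $y$ into the open set $\bigcap_{s\in E}s^{-1}W_k$ around $y_{m,k}$, and then let the covering $E^{-1}U_m=X$ pick the $s$ that works for $x$. Your extra step for the case where $W_k$ is not open is a harmless refinement: you pass to an index $k'$ with $W_{k'}$ inside the interior of $W_k$ and use $E_{m,k'}$ and $y_{m,k'}$. The paper simply treats the $W_k$ as open, which holds in its application, where they are clopen cylinders.
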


\begin{proof}
Fix $(x,y)\in X\times Y$ and a non-empty open rectangle $U\times V$. 
By minimality of $Y$, choose $a\in G$ and $k$ such that $aW_k\subseteq V$. Choose $m$ with $U_m\subseteq a^{-1}U$. 
Put $E=E_{m,k}$ and $y'=y_{m,k}$. 
The set $C=\bigcap_{s\in E}s^{-1}W_k$ is a non-empty open set containing $y'$ by \eqref{eq:relative_sync_pattern}. 
By minimality choose $h\in G$ with $hy\in C$. 
Equation~\eqref{eq:relative_sync_cover}, applied to $hx$, produces $s\in E$ with $shx\in U_m$. 
For this $s$, $shy\in W_k$. 
Consequently, $ashx\in aU_m\subseteq U$ and $ashy\in aW_k\subseteq V$. 
Thus every orbit in $X\times Y$ meets every non-empty open rectangle, so the product is minimal. 
Since both factors are minimal, every minimal subflow of $X\times Y$ projects onto both factors and hence is a joining. Thus disjointness forces the product to be minimal; conversely, a minimal product has no proper joining.
\end{proof}

\begin{lemma}
\label{lem:finite_avoidance}
Let $D\subseteq G$ be finite and non-empty, and let $M\ge0$ be an integer.
There is a finite set $R\subseteq G$, with $e\in R$, such that for every $b\in G$ and every finite $W\subseteq G$ with $|W|\le M$ there is $u\in Rb$ satisfying
\begin{equation*}
Du\cap W=\varnothing.
\end{equation*}
\end{lemma}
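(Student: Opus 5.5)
We take $R$ to be a set of right translates whose $D$-shadows are pairwise disjoint; this is the same device as in Lemma~\ref{lem:many_legal_centres}, but with no syndetic set $L$ to restrict to. Choose distinct elements $r_0=e,r_1,\ldots,r_M\in G$ such that the sets $Dr_j$, $0\le j\le M$, are pairwise disjoint, and put $R=\{r_0,\ldots,r_M\}$. Then $e\in R$ and $R$ is finite.

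The choice is possible by a greedy procedure. Having chosen $r_0,\ldots,r_{j-1}$, we must avoid $Dr_j\cap Dr_i\ne\varnothing$ for each $i<j$. This means avoiding $r_j\in D^{-1}Dr_i$ for $i<j$, which excludes only the finite set $\bigcup_{i<j}D^{-1}Dr_i$. Since $G$ is infinite, an admissible $r_j$ exists.

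Now fix $b\in G$ and a finite $W\subseteq G$ with $|W|\le M$. Right translation by $b$ is a bijection, so the sets $Dr_jb$, $0\le j\le M$, are still pairwise disjoint. Every $w\in W$ therefore lies in at most one of them. Hence at most $|W|\le M$ of these $M+1$ sets meet $W$, and some index $j$ has $Dr_jb\cap W=\varnothing$. Setting $u=r_jb\in Rb$ gives $Du\cap W=\varnothing$. When $M=0$ we have $W=\varnothing$ and $u=b$ works, consistent with $R=\{e\}$.

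The only step requiring any care is the pigeonhole count. It depends on the $D$-shadows being disjoint and on this disjointness being preserved under right multiplication by $b$. Neither point is a real obstacle: disjointness is exactly what the greedy choice guarantees, and its invariance is immediate from $(Dr_ib)\cap(Dr_jb)=(Dr_i\cap Dr_j)b$. The argument uses nothing beyond $G$ being infinite.
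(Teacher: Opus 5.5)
Your proof is correct and follows essentially the same route as the paper: greedily choose $M+1$ right translates with pairwise disjoint $D$-shadows, then use right-invariance of disjointness and pigeonhole against $|W|\le M$. The only difference is cosmetic. The paper takes $r_1,\ldots,r_{M+1}$ and then adjoins $e$ separately, whereas you use $r_0=e$ as one of the $M+1$ translates, which gives a slightly smaller $R$.
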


\begin{proof}
Choose $r_1,\ldots,r_{M+1}$ recursively so that the finite sets $Dr_j$ are pairwise disjoint. 
This is possible because, at the $j$th step, only the finite set $\bigcup_{\nu<j}D^{-1}Dr_\nu$ is forbidden. 
Put $R=\{e,r_1,\ldots,r_{M+1}\}$. The sets $Dr_jb$ remain pairwise disjoint. 
If every one met $W$, their pairwise disjointness would force $|W|\ge M+1$, a contradiction.
\end{proof}

\begin{lemma}\label{lem:three_color_extension}
Let $g\in G\setminus\{e\}$ and let $D\subseteq G$ be finite. 
Define the graph $\mathcal G(g,D)=(D,E)$ by
\[
E=\bigl\{\{h,gh\}:h\in D,\ gh\in D\bigr\}.
\]
Then every vertex of $\mathcal G(g,D)$ has degree at most two. 
If $P\subseteq D$ and $c:P\to\{0,1,2\}$ satisfies $c(h)\ne c(gh)$ whenever $h,gh\in P$, 
then there exists $\widetilde c:D\to\{0,1,2\}$ such that $\widetilde c|_P=c$ and
$\widetilde c(h)\ne\widetilde c(gh)$ whenever $h,gh\in D$.
\end{lemma}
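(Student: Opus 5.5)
The degree bound is immediate from the definition. An edge at a vertex $h\in D$ has the form $\{h,gh\}$ or $\{g^{-1}h,h\}$, so its other endpoint is $gh$ or $g^{-1}h$. Hence $h$ has at most two neighbours. (If $g^2=e$ these coincide, and the degree is at most one.) Since $g\ne e$, there are no loops. So $\mathcal G(g,D)$ is a finite simple graph of maximum degree at most two, and each connected component is a path (possibly a single vertex) or a cycle. I will treat the cycle case as the vertex set of a finite orbit segment of $\langle g\rangle$ acting by left multiplication inside a right coset $\langle g\rangle h$. This is the same picture as in Lemma~\ref{lem:compact_free}.

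The extension will be done greedily, one vertex at a time. Enumerate $D\setminus P$ as $h_1,\dots,h_n$ and define $\widetilde c(h_j)$ inductively. When $h_j$ is coloured, its already-coloured neighbours are at most two, namely among $gh_j$ and $g^{-1}h_j$. So at most two colours are forbidden, and a third colour in $\{0,1,2\}$ remains available; choose it. The edges inside $P$ are properly coloured by hypothesis. Every edge with at least one endpoint outside $P$ is checked when the later of its two endpoints is coloured, so the final $\widetilde c$ is proper on every edge $\{h,gh\}$ with $h,gh\in D$. This is exactly the required condition $\widetilde c(h)\ne\widetilde c(gh)$.

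I don't expect a real obstacle. The one point to state carefully is that the condition "$\widetilde c(h)\ne\widetilde c(gh)$ whenever $h,gh\in D$" is precisely properness on the edge set $E$. Using $h'=gh$ shows that the condition involving $g^{-1}h$ is the same edge read from the other end. Once that is said, the maximum degree of two plus three colours settles it; odd cycles cause no difficulty, because greedy colouring with $\Delta+1$ colours never gets stuck.
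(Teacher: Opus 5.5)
Your proposal is correct and follows the paper's proof: the neighbours of $h$ can only be $gh$ and $g^{-1}h$, which gives the degree bound, and the extension is then a greedy colouring of $D\setminus P$ with at most two forbidden colours at each step. The remarks on path and cycle components are true but not needed for the argument.
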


\begin{proof}
For each $h\in D$, the only possible neighbors of $h$ in $\mathcal G(g,D)$ are $gh$ and $g^{-1}h$, so its degree is at most two. 
Order the vertices of  $D\setminus P$ as $v_1,\ldots,v_s$. 
Extend $c$ recursively. 
At the step $v_j$, at most two colors are forbidden by already colored neighbors of $v_j$; 
consequently one of the three colors is available. 
The resulting coloring
$\widetilde c$ extends $c$ and is proper on every edge of $\mathcal G(g,D)$.
\end{proof}

We are now ready to prove Theorem~\ref{thm:free_strong_mixing_disjoint}.
\begin{proof}[Proof of Theorem~\ref{thm:free_strong_mixing_disjoint}]
Fix a countable basis $(U_m)_{m\ge1}$ of non-empty open subsets of $X$. 
Write $G=\{g_1,g_2,\ldots\}$ with $g_1=e$. 
Enumerate $\mathbb N\times G$ as $(k_r,\gamma_r)_{r\ge1}$ and $\mathbb N^2$ as
$(\ell_r,m_r)_{r\ge1}$, with $k_r,\ell_r\le r$ and with every pair occurring. 
Let $\mathcal Z=\prod_{n\ge1}\{0,1,2\}^G$ with the shift action
$(gz)_n(h)=z_n(hg)$.

Before the first extension stage, we record the finite pattern data and the invariants that will be preserved. The induction tracks domain growth, color constraints enforcing freeness, mixing and synchronization blocks, and the frame and global covering estimates used in the limit.

\medskip
\noindent\textbf{Step 1: Initial construction.}
Put $P_0=B_1=D_1=\{e\}$ and choose a map
$c_{1,1}\colon D_1\to\{0,1,2\}$. Define
\[
q_1\colon D_1\to\{0,1,2\},
\qquad q_1(d)=c_{1,1}(d).
\]
Set
\[
T_1=\{e,g_2,g_2^{-1}\},\qquad
\widehat D_1=T_1D_1,\qquad \mu_1=|\widehat D_1|.
\]
Choose $R_1$ from Lemma~\ref{lem:finite_avoidance} with
$D=\widehat D_1$ and $M=\mu_1$.
Choose a finite symmetric set $S_1$ containing
$e,D_1,\widehat D_1,P_0$, and $R_1$, and put $P_1=S_1^5$.

\medskip
\noindent\textbf{Step 2: Induction hypothesis.}
Assume that the finite sets $B_i,D_i,R_i,S_i$ and the finite color patterns
\[
q_i\colon D_i\to\{0,1,2\}^{i},
\qquad q_i(d)=\bigl(c_{1,i}(d),\ldots,c_{i,i}(d)\bigr),
\]
have been constructed for $1\le i\le r$, where
$c_{n,i}\colon D_i\to\{0,1,2\}$ for $1\le n\le i$.
The domains satisfy $D_i=P_{i-1}B_i$ and, for $2\le i\le r$,
$D_{i-1}\cup\{g_1,\ldots,g_i\}\subseteq B_i$.
The patterns are compatible in the following sense: for $1\le i<r$ and every $d\in D_i$,
\[
q_{i+1}(d)|_{\{1,\ldots,i\}}=q_i(d),
\qquad\text{equivalently}\qquad
c_{n,i+1}(d)=c_{n,i}(d)\quad(1\le n\le i).
\]
They also satisfy the color invariant
\[
c_{n,i}(g_{n+1}h)\ne c_{n,i}(h)
\quad\text{whenever }1\le n\le i\le r,
\quad h,g_{n+1}h\in D_i.
\]
For each $1\le i\le r$, put
\[
T_i=\{e,g_2,g_2^{-1},\ldots,g_{i+1},g_{i+1}^{-1}\},
\qquad \widehat D_i=T_iD_i,
\qquad \mu_i=2\sum_{m<i}|\widehat D_m|+|\widehat D_i|.
\]
We call $\widehat D_i u=T_iD_i u$ the \emph{halo} of the domain $D_i u$.
The set $R_i$ has been chosen as in Lemma~\ref{lem:finite_avoidance}
with $D=\widehat D_i$ and $M=\mu_i$.
The finite symmetric set $S_i=S_i^{-1}$ contains $e$, $D_i$,
$\widehat D_i$, $P_{i-1}$, $R_i$, and all earlier finite control sets,
and
\[
P_i=S_1^5S_2^5\cdots S_i^5.
\]

For $1\le i\le j\le r$, define the occurrence set by
\[
\Occ_{q_j}(q_i)=\{t\in G:D_it\subseteq D_j,
\ c_{n,j}(dt)=c_{n,i}(d)\text{ for all }d\in D_i, 1\le n\le i\}.
\]
For $z=(c_n)_{n\ge1}\in\mathcal Z$, use the notation
\[
\Occ_z(q_i)=\{t\in G:c_n(dt)=c_{n,i}(d)\text{ for all }d\in D_i, 1\le n\le i\}.
\]
Whenever the data at level $j$ are fixed, put
\[
V_i^{(j)}=S_i^5S_{i+1}^5\cdots S_{j-1}^5B_j
\quad(1\le i<j),\qquad V_j^{(j)}=B_j,
\]
and define the level-$i$ frame of $D_j$ by
\[
\Fr_{i,j}=D_j\setminus S_i^3V_{i+1}^{(j)}\qquad(1\le i<j).
\]
Assume the frame covering invariant
\[
\Fr_{i,j}\subseteq S_i^6\Occ_{q_j}(q_i)
\qquad(1\le i<j\le r),
\]
and the global covering invariant
\[
D_j\subseteq S_i^8\Occ_{q_j}(q_i)
\qquad(1\le i\le j\le r).
\]
At level $1$ the color condition is vacuous,
$D_1\subseteq S_1^8\Occ_{q_1}(q_1)$ holds because
$e\in\Occ_{q_1}(q_1)$, and no frame is needed at equal indices.

\medskip
\noindent\textbf{Step 3: Inductive extension.}
At stage $r$, put $(k,\gamma)=(k_r,\gamma_r)$ and $Q_k=\widehat D_k^{-1}\widehat D_k$. 
If $\gamma\in Q_k$, set $\mathcal J_r^{\mathrm{mix}}=\varnothing$. 
Otherwise put
\begin{equation*}
\widehat H_r=\bigcup_{i=1}^r\widehat D_iR_i
\end{equation*}
and choose
\begin{equation*}
t\notin\widehat H_r^{-1}D_r\cup\gamma^{-1}\widehat H_r^{-1}D_r.
\end{equation*}
The exceptional set is finite. 
Consequently, every set $\widehat D_iR_it$ and $\widehat D_iR_i\gamma t$ avoids $D_r$. 
Moreover, $\gamma\notin Q_k$ implies
\begin{equation*}
\widehat D_kt\cap\widehat D_k\gamma t=\varnothing.
\end{equation*}
Set $b_0=t$ and $b_1=\gamma t$, and put exact copies of $q_k$ on $D_kb_0$ and $D_kb_1$.

For $i=k+1,\ldots,r$, choose $u_i^{(0)}$ and $u_i^{(1)}$ successively.
Before each choice let $W$ be the union of the halos of all core and guard domains already chosen, excluding $D_r$. 
Then
\[
|W|\le 2\sum_{m<i}|\widehat D_m|+|\widehat D_i|=\mu_i;
\]
the last term is present only when the second choice at level $i$ is made.
Apply Lemma~\ref{lem:finite_avoidance} with $D=\widehat D_i$, $M=\mu_i$, and $b=b_0$ or $b=b_1$, respectively. 
Choose $u_i^{(\nu)}\in R_ib_\nu$ so that $\widehat D_i u_i^{(\nu)}\cap W=\varnothing$. 
By the choice of $t$, this halo also avoids $D_r$. 
Thus all selected halos are pairwise disjoint, and in particular the actual guard domains are disjoint from one another and
from the two core domains. 
Put an exact copy of $q_i$ on  $D_iu_i^{(\nu)}$ and let $\mathcal J_r^{\mathrm{mix}}$ be the family of all core and guard domains.

For $k\le m<i$, write $u_m^{(\nu)}=\alpha_m b_\nu$ and $u_i^{(\nu)}=\alpha_i b_\nu$, 
where $\alpha_k=e$ and $\alpha_m\in R_m$ for $m>k$. 
Since $D_m\subseteq S_m\subseteq P_{i-1}\subseteq S_i$ and $R_m,R_i\subseteq S_i$, we have
\begin{equation}\label{eq:free_guard_distance}
D_mu_m^{(\nu)}(u_i^{(\nu)})^{-1}
\subseteq D_mR_mR_i^{-1}\subseteq S_i^3.
\end{equation}

Apply the SCP to $(\widehat D_r,U_{m_r})$ and obtain a finite $\widehat D_r$-separated set $E_r$ such that
\begin{equation}\label{eq:free_Er}
E_r^{-1}U_{m_r}=X,
\qquad \widehat D_r s\cap\widehat D_r s'=\varnothing
\quad(s\ne s',\ s,s'\in E_r).
\end{equation}
Let $A_r$ be the union of $D_r$ and all actual blocks in  $\mathcal J_r^{\mathrm{mix}}$. 
Since $G$ is infinite, choose
\begin{equation}\label{eq:free_tau}
\tau\notin\biggl(\bigcup_{s\in E_r}\widehat D_r s\biggr)^{-1}A_r.
\end{equation}
Then the halos $\widehat D_r s\tau$ are pairwise disjoint and avoid $A_r$.
On each $D_r s\tau$, $s\in E_r$, put an exact copy of $q_r$; call these blocks $\mathcal J_r^{\mathrm{sync}}$. 
For every $n\le r$, the set $T_r$ contains $g_{n+1}^{\pm1}$, so the old color constraints cannot be violated
across two synchronization blocks or between a synchronization block and an already assigned block.

Set
\begin{equation*}
B_{r+1}=D_r\cup\{g_1,\ldots,g_{r+1}\}
\cup\bigcup_{J\in\mathcal J_r^{\mathrm{mix}}}J
\cup\bigcup_{J\in\mathcal J_r^{\mathrm{sync}}}J,
\qquad D_{r+1}=P_rB_{r+1}.
\end{equation*}
All blocks listed in $B_{r+1}$ carry their prescribed exact patterns. 
Process $i=r,r-1,\ldots,1$. 
Just before level $i$, let $\mathcal H_i$ consist of $D_r$, all mixing and synchronization blocks, 
and all ordinary blocks already placed at levels greater than $i$. 
Define
\begin{equation*}
\Lambda_i=\{\ell:\widehat D_i\ell\subseteq V_i^{(r+1)},\quad
\widehat D_i\ell\cap J=\varnothing\text{ for every }J\in\mathcal H_i\}.
\end{equation*}
Choose an inclusion-maximal $M_i\subseteq\Lambda_i$ for which the halos $\widehat D_i t$, $t\in M_i$, are pairwise disjoint, 
and put exact copies of $q_i$ on $D_it$, $t\in M_i$. 
Since all members of $\mathcal H_i$ lie in $V_{i+1}^{(r+1)}$, 
every buffer domain in $V_i^{(r+1)}\setminus V_{i+1}^{(r+1)}$ avoids $\mathcal H_i$.

For $n\le r$, consider the graph $\mathcal G(g_{n+1},D_{r+1})$. 
The partial coloring supplied by the exact blocks is proper: inside a block this follows from the color invariant in Step~2, 
while an edge joining two distinct assigned blocks would have one endpoint in the halo of the other,
contrary to halo avoidance or pairwise halo disjointness. 
Apply Lemma~\ref{lem:three_color_extension} to extend this partial coloring to $D_{r+1}$. 
For the new coordinate $n=r+1$ start with the empty partial coloring and apply the same lemma. 
The resulting $q_{r+1}$ extends $q_r$, has the required color invariant, and all color coordinates are now defined on $D_{r+1}$.

\medskip
\noindent\textbf{Claim 1.}
For every $1\le i\le r$,
\begin{equation*}
\Fr_{i,r+1}\subseteq S_i^6\Occ_{q_{r+1}}(q_i).
\end{equation*}
\begin{proof}
The product geometry shows $V_i^{(r+1)}=S_i^5V_{i+1}^{(r+1)}\subseteq D_{r+1}\subseteq S_i^6V_{i+1}^{(r+1)}$. 
If $a\in\Fr_{i,r+1}$, Lemma~\ref{lem:buffer} with $R=6$, $D=\widehat D_i$, $C=\{e\}$, and $L=G$ shows $\ell$ with
$\widehat D_i\ell\subseteq V_i^{(r+1)}\setminus V_{i+1}^{(r+1)}$ and $a\in S_i^4\ell$. 
Consequently, $\ell\in\Lambda_i$. 
Lemma~\ref{lem:packing} shows $\ell\in S_i^2t$ for some $t\in M_i$, and the exact copy on $D_it$ makes $t\in\Occ_{q_{r+1}}(q_i)$. 
Thus $a\in S_i^6t$.
\end{proof}

\medskip
\noindent\textbf{Claim 2.}
For every $1\le i\le r$,
\begin{equation*}
D_{r+1}\subseteq S_i^8\Occ_{q_{r+1}}(q_i).
\end{equation*}
\begin{proof}
Fix $i$. 
Suppose $a\in D_j u$ belongs to a central, synchronization, mixing, or higher ordinary block carrying $q_j$. 
If $j>i$, the inclusion $D_j\subseteq S_i^8\Occ_{q_j}(q_i)$ from Step~2 implies that $a\in S_i^8vu$ for some $v\in\Occ_{q_j}(q_i)$, and occurrence inheritance shows that $vu\in\Occ_{q_{r+1}}(q_i)$. 
If $j=i$, the exact centre $u$ itself covers $a\in D_i u\subseteq S_i u$. 
If $j<i$, the block is a lower mixing block, and \eqref{eq:free_guard_distance} covers $a$ by the corresponding level-$i$ guard centre. 
A point in a current level-$i$ ordinary block is likewise covered by its exact centre.

It remains to consider a point $a$ outside the central, synchronization, and mixing blocks and all ordinary blocks of levels at least $i$.
Put $\ell=a$. 
For $y=d\ell$ with $d\in\widehat D_i\subseteq S_i$,
\begin{equation}\label{eq:free_candidate_distance}
ay^{-1}=d^{-1}\in S_i,
\qquad ya^{-1}=d\in S_i.
\end{equation}
If $\widehat D_i\ell\not\subseteq V_i^{(r+1)}$, choose $y\in\widehat D_i\ell\setminus V_i^{(r+1)}$. 
If $a\in S_i^3V_{i+1}^{(r+1)}$, then \eqref{eq:free_candidate_distance} would show
$y\in S_i^4V_{i+1}^{(r+1)}\subseteq V_i^{(r+1)}$, a contradiction.
Thus $a\in\Fr_{i,r+1}$ and Claim~1 applies.

Suppose $\widehat D_i\ell\subseteq V_i^{(r+1)}$ and this halo meets an exact higher block $D_ju$, $j>i$.
Write $y=d\ell=bu$ in the intersection, where $d\in\widehat D_i$ and $b\in D_j$.
Since $a$ is outside that block, $au^{-1}\notin D_j$, while $(au^{-1})b^{-1}=ay^{-1}\in S_i$. 
If $b\notin\Fr_{i,j}$, then $b\in S_i^3V_{i+1}^{(j)}$ and $au^{-1}\in S_i^4V_{i+1}^{(j)}\subseteq V_i^{(j)}\subseteq D_j$, a contradiction. Thus $b\in\Fr_{i,j}$. 
Since $\Fr_{i,j}\subseteq S_i^6\Occ_{q_j}(q_i)$ by Step~2, there is $v\in\Occ_{q_j}(q_i)$ with $b\in S_i^6v$, and inheritance shows $vu\in\Occ_{q_{r+1}}(q_i)$. 
Consequently, $a\in S_i^7vu\subseteq S_i^8vu$.

If the halo meets a fixed block $D_i u$ carrying $q_i$, choose $y=bu$ in the intersection with $b\in D_i\subseteq S_i$. Equation~\eqref{eq:free_candidate_distance} implies $a\in S_i y\subseteq S_i^2u$, and $u\in\Occ_{q_{r+1}}(q_i)$ provides the required covering. This includes central, synchronization, and mixing blocks of level $i$.

If the halo meets a fixed lower mixing block $D_mu_m^{(\nu)}$ with $m<i$,
then \eqref{eq:free_guard_distance} and \eqref{eq:free_candidate_distance} show $a\in S_i^4u_i^{(\nu)}$. 
In the remaining case the halo avoids every member of $\mathcal H_i$, so $\ell\in\Lambda_i$. Intersections with ordinary blocks inserted at level $i$ or below do not enter the definition of $\Lambda_i$.
Maximality and Lemma~\ref{lem:packing} show $\ell\in S_i^2t$ for some $t\in M_i\subseteq\Occ_{q_{r+1}}(q_i)$, and consequently $a\in S_i^2t$.
These cases exhaust $D_{r+1}$.
\end{proof}

At the end of the stage choose $R_{r+1}$ by Lemma~\ref{lem:finite_avoidance} with $D=\widehat D_{r+1}$ and $M=\mu_{r+1}$. 
Enlarge $S_{r+1}$ to contain all finite data used above and $D_{r+1}$. 
Then
\[
D_{r+1}\subseteq S_{r+1}^8\Occ_{q_{r+1}}(q_{r+1})
\]
because $e\in\Occ_{q_{r+1}}(q_{r+1})$. 
Thus the data satisfy the induction hypothesis in Step~2 at level $r+1$.
This completes the induction.

\medskip
\noindent\textbf{Step 4: Passage to the limit and verification of the conclusion.}
Since $g_r\in B_r\subseteq D_r$, the domains $D_r$ exhaust $G$. 
The compatibility relations define a unique $z\in\mathcal Z$. 
Put $Y=\overline{Gz}$ and, for each $i$, let
\[
[q_i]=\{w\in\mathcal Z:w_n(d)=c_{n,i}(d)\text{ for all }d\in D_i,
\ 1\le n\le i\},
\qquad W_i=Y\cap[q_i].
\]
The sets $W_i$ form a decreasing neighbourhood basis at $z$. 
Since $D_j\subseteq S_i^8\Occ_{q_j}(q_i)$ for every $j\ge i$ by Step~2 and the domains exhaust $G$, occurrence inheritance shows that
\begin{equation*}
G=S_i^8\Occ_z(q_i)=S_i^8N_G(z,W_i).
\end{equation*}
Thus $z$ and $Y$ are minimal.

For fixed $k$, every $\gamma\notin Q_k$ occurs in a mixing requirement. 
The corresponding two exact $q_k$-cores are preserved by all later extensions, so $\gamma\in N_G(W_k,W_k)$. 
Since $Q_k$ is finite, $N_G(W_k,W_k)$ is cofinite. 
For arbitrary non-empty open $U,V\subseteq Y$, choose $a,b\in G$ and $k$ with $aW_k\subseteq U$ and $bW_k\subseteq V$.
Then
\begin{equation*}
N_G(aW_k,bW_k)=bN_G(W_k,W_k)a^{-1}
\end{equation*}
is cofinite and contained in $N_G(U,V)$. 
Consequently, $Y$ is strongly mixing.

For every $n$ and every $h\in G$, choose $i$ with $h,g_{n+1}h\in D_i$.
The color constraints are closed and shift invariant, so they hold for every $y=(c_n)_{n\ge1}\in Y$. In particular, for every $n$ and $h\in G$, $c_n(g_{n+1}h)\ne c_n(h)$. If $g_{n+1}$ fixed a point $y\in Y$, 
then at $h=e$ the shift formula would show $c_n(g_{n+1})=c_n(e)$, a contradiction.
Thus every nonidentity element acts without fixed points and $Y$ is free.

When $(\ell_r,m_r)=(\ell,m)$ is processed, \eqref{eq:free_Er} and \eqref{eq:free_tau} show a set $E_r$ and a translate $\tau$ with
$E_r^{-1}U_m=X$. 
Put $y_{m,\ell}=\tau z$. Since each synchronization block is an exact copy of $q_r$ and $\ell\le r$,
\begin{equation*}
E_ry_{m,\ell}\subseteq W_\ell.
\end{equation*}
Lemma~\ref{lem:relative_sync} produces $X\times Y$ minimal and hence $X\perp Y$.

The space $Y$ is a closed subspace of the compact metrizable zero-dimensional space $\mathcal Z$. 
It is infinite because the action is free and $G$ is infinite. 
If a minimal compact flow had an isolated point, minimality would make  it finite and discrete; 
this is impossible for a free action of an infinite group.
Thus $Y$ has no isolated points and is a Cantor space.
\end{proof}

\begin{proposition}\label{prop:free_mixing_realization}
Let $e\in F\subseteq G$.
If there exists a topologically strongly mixing (or topologically weakly mixing of all finite orders) minimal $G$-flow $X$, a point $x\in X$, and an open neighbourhood $U$ of $x$ such that $ N_G(x,U)\subseteq F$,
then there exists a free, topologically strongly mixing (or weakly mixing of all finite orders) minimal $G$-flow on a Cantor space $X$, 
a point $x\in X$ , and a clopen neighbourhood $U$ of $x$ such that $N_G(x,U) \subseteq F$.
\end{proposition}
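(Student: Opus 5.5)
First replace the given flow by a binary one. By Theorem~\ref{thm:sm_sets} (respectively Theorem~\ref{thm:wm_allorders}), the hypothesis yields $e\in A\subseteq F$ such that $X_0=\overline{G\one_A}$ is minimal and topologically strongly mixing (respectively weakly mixing of all finite orders). Moreover, for the clopen identity cylinder $U_0=\{y\in X_0:y(e)=1\}$ one has $N_G(\one_A,U_0)=A\subseteq F$. The flow $X_0$ is compact, metrizable, zero-dimensional and minimal, but it need not be free. The naive fix of taking the free minimal extension of Proposition~\ref{prop:free_extension} is not enough, since a minimal subflow of $X_0\times Y$ need not inherit mixing.

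Instead, apply Theorem~\ref{thm:free_strong_mixing_disjoint} to the compact metrizable minimal flow $X_0$. This gives a free, minimal, topologically strongly mixing Cantor flow $Y$ with $X_0\perp Y$. By the proof of Lemma~\ref{lem:relative_sync} (disjointness of minimal flows forces minimality of the product), $Z=X_0\times Y$ is minimal. It is free because the $Y$-coordinate is free. It is compact, metrizable and zero-dimensional, and it has no isolated points, exactly as in the argument for Proposition~\ref{prop:more_equi_dyn_syn_sets}(4). Hence $Z$ is a Cantor space.

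For the mixing property, take non-empty open sets $U_1\times V_1$ and $U_2\times V_2$ in $Z$. Then
\[
N_G(U_1\times V_1,U_2\times V_2)=N_G(U_1,U_2)\cap N_G(V_1,V_2).
\]
In the strongly mixing case both sets on the right are cofinite, so the intersection is cofinite. In the all-orders case, transitivity of $Z^m=X_0^m\times Y^m$ has to be checked on rectangles. Here I would use Lemma~\ref{lem:allorders_self_hitting}(2) together with the fact that strongly mixing $Y$ has cofinite hitting sets. Given finite $H$, bitranslation thickness of $N_G(U_1,U_2)$ applied to an enlarged finite set (as in Lemma~\ref{lem:bt_finite_deletion}) gives a centre $c$ with $HcH$ inside $N_G(U_1,U_2)$ and avoiding the finite complement of $N_G(V_1,V_2)$. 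So $N_G(U_1\times V_1,U_2\times V_2)$ is bitranslation thick, and Lemma~\ref{lem:allorders_self_hitting} gives weak mixing of all finite orders.

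Finally, fix any $y\in Y$ and set $x=(\one_A,y)$ and $U=U_0\times Y$. This $U$ is a clopen neighbourhood of $x$, and $N_G(x,U)=N_G(\one_A,U_0)=A\subseteq F$.

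The main obstacle is the middle step: mixing must survive passage to a free minimal flow. This is exactly why the disjoint strongly mixing partner from Theorem~\ref{thm:free_strong_mixing_disjoint} is used rather than an arbitrary free extension. The one point needing care there is the finite-deletion argument for the all-orders case.
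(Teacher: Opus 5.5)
Your proposal is correct and follows the paper's proof essentially step for step. Both arguments take the binary realization from Theorem~\ref{thm:sm_sets} (respectively Theorem~\ref{thm:wm_allorders}), form the product with the free strongly mixing disjoint partner from Theorem~\ref{thm:free_strong_mixing_disjoint}, and use Lemma~\ref{lem:bt_finite_deletion} for the all-orders case. The only cosmetic difference is that you verify condition (2) of Lemma~\ref{lem:allorders_self_hitting} on pairs of rectangles, whereas the paper checks self-hitting sets and invokes condition (3), using that the minimal product is transitive.
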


\begin{proof}
We first prove the topologically strongly mixing case.  At the end we verify the change needed for weak mixing of all finite orders.

By Theorem~\ref{thm:sm_sets}, there exists a point $x_0\in 2^G$ and 
a minimal topologically strongly mixing subshift $X_0=\overline{Gx_0}\subseteq 2^G$ such that $x_0(e)=1$ and
\[
N_G(x_0,U_0)\subseteq F,
\qquad
U_0:=\{z\in X_0:z(e)=1\}.
\]
In particular, $U_0$ is a clopen neighbourhood of $x_0$.

Apply Theorem~\ref{thm:free_strong_mixing_disjoint} to the compact metrizable minimal $G$-flow $X_0$. 
We obtain a free topologically strongly mixing minimal $G$-flow $Y$ on a Cantor space such that
$X_0\perp Y$.
Define $Z:=X_0\times Y$.
Since $X_0$ and $Y$ are minimal and disjoint, $Z$ is minimal.
Indeed, if $M\subseteq X_0\times Y$ is a minimal subsystem, then the coordinate projections of $M$ are non-empty closed invariant subsets
of $X_0$ and $Y$, respectively, and consequently are all of $X_0$ and $Y$. 
Thus $M$ is a joining of $X_0$ and $Y$. 
Disjointness forces $M=X_0\times Y$.
Now we will show that $Z$ is free.
If $g(x,y)=(x,y)$ for some $(x,y)\in Z$, then $gy=y$, and the freeness of $Y$ produces $g=e$.
It remains to verify that $Z$ is topologically strongly mixing. Let
$O_1,O_2\subseteq Z$ be non-empty open sets. Choose non-empty open rectangles
\[
A_1\times B_1\subseteq O_1, \qquad A_2\times B_2\subseteq O_2.
\]
Then
\[
N_G(A_1\times B_1,A_2\times B_2)
 =N_G(A_1,A_2)\cap N_G(B_1,B_2).
\]
Both sets on the right are cofinite because $X_0$ and $Y$ are topologically strongly mixing. Consequently, their intersection is cofinite. Since
\[
N_G(A_1\times B_1,A_2\times B_2)
 \subseteq N_G(O_1,O_2),
\]
$N_G(O_1,O_2)$ is also cofinite. Therefore $Z$ is topologically strongly mixing.

Fix $y_0\in Y$, and put $z=(x_0,y_0)$ and $W=U_0\times Y$.
Then $W$ is a clopen neighbourhood of $z$, and
\[
\begin{aligned}
N_G(z,W)
&=\{g\in G:(gx_0,gy_0)\in U_0\times Y\}\\
&=\{g\in G:gx_0\in U_0\}=N_G(x_0,U_0)\subseteq F.
\end{aligned}
\]

We show that $Z$ is a Cantor space. 
Indeed, $X_0\subseteq 2^G$ is compact metrizable and zero-dimensional, while $Y$ is a Cantor space.
Consequently $Z=X_0\times Y$ is compact, metrizable, and zero-dimensional. 
It has no isolated points, since every basic neighbourhood $A\times B$ of a point $(x,y)$ contains
$(x,y')\neq(x,y)$ for some $y'\in B$, because $Y$ has no isolated points. 
Thus $Z$ is a Cantor space. 

For weak mixing of all finite orders, choose the minimal binary subshift $X_0$, the point $x_0$, and the identity cylinder $U_0$ using Theorem~\ref{thm:wm_allorders}(3). 
Apply Theorem~\ref{thm:free_strong_mixing_disjoint} to obtain a free strongly mixing Cantor flow $Y$ disjoint from $X_0$. Disjointness and minimality of the factors make $Z=X_0\times Y$ minimal, while freeness follows from the $Y$-coordinate. 
The product is compact metrizable and zero-dimensional, and it has no isolated points because $Y$ has none.
For a non-empty open rectangle $A\times B\subseteq Z$, we have
\[
N_G(A\times B,A\times B)=N_G(A,A)\cap N_G(B,B).
\]
Lemma~\ref{lem:allorders_self_hitting} makes $N_G(A,A)$ bitranslation thick, and strong mixing makes $N_G(B,B)$ cofinite.
Their intersection is bitranslation thick by Lemma~\ref{lem:bt_finite_deletion}. 
Every non-empty open subset of $Z$ contains such a rectangle, so its self-hitting set is bitranslation thick as well. 
Since $Z$ is minimal and hence topologically transitive, Lemma~\ref{lem:allorders_self_hitting}(3) proves weak mixing of all finite orders. 
For any $y_0\in Y$, the clopen neighbourhood $W=U_0\times Y$ of $(x_0,y_0)$ satisfies $N_G((x_0,y_0),W)=N_G(x_0,U_0)\subseteq F$.
\end{proof}

Recall that a subset $F$ of $G$ is right thickly syndetic 
if $\Int_K(F)$ is right syndetic for any finite subset $K$ of $G$. 
If $e\in F\subset G$ is right thickly syndetic,
then $\Int_{K\cup Kg}(F)$ is right syndetic for every finite $K\subseteq F$ and every $g\in G$. 
Thus $\Gamma_F(K)=G$, and Lemma~\ref{lem:interior_vs_product} implies $\Int_K(F)\Int_K(F)^{-1}=G$. 
Assertion (2) of Theorem~\ref{thm:sm_sets} therefore holds for $F$.
Therefore, we have the following consequence.

\begin{corollary}\label{coro:rts_mixing_realization}
 If $e\in F\subset G$ is right thickly syndetic, then  
\begin{enumerate}
    \item there exists $e\in A\subseteq F$ such that $\one_A$ is a minimal point of $2^G$ and $\overline{G\one_A}$ is topologically strongly mixing;
    \item there exists a free, topologically strongly mixing minimal $G$-flow on a Cantor space $X$, a point $x\in X$, and a clopen neighbourhood $U$ of $x$ such that $N_G(x,U) \subseteq F$.
\end{enumerate}
\end{corollary}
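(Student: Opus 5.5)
The plan is to reduce both assertions to results already proved, using the observation just before the statement that a right thickly syndetic $F$ satisfies assertion (2) of Theorem~\ref{thm:sm_sets}. First I verify that observation carefully. Let $K\subseteq F$ be finite and $g\in G$. The set $K\cup Kg$ is a finite subset of $G$, so right thick syndeticity of $F$ makes $\Int_{K\cup Kg}(F)$ right syndetic. Hence $\Gamma_F(K)=G$. Taking $g=e$ also shows that $\Int_K(F)$ is right syndetic for every finite $K\subseteq F$. This is exactly the hypothesis of Lemma~\ref{lem:interior_vs_product} with $A=F$. That lemma gives $\Int_K(F)\Int_K(F)^{-1}=\Gamma_F(K)=G$, which is trivially cofinite. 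Thus $A=F$ witnesses Theorem~\ref{thm:sm_sets}(2), using $e\in F$.

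For assertion (1), I apply the implication (2)$\Rightarrow$(3) of Theorem~\ref{thm:sm_sets}. This yields $e\in A\subseteq F$ such that $\one_A$ is a minimal point of $2^G$ and $\overline{G\one_A}$ is topologically strongly mixing. Alternatively, Proposition~\ref{prop:strong_mixing_realization} can be applied directly, since $\Gamma_F(K)=G$ is cofinite. Either route gives (1).

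For assertion (2), I use the implication (3)$\Rightarrow$(1) of Theorem~\ref{thm:sm_sets}. It provides the topologically strongly mixing minimal flow $X_0=\overline{G\one_A}$, the point $\one_A$, and the identity cylinder $U$ with $N_G(\one_A,U)=A\subseteq F$. I then invoke Proposition~\ref{prop:free_mixing_realization} in its strongly mixing case. This produces a free, topologically strongly mixing minimal flow on a Cantor space, a point, and a clopen neighbourhood whose return-time set lies in $F$.

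No step is a real obstacle, since the content lies in Theorem~\ref{thm:sm_sets} and Theorem~\ref{thm:free_strong_mixing_disjoint} through Proposition~\ref{prop:free_mixing_realization}. The only point needing care is the hypothesis check. The definition of right thick syndeticity quantifies over all finite $K\subseteq G$, not only $K\subseteq F$. This is what allows the translate $Kg$ to leave $F$ while $\Int_{K\cup Kg}(F)$ stays right syndetic.
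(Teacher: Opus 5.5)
Your proposal is correct and matches the paper's argument: right thick syndeticity gives $\Gamma_F(K)=G$ for every finite $K\subseteq F$, Lemma~\ref{lem:interior_vs_product} turns this into $\Int_K(F)\Int_K(F)^{-1}=G$ so that $A=F$ witnesses Theorem~\ref{thm:sm_sets}(2), and then (1) follows from the implication (2)$\Rightarrow$(3), while (2) follows from Proposition~\ref{prop:free_mixing_realization}. Your remark that the definition quantifies over all finite $K\subseteq G$, which is what allows $Kg$ to leave $F$, is exactly the point the paper relies on.
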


\section{Applications to dynamical largeness and disjointness}
\label{sec:disjointness_applications}

This section applies the realization theorems of Sections~\ref{sec:proof} and \ref{sec:mixing_characterizations} to dynamically syndetic sets, dynamically thick sets, sets of pointwise recurrence, and disjointness.

\subsection{Dynamically syndetic sets} 
A set $F\subseteq G$ is \emph{dynamically syndetic} if
$N_G(x,U)\subseteq F$ for some minimal $G$-flow $X$, some point $x\in X$, and some non-empty open set $U\subset X$.  
It is \emph{dynamically central syndetic} if the witness can be chosen with $x\in U$.

\begin{lemma}\label{lem:dynamic_syndetic_translate}
	For $F\subseteq G$, the following are equivalent:
	\begin{enumerate}
		\item $F$ is dynamically syndetic;
		\item $Fg^{-1}$ is dynamically central syndetic for some $g\in G$;
		\item $F=Cg$ for some dynamically central syndetic $C$ and some $g\in G$.
	\end{enumerate}
\end{lemma}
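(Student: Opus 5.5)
The plan is a cycle of implications (1)$\Rightarrow$(2)$\Rightarrow$(3)$\Rightarrow$(1). The only computation needed is how return-time sets transform when the base point moves along its orbit. With the left action, for $x\in X$, $g\in G$ and $U\subseteq X$ one has
\[
N_G(gx,U)=\{h\in G:hgx\in U\}=N_G(x,U)g^{-1},
\]
so right translation of a return-time set corresponds to moving the base point within its orbit.

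\emph{(1)$\Rightarrow$(2).} Suppose $N_G(x,U)\subseteq F$ with $X$ minimal and $U$ non-empty open. By minimality the orbit of $x$ is dense, so there is $g\in G$ with $gx\in U$. Then $U$ is an open neighbourhood of the point $gx$. Moreover
\[
N_G(gx,U)=N_G(x,U)g^{-1}\subseteq Fg^{-1}.
\]
Hence $Fg^{-1}$ is dynamically central syndetic, witnessed by the same minimal flow $X$, the point $gx$, and the set $U$.

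\emph{(2)$\Rightarrow$(3).} Put $C=Fg^{-1}$. Then $F=Cg$, and $C$ is dynamically central syndetic by hypothesis.

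\emph{(3)$\Rightarrow$(1).} Since being dynamically central syndetic is defined by containment, we cannot take $F$ equal to a return-time set; we only need a return-time set inside $F=Cg$. Choose a minimal $X$, a point $x$ and an open $U\ni x$ with $N_G(x,U)\subseteq C$. Set $x'=g^{-1}x$. By the displayed identity,
\[
N_G(x',U)=N_G(x,U)g\subseteq Cg=F.
\]
The set $U$ is non-empty and open, and $X$ is minimal, so $F$ is dynamically syndetic. Here $x'$ need not lie in $U$, which is allowed in the definition of dynamically syndetic.

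There is no real obstacle. The only point requiring care is the direction of translation in the non-abelian setting. The identity $N_G(gx,U)=N_G(x,U)g^{-1}$, obtained from $hgx\in U\iff hg\in N_G(x,U)$, fixes whether $g$ or $g^{-1}$ multiplies on the right. Note that (1)$\Rightarrow$(2) is the one step that uses minimality, to move the base point into $U$.
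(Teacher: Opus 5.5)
Your proof is correct and follows the paper's argument. Both rest on the identity $N_G(gx,U)=N_G(x,U)g^{-1}$: minimality moves the base point into $U$ for (1)$\Rightarrow$(2), and the converse uses the point $g^{-1}x$, with (3) being a restatement of (2).
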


\begin{proof}
	If $N_G(x,U)\subseteq F$, choose $g\in G$ with $gx\in U$.  
	Then
	\[
	N_G(x,U)g^{-1}=N_G(gx,U)\subseteq Fg^{-1}.
	\]
	Conversely, if $N_G(y,V)\subseteq Fg^{-1}$ and $y\in V$, then
	\[
	N_G(y,V)g=N_G(g^{-1}y,V)\subseteq F. \qedhere 
	\]
\end{proof}

\begin{proposition}\label{prop:dynamic_syndetic_interior}
A set $F\subseteq G$ is dynamically syndetic if and only if there is a non-empty $A\subseteq F$ such that
$\Int_K(A)$ is right syndetic for every finite $K\subseteq A$.
\end{proposition}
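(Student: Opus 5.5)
The plan is to reduce the statement to Theorem~\ref{thm:equivalent_characterizations} by right translation, using Lemma~\ref{lem:dynamic_syndetic_translate}. The basic observation is how finite right interiors behave under right translation. For $A\subseteq G$, $g\in G$ and finite $K$, one has $\Int_{Kg^{-1}}(Ag^{-1})=\{t: Kg^{-1}t\subseteq Ag^{-1}\}=\{t:Kg^{-1}tg\subseteq A\}=g\Int_K(A)g^{-1}$. Indeed, $s=g^{-1}tg$ runs over $\Int_K(A)$. Left and right translates of a right syndetic set are right syndetic: if $G=CB$ then $G=GG=(gCg^{-1})(gBg^{-1})$, and for one-sided translates $G=Cg^{-1}\cdot(gB)$ and $G=CBg'$. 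Hence $\Int_K(A)$ is right syndetic for all finite $K\subseteq A$ if and only if $\Int_{K'}(Ag^{-1})$ is right syndetic for all finite $K'\subseteq Ag^{-1}$, since $K\mapsto Kg^{-1}$ is a bijection between the finite subsets of $A$ and those of $Ag^{-1}$.

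\emph{($\Rightarrow$)} Suppose $F$ is dynamically syndetic. By Lemma~\ref{lem:dynamic_syndetic_translate} there is $g$ with $Fg^{-1}$ dynamically central syndetic. Theorem~\ref{thm:equivalent_characterizations}(2) then gives $e\in A'\subseteq Fg^{-1}$ with all finite interiors right syndetic. Put $A=A'g\subseteq F$; it is non-empty because $g\in A$. By the translation identity above, $A$ has the required property.

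\emph{($\Leftarrow$)} Let $\varnothing\ne A\subseteq F$ have the property, and pick $g\in A$. Then $e\in Ag^{-1}$, and by the identity $Ag^{-1}$ satisfies condition (2) of Theorem~\ref{thm:equivalent_characterizations}, witnessed by itself. So $Fg^{-1}\supseteq Ag^{-1}$ is dynamically central syndetic, and Lemma~\ref{lem:dynamic_syndetic_translate}(2)$\Rightarrow$(1) shows that $F$ is dynamically syndetic. Alternatively, apply Proposition~\ref{prop:binary_realization} to $Ag^{-1}$, obtaining a minimal $\one_B$ with $e\in B\subseteq Ag^{-1}$, so that $N_G(g^{-1}\one_B,U)=Bg\subseteq F$ for the identity cylinder $U$.

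The only real check is the conjugation identity together with the fact that conjugates of right syndetic sets remain right syndetic. Both are elementary, so I expect no serious obstacle beyond keeping the multiplication order straight.
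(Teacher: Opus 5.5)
Your proof is correct and follows the paper's argument: you pass to the pointed case with Lemma~\ref{lem:dynamic_syndetic_translate}, apply Theorem~\ref{thm:equivalent_characterizations}, and transfer the finite-interior condition through the conjugation identity $\Int_{Kg^{-1}}(Ag^{-1})=g\Int_K(A)g^{-1}$ together with conjugation invariance of right syndeticity. Your version of the identity has the conjugation on the correct side, whereas the paper's converse direction writes it as $d^{-1}\Int_K(A)d$ where $d\Int_K(A)d^{-1}$ is meant; this slip is harmless because either conjugate of a right syndetic set is right syndetic.
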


\begin{proof}
Suppose first that $F$ is dynamically syndetic. Choose a witness $N_G(x,U)\subseteq F$ and choose $g\in G$ with $gx\in U$. Then $E:=Fg^{-1}$ is dynamically central syndetic by Lemma~\ref{lem:dynamic_syndetic_translate}. By Theorem~\ref{thm:equivalent_characterizations}, there is a non-empty $B\subseteq E$ such that $\Int_L(B)$ is right syndetic for every finite $L\subseteq B$. Put $A:=Bg\subseteq F$. For finite $K\subseteq A$, let $L:=Kg^{-1}\subseteq B$. The identity
\[
\Int_K(A)=g^{-1}\Int_L(B)g
\]
shows that $\Int_K(A)$ is right syndetic, since right syndeticity is invariant under conjugation.

Conversely, suppose that $A\subseteq F$ is non-empty and that every $\Int_K(A)$ is right syndetic for finite $K\subseteq A$. Choose $d\in A$ and put $E:=Ad^{-1}$. For finite $L\subseteq E$, set $K:=Ld\subseteq A$. Then
\[
\Int_L(E)=d^{-1}\Int_K(A)d.
\]
Thus $E$ is dynamically central syndetic by Theorem~\ref{thm:equivalent_characterizations}; Lemma~\ref{lem:dynamic_syndetic_translate} then shows that $F$ is dynamically syndetic.
\end{proof}

\subsection{Dynamically thick sets}

A set $F\subseteq G$ is \emph{dynamically thick} if
\[
F\cap N_G(x,U)\neq\varnothing
\]
for every minimal flow $X$, every $x\in X$, and every non-empty open $U\subset X$.

Dynamically thick sets are upward hereditary. Their duality with dynamically syndetic sets will turn the finite-interior characterization into a complement criterion.

\begin{lemma}\label{lem:dynamic_thick_dual}
	For $F\subseteq G$, the following are equivalent:
	\begin{enumerate}
		\item $F$ is dynamically thick;
		\item $F$ intersects every dynamically syndetic subset of $G$;
		\item $G\setminus F$ is not dynamically syndetic.
	\end{enumerate}
\end{lemma}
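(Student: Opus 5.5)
I will prove the cycle (1)$\Rightarrow$(2)$\Rightarrow$(3)$\Rightarrow$(1). All three implications are essentially definitional, so the main work is bookkeeping the quantifiers.

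For (1)$\Rightarrow$(2), let $F$ be dynamically thick and let $E\subseteq G$ be dynamically syndetic. By definition there are a minimal $G$-flow $X$, a point $x\in X$ and a non-empty open $U\subseteq X$ with $N_G(x,U)\subseteq E$. Dynamical thickness gives $F\cap N_G(x,U)\neq\varnothing$, hence $F\cap E\neq\varnothing$.

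For (2)$\Rightarrow$(3), suppose $G\setminus F$ were dynamically syndetic. Then (2) applied to $E=G\setminus F$ gives $F\cap(G\setminus F)\neq\varnothing$, which is absurd.

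For (3)$\Rightarrow$(1), argue by contraposition. If $F$ is not dynamically thick, there are a minimal flow $X$, a point $x\in X$ and a non-empty open $U$ with $F\cap N_G(x,U)=\varnothing$. Then $N_G(x,U)\subseteq G\setminus F$, so the same triple witnesses that $G\setminus F$ is dynamically syndetic, contradicting (3).

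I expect no real obstacle. The only point to check is that the class of dynamically syndetic sets is upward closed, which is immediate from the definition via containment $N_G(x,U)\subseteq F$; this upward closure is what makes the duality between (2) and (3) exact. Empty or degenerate cases need no special treatment, since a minimal flow always has non-empty return sets $N_G(x,U)$ by Lemma~\ref{lem:minimality}.
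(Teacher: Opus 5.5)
Your proof is correct: the three implications are exactly the definitional unwinding, and the paper states this lemma without proof for that reason. Your closing remark about upward closure is never used, since each implication works directly with the witnessing set $N_G(x,U)$.
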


Dynamically thick sets were called dense orbit sets in~\cite{GTWZ21} by Glasner et al. 
They also asked the following question:

\begin{question}[{\cite[Question 9.6]{GTWZ21}}]
	Characterize the dynamically thick sets in countable, discrete groups.
\end{question} 

Glasscock and Le provided combinatorial characterizations of dynamically thick sets in~\cite[Theorems A and B]{GL26} for $\mathbb{N}$-actions.
We generalize the characterizations to countably infinite group actions.

\begin{theorem}\label{thm:dynamic_thick_characterization}
	For $F\subseteq G$, the following are equivalent:
	\begin{enumerate}
		\item $F$ is dynamically thick;
		\item for every $F\subseteq B\subsetneq G$, there is a finite
		$K\subseteq G\setminus B$ such that $K^{-1}B$ is right thick.
	\end{enumerate}
\end{theorem}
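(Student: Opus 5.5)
The plan is to combine the duality in Lemma~\ref{lem:dynamic_thick_dual} with the finite-interior characterization of dynamically syndetic sets in Proposition~\ref{prop:dynamic_syndetic_interior}. By Lemma~\ref{lem:dynamic_thick_dual}, $F$ is dynamically thick if and only if $G\setminus F$ is not dynamically syndetic. Every dynamically syndetic set contains a non-empty witness $A$ whose finite interiors $\Int_K(A)$ are all right syndetic, and conversely. So $F$ fails to be dynamically thick exactly when some non-empty $A\subseteq G\setminus F$ has every $\Int_K(A)$, $K\subseteq A$ finite, right syndetic. Writing $B:=G\setminus A$, this means $F\subseteq B\subsetneq G$ and $\Int_K(G\setminus B)$ is right syndetic for every finite $K\subseteq G\setminus B$.

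The remaining step is to translate ``$\Int_K(G\setminus B)$ is not right syndetic'' into ``$K^{-1}B$ is right thick''. Since $t\notin\Int_K(G\setminus B)$ iff $Kt\cap B\neq\varnothing$ iff $t\in K^{-1}B$, we have
\[
G\setminus\Int_K(G\setminus B)=K^{-1}B .
\]
By the preliminary remark that a set is not right syndetic iff its complement is right thick, $\Int_K(G\setminus B)$ fails to be right syndetic iff $K^{-1}B$ is right thick.

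With this identity, (1)$\Rightarrow$(2) goes by contradiction. Suppose there is $B$ with $F\subseteq B\subsetneq G$ such that $K^{-1}B$ is not right thick for every finite $K\subseteq G\setminus B$. Then $A=G\setminus B$ is non-empty, lies in $G\setminus F$, and satisfies the finite-interior condition. Proposition~\ref{prop:dynamic_syndetic_interior} then makes $G\setminus F$ dynamically syndetic, so $F$ is not dynamically thick. For (2)$\Rightarrow$(1), suppose $F$ is not dynamically thick. The same proposition gives a non-empty $A\subseteq G\setminus F$ with every $\Int_K(A)$ right syndetic. Set $B=G\setminus A$, so $F\subseteq B\subsetneq G$. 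Condition (2) yields a finite $K\subseteq A$ with $K^{-1}B$ right thick, hence $\Int_K(A)$ is not right syndetic, a contradiction.

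One degenerate case needs checking: $K=\varnothing$. Then $K^{-1}B=\varnothing$ is not right thick, and $\Int_\varnothing(A)=G$ is right syndetic, so the identity stays consistent. The main (mild) obstacle is keeping the left/right conventions straight in $G\setminus\Int_K(A)=K^{-1}(G\setminus A)$, and making sure the ``non-empty'' requirement in Proposition~\ref{prop:dynamic_syndetic_interior} matches the strict inclusion $B\subsetneq G$. The substantive content is already contained in Theorem~\ref{thm:equivalent_characterizations}, through that proposition.
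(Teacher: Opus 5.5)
Your proof is correct and follows the paper's own argument. The paper likewise uses Lemma~\ref{lem:dynamic_thick_dual} and Proposition~\ref{prop:dynamic_syndetic_interior} to reduce dynamical thickness of $F$ to the failure of the finite-interior condition for every non-empty $D\subseteq G\setminus F$, and then passes to (2) with $D=G\setminus B$ via the identity $G\setminus\Int_K(D)=K^{-1}B$ and the fact that a set fails to be right syndetic exactly when its complement is right thick; your version simply writes out both directions explicitly, including the check that non-emptiness of the witness corresponds to $B\subsetneq G$.
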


\begin{proof}
	Let $C=G\setminus F$.  
	By Lemma~\ref{lem:dynamic_thick_dual} and Proposition~\ref{prop:dynamic_syndetic_interior}, dynamical thickness is equivalent to the assertion that every non-empty $D\subseteq C$ has a finite $K\subseteq D$ for which $\Int_K(D)$ is not right syndetic.  
	Given $F\subseteq B\subsetneq G$, take $D=G\setminus B$. 
	The complement identity $G\setminus\Int_K(D)=K^{-1}B$ and the equivalence between failure of right syndeticity and thickness of the complement show (2).  
	
	The converse follows by taking $B=G\setminus D$ whenever the finite-interior condition fails.
\end{proof}

Applying Theorem~\ref{thm:dynamic_thick_characterization}, we have the following immediate consequence.
\begin{corollary}
Every dynamically thick subset of $G$ is piecewise syndetic.
\end{corollary}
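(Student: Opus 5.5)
The statement to prove is the corollary: every dynamically thick subset of $G$ is piecewise syndetic. The plan is to derive it directly from Theorem~\ref{thm:dynamic_thick_characterization}(2) by a single well-chosen set $B$.

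\textbf{Degenerate case.} Let $F$ be dynamically thick. If $F=G$, then $F$ is trivially right piecewise syndetic: with $K=\{e\}$, the set $K^{-1}F=G$ is right thick.

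\textbf{Main case.} Suppose $F\ne G$ and take $B=F$ in Theorem~\ref{thm:dynamic_thick_characterization}(2), which is allowed since $F\subseteq F\subsetneq G$. The theorem gives a finite $K\subseteq G\setminus F$ such that $K^{-1}F$ is right thick. By the definition in Section~\ref{sec:preliminaries}, a set $A$ is right piecewise syndetic when $K^{-1}A$ is right thick for some finite $K\subseteq G$. So $F$ is right piecewise syndetic at once; the extra information $K\subseteq G\setminus F$ is not needed.

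\textbf{Remaining check.} One must make sure $K$ is non-empty, since $K=\varnothing$ would make $K^{-1}F$ empty. An empty set cannot be right thick: the defining condition applied to the finite set $\{e\}$ requires some $g$ with $\{e\}g\subseteq\varnothing$, which is impossible. Hence $K\ne\varnothing$ automatically, and the argument closes.

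The only delicate points are therefore the case $F=G$ and the non-emptiness of $K$, and both are routine. As a cross-check, the same conclusion also follows from the duality in Section~\ref{sec:preliminaries}, namely that $A$ is piecewise syndetic if and only if it meets every right thickly syndetic set. Corollary~\ref{coro:rts_mixing_realization} shows that each right thickly syndetic set $E$ with $e\in E$ is dynamically central syndetic. For a general right thickly syndetic set one would first translate it to contain $e$, using Lemma~\ref{lem:dynamic_syndetic_translate}, to see that it is dynamically syndetic and therefore meets $F$. The direct argument above is shorter, so that is the one to write up.
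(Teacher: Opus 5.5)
Your proof is correct and is exactly the intended argument: the paper presents the corollary as an immediate consequence of Theorem~\ref{thm:dynamic_thick_characterization}, namely taking $B=F$ in (2) to obtain a finite $K$ with $K^{-1}F$ right thick. Your separate treatment of $F=G$ is needed because (2) requires $B\subsetneq G$, and it covers the one case the paper leaves implicit.
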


\subsection{Sets of pointwise recurrence}

A set $F\subseteq G$ is a \emph{set of pointwise recurrence} if
\[
F\cap\bigl(N_G(x,U)\setminus\{e\}\bigr)\neq\varnothing
\]
for every minimal $G$-flow $X$, every $x\in X$, and every neighbourhood $U$ of $x$.
Equivalently, $F$ intersects $S\setminus\{e\}$ for every dynamically central syndetic set $S$. 
Thus membership of $e$ in $F$ has no effect on whether $F$ is a set of pointwise recurrence. 
In the structural results below, we take the recurrence sets to be subsets of $G\setminus\{e\}$.

\begin{theorem}
	\label{thm:pointwise_recurrence_characterization}
	Let $A\subseteq G\setminus\{e\}$ and $C=G\setminus A$. 
	The following are equivalent:
	\begin{enumerate}
		\item $A$ is a set of pointwise recurrence;
		\item for every $D$ with $e\in D\subseteq C$, there is a finite
		$K\subseteq D$ such that $\Int_K(D)$ is not right syndetic;
		\item for every $B\supseteq A$ with $e\notin B$, there is a finite
		$K\subseteq G\setminus B$ such that $B\cup K^{-1}B$ is right thick.
	\end{enumerate}
\end{theorem}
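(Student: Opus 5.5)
The plan is to follow the proof of Theorem~\ref{thm:dynamic_thick_characterization}: use Theorem~\ref{thm:equivalent_characterizations} to turn (1) into the finite-interior statement (2), and then pass to (3) by taking complements.

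\emph{(1)$\Leftrightarrow$(2).} As observed before the statement, $A$ is a set of pointwise recurrence exactly when $A\cap(S\setminus\{e\})\neq\varnothing$ for every dynamically central syndetic $S$. Since $e\notin A$, this says that no dynamically central syndetic set is contained in $C=G\setminus A$. By Theorem~\ref{thm:equivalent_characterizations}, (1)$\Leftrightarrow$(2), a set $S\subseteq C$ with $e\in S$ is dynamically central syndetic iff some $D$ with $e\in D\subseteq S$ has $\Int_K(D)$ right syndetic for all finite $K\subseteq D$. Also, every $D$ with $e\in D\subseteq C$ is a candidate $S$, because $e\in C$. Hence $A$ fails (1) iff some $D$ with $e\in D\subseteq C$ has all finite interiors right syndetic. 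This is exactly the negation of (2).

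\emph{(2)$\Leftrightarrow$(3).} The sets $D$ with $e\in D\subseteq C$ correspond bijectively to the sets $B=G\setminus D$ with $B\supseteq A$ and $e\notin B$. For such a pair and finite $K\subseteq D$,
\[
G\setminus\Int_K(D)=\{t: Kt\not\subseteq D\}=\{t:\exists k\in K,\ kt\in B\}=K^{-1}B .
\]
Since a set is non-right-syndetic iff its complement is right thick, the condition in (2) becomes: there is a finite $K\subseteq G\setminus B$ with $K^{-1}B$ right thick.

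It remains to match the form $B\cup K^{-1}B$ in (3). If $K^{-1}B$ is right thick, then so is its superset $B\cup K^{-1}B$. Conversely, if $B\cup K^{-1}B$ is right thick, put $K'=K\cup\{e\}$, which is still a finite subset of $G\setminus B=D$ because $e\in D$. Then
\[
K'^{-1}B=B\cup K^{-1}B,
\]
so $K'$ is the required witness in (2). This matching is the only point needing care: the extra ``$B\cup$'' in (3) simply records that $e$ may be adjoined to $K$ freely, because $e\notin B$.

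No step is a real obstacle. The only care needed is the bookkeeping of $e$: every $D$ contains $e$, and the case $K=\varnothing$ gives $\Int_\varnothing(D)=G$, which is right syndetic, so it never causes trouble.
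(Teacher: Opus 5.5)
Your proof is correct and follows essentially the same route as the paper. You obtain (1)$\Leftrightarrow$(2) by applying Theorem~\ref{thm:equivalent_characterizations} to the complement $C$, using $e\notin A$. You obtain (2)$\Leftrightarrow$(3) from the complement identity $G\setminus\Int_K(D)=K^{-1}B$ together with adjoining $e$ to $K$; this is exactly the paper's identity $G\setminus(B\cup K^{-1}B)=\Int_{K\cup\{e\}}(D)$.
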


\begin{proof}
	The equivalence of (1) and (2) follows from Theorem~\ref{thm:equivalent_characterizations}(1)$\Leftrightarrow$(2) applied to the complement $C$.
	Indeed, failure of (1) gives a minimal flow, a point $x$, and a neighbourhood $U$ of $x$ such that
	\[
	A\cap\bigl(N_G(x,U)\setminus\{e\}\bigr)=\varnothing.
	\]
	Since $A\subseteq G\setminus\{e\}$, this says $N_G(x,U)\subseteq C$. 
	Because $U$ is a neighbourhood of $x$, we have $e\in N_G(x,U)$, so $C$ is dynamically central syndetic. 
	Conversely, if $C$ is dynamically central syndetic, it contains a return-time set $N_G(x,U)$ with $x\in U$, and then $A$ misses $N_G(x,U)\setminus\{e\}$.
	
	For (2)$\implies$(3), take $D=G\setminus B$. 
	Then $e\in D\subseteq C$, and the complement identity
	\[
	G\setminus\Int_K(D)=K^{-1}B
	\]
	together with the equivalence between failure of right syndeticity and thickness of the complement shows that $K^{-1}B$, 
	and consequently $B\cup K^{-1}B$, is right thick.
	
	Conversely, if (2) fails, choose $e\in D\subseteq C$ such that $\Int_K(D)$ is right syndetic for every finite $K\subseteq D$. 
	Put $B=G\setminus D$. Then $A\subseteq B$, $e\notin B$, and
	\[
	G\setminus(B\cup K^{-1}B)=D\cap\Int_K(D)=\Int_{K\cup\{e\}}(D)
	\]
	is right syndetic for every finite $K\subseteq D$. Hence $B\cup K^{-1}B$ is not right thick, contradicting (3).
\end{proof}

\begin{proposition}
	Every set of pointwise recurrence $A\subseteq G\setminus\{e\}$ is right piecewise syndetic.
\end{proposition}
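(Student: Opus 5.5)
The plan is to argue by contradiction and reduce to the duality between right piecewise syndetic and right thickly syndetic sets recalled in the preliminaries: $A$ is right piecewise syndetic if and only if $A$ meets every right thickly syndetic subset of $G$ (see \cite[Lemma~2.2]{XY24}). So suppose $A\subseteq G\setminus\{e\}$ is a set of pointwise recurrence but is not right piecewise syndetic. Then there is a right thickly syndetic set $T\subseteq G$ with $A\cap T=\varnothing$. We will show that $F:=T\cup\{e\}$ is dynamically central syndetic and disjoint from $A$; this contradicts Theorem~\ref{thm:pointwise_recurrence_characterization}, or directly the definition.

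The first step is to check that $F=T\cup\{e\}$ is right thickly syndetic. For finite $K\subseteq G$ we have $\Int_K(F)\supseteq\Int_K(T)$, and the latter is right syndetic. Since $e\in F$, the remark preceding Corollary~\ref{coro:rts_mixing_realization} applies. Alternatively, condition (2) of Theorem~\ref{thm:equivalent_characterizations} holds with the witness $A:=F$, because $\Int_K(F)$ is right syndetic for every finite $K$, in particular for finite $K\subseteq F$. Hence $F$ is dynamically central syndetic, so there exist a minimal flow $X$, a point $x\in X$ and a neighbourhood $U\ni x$ with $N_G(x,U)\subseteq F$.

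The conclusion is then immediate. Since $e\notin A$ and $A\cap T=\varnothing$, we get $A\cap\bigl(N_G(x,U)\setminus\{e\}\bigr)\subseteq A\cap T=\varnothing$, contradicting pointwise recurrence. The only step needing care is the invocation of the duality lemma with the correct (right) conventions. Piecewise syndeticity is defined via $K^{-1}A$ being right thick, while thick syndeticity is defined via $\Int_K$. The complement identity $G\setminus\Int_K(D)=K^{-1}(G\setminus D)$ makes these exactly dual, so the cited lemma applies verbatim. If one prefers not to cite it, I would reprove the needed direction directly. If $A$ is not piecewise syndetic, then for each finite $K$ the set $K^{-1}A$ is not right thick, so its complement $\Int_K(G\setminus A)$ is right syndetic. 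Hence $T:=G\setminus A$ itself is right thickly syndetic, and the argument above runs with this $T$.
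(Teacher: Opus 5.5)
Your proof is correct and follows the paper's argument: you use the duality between right piecewise syndetic and right thickly syndetic sets, realize $T\cup\{e\}$ as containing a pointed return-time set $N_G(x,U)$ of a minimal flow, and use $e\notin A$ to conclude that $A$ must meet $T$. The differences are minor: you argue by contradiction where the paper argues directly, and you note that Theorem~\ref{thm:equivalent_characterizations}(2) already suffices, which is a small economy over the paper's appeal to Corollary~\ref{coro:rts_mixing_realization}, since no mixing is needed here.
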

\begin{proof}
	Let $A\subseteq G\setminus\{e\}$ be a set of pointwise recurrence. 
	For any right thickly syndetic set $B\subseteq G$, the set $B\cup\{e\}$ is right thickly syndetic and contains $e$. 
	By Corollary~\ref{coro:rts_mixing_realization}, there exist a minimal $G$-flow $X$, a point $x\in X$, and a neighbourhood $U$ of $x$ such that
	\[
	N_G(x,U)\subseteq B\cup\{e\}.
	\]
	Pointwise recurrence gives
	\[
	A\cap\bigl(N_G(x,U)\setminus\{e\}\bigr)\neq\varnothing.
	\]
	Since $e\notin A$, it follows that $A\cap B\neq\varnothing$. Thus $A$ is right piecewise syndetic.
\end{proof}

Recall that a collection is partition regular if, whenever $A$ belongs to the collection and $A=B\cup C$, at least one of $B$ and $C$ belongs to the collection.
In~\cite{HKM16}, Host, Kra and Maass asked the following question:

\begin{question}[{\cite[Question 6.5]{HKM16}}]
	Is the family of sets of pointwise recurrence partition regular?
\end{question}

Glasscock and Le gave a negative answer to this question in~\cite[Theorem D]{GL24} for $\mathbb{N}$-actions. We show that this question has a negative answer for all countably infinite group actions.

\begin{theorem}\label{thm:split_pointwise_recurrence}
	Every set of pointwise recurrence $A\subseteq G\setminus\{e\}$ has a partition
	$A=A_1\sqcup A_2$ in which neither $A_1$ nor $A_2$ is a set of pointwise recurrence.
\end{theorem}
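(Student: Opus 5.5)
We need to split $A$ so that each piece misses $N_G(x_i,U_i)\setminus\{e\}$ for some minimal flow, point $x_i$ and neighbourhood $U_i$. By the definition, $A_i$ fails to be a set of pointwise recurrence exactly when $G\setminus A_i$ contains a dynamically central syndetic set. So we look for two dynamically central syndetic sets $S_1,S_2$ with
\[
(S_1\setminus\{e\})\cap(S_2\setminus\{e\})=\varnothing .
\]
Given these, we put $A_1=A\setminus S_1$ and $A_2=A\cap S_1$. Then $A_1\cap(S_1\setminus\{e\})=\varnothing$. Also $A_2\subseteq S_1\setminus\{e\}$ because $e\notin A$, so $A_2$ is disjoint from $S_2\setminus\{e\}$. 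Hence neither piece is a set of pointwise recurrence. The hypothesis that $A$ is a set of pointwise recurrence is not needed; the construction works for every $A\subseteq G\setminus\{e\}$.

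To produce $S_1$ and $S_2$, we apply Corollary~\ref{cor:two_piece_dcs} to $F=G$. The set $G$ is trivially dynamically central syndetic: take any minimal flow and $U=X$. The corollary supplies disjoint $F_1,F_2\subseteq G$ with $S_i=F_i\cup\{e\}$ dynamically central syndetic. Then $S_1\setminus\{e\}\subseteq F_1$ and $S_2\setminus\{e\}\subseteq F_2$ are disjoint, which is exactly the property required.

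The nontrivial input is the content of Corollary~\ref{cor:two_piece_dcs}, which we take as given. It places two interleaved clopen families $U_1,U_2$ accumulating at $x$ in a free minimal Cantor flow, with $x\in\overline{U_i}$ and $x\notin U_i$. It then converts these into central witnesses via Proposition~\ref{prop:more_equi_dyn_syn_sets}(3). The step that needs care is the definitional one: a witness $N_G(x,U)\subseteq S_i$ with $x\in U$ must really show that $A_i$ misses $N_G(x,U)\setminus\{e\}$. This holds because $A_i\cap S_i\subseteq\{e\}$, and $e\notin A_i$ since $e\notin A$. The exclusion of $e$ is the main point to check, since $e$ lies in every central return-time set; it is handled by the assumption $A\subseteq G\setminus\{e\}$.
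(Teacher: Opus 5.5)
Your proposal is correct and follows the paper's proof. Both apply Corollary~\ref{cor:two_piece_dcs} to $G$ and split $A$ along $F_1$; your $A_1,A_2$ are the paper's $A_2,A_1$, since $e\notin A$ gives $A\cap S_1=A\cap F_1$. Your remark that the pointwise-recurrence hypothesis on $A$ is never used is also accurate for the paper's argument.
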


\begin{proof}
	Apply Corollary~\ref{cor:two_piece_dcs} to $G$, which is dynamically central syndetic, as witnessed by the one-point flow. 
	Choose disjoint $F_1,F_2\subseteq G$ such that $F_1\cup\{e\}$ and $F_2\cup\{e\}$ are dynamically central syndetic. 
	Put $A_1=A\cap F_1$ and $A_2=A\setminus F_1$. 
	Then $A=A_1\sqcup A_2$, and
	\[
	A_1\cap F_2=\varnothing\quad\text{and}\quad A_2\cap F_1=\varnothing.
	\]
	Because $e\notin A_1\cup A_2$, each piece misses the punctured part of one of the dynamically central syndetic sets $F_2\cup\{e\}$ and $F_1\cup\{e\}$. 
	Neither piece is therefore a set of pointwise recurrence.
\end{proof}

\begin{remark}\label{rem:finite_deletion_largeness}
	The finite-deletion conclusion of Corollary~\ref{cor:finite_deletion_dcs} extends to dynamically syndetic sets: if $F\subseteq G$ is dynamically syndetic and $K\subseteq G$ is finite, then $F\setminus K$ is dynamically syndetic.
	Indeed, choose a witnessing return-time set $N_G(x,U)\subseteq F$ and then choose $g\in N_G(x,U)\setminus K$, which is possible because return-time sets in minimal flows are syndetic and hence infinite. 
	By Lemma~\ref{lem:dynamic_syndetic_translate}, $Fg^{-1}$ is dynamically central syndetic. 
	Since $e\notin Kg^{-1}$, Corollary~\ref{cor:finite_deletion_dcs} shows that
	\[
	(F\setminus K)g^{-1}=Fg^{-1}\setminus Kg^{-1}
	\]
	is dynamically central syndetic, and Lemma~\ref{lem:dynamic_syndetic_translate} gives the claim.
	
	Consequently, dynamically thick sets are also preserved under finite deletion. 
	If $F$ is dynamically thick, then for every dynamically syndetic $D$ the set $D\setminus K$ is dynamically syndetic, so $F\cap(D\setminus K)\neq\varnothing$. 
	Hence $F\setminus K$ intersects every dynamically syndetic set and is dynamically thick by Lemma~\ref{lem:dynamic_thick_dual}.
	
	Finally, if $A$ is a set of pointwise recurrence and $K\subseteq G$ is finite, then $A\setminus K$ is still a set of pointwise recurrence. 
	For any dynamically central syndetic set $S$, apply Corollary~\ref{cor:finite_deletion_dcs} to $K\setminus\{e\}$ to see that $S\setminus(K\setminus\{e\})$ remains dynamically central syndetic. 
	Pointwise recurrence of $A$ therefore gives some
	\[
	a\in A\cap\bigl(S\setminus(K\setminus\{e\})\bigr)\setminus\{e\},
	\]
	and this $a$ belongs to $(A\setminus K)\cap(S\setminus\{e\})$. 
	Thus $A\setminus K$ is pointwise recurrent.
\end{remark}

\subsection{Disjointness}
The preceding recurrence tests concern intersections with return-time sets of minimal points. We now use simultaneous returns in product flows to express disjointness through these largeness conditions.

\begin{lemma}\label{lem:product_return_disjointness}
	Let $X$ and $Y$ be $G$-flows.  
	Assume that the flows are disjoint, and let $x\in X$ and $y\in Y$ be transitive points.
	Then
	\[
	\overline{G(x,y)}=X\times Y.
	\]
	Consequently, for every non-empty open $U\subseteq X$ and $V\subseteq Y$,
	\begin{equation*}
		N_G(x,U)\cap N_G(y,V)\neq\varnothing.
	\end{equation*}
\end{lemma}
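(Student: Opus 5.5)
The plan is to show that the orbit closure $Z:=\overline{G(x,y)}$ is a joining of $X$ and $Y$, so that disjointness forces $Z=X\times Y$. The return-time statement then follows by intersecting the dense orbit with an open rectangle.

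First, $Z$ is non-empty, closed and invariant in $X\times Y$. It is invariant because the closure of an invariant set under a continuous action is invariant.

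Next, let $\pi_X\colon X\times Y\to X$ be the coordinate projection. It is continuous and equivariant, so $\pi_X(Z)$ is compact, hence closed since $X$ is Hausdorff. It is also invariant and contains $x$. Therefore $\pi_X(Z)\supseteq\overline{Gx}=X$, where the equality holds because $x$ is a transitive point. The same argument with $\pi_Y$ and the transitive point $y$ gives $\pi_Y(Z)=Y$.

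Thus $Z$ is a non-empty closed invariant subset of $X\times Y$ projecting onto both factors, i.e.\ a joining. Since $X\perp Y$, the only joining is $X\times Y$, so $\overline{G(x,y)}=X\times Y$.

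For the consequence, fix non-empty open $U\subseteq X$ and $V\subseteq Y$. Then $U\times V$ is a non-empty open subset of $X\times Y=\overline{G(x,y)}$, so some $g\in G$ satisfies $g(x,y)=(gx,gy)\in U\times V$. Hence $g\in N_G(x,U)\cap N_G(y,V)$, and this intersection is non-empty.

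There is no genuine obstacle here. The only point needing care is the closedness of the projections, which uses compactness of $Z$ together with the Hausdorff property; this is the same argument as in the proof of Proposition~\ref{prop:free_extension}. No minimality hypothesis is needed, only transitivity of the two points.
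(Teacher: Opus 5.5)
Your proof is correct and follows essentially the same route as the paper: you show that $\overline{G(x,y)}$ is a joining because transitivity of $x$ and $y$ makes both projections onto, conclude from disjointness that it is all of $X\times Y$, and read off the return-time intersection by meeting $U\times V$. You spell out the closedness and invariance of the projections a little more explicitly than the paper does, but the argument is the same.
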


\begin{proof}
	The set $J=\overline{G(x,y)}$ is a non-empty closed invariant subset of 	$X\times Y$.  
	Transitivity of $x$ and $y$ makes both coordinate projections of $J$ onto, so $J$ is a joining.  
	Disjointness produces $J=X\times Y$.
	Meeting $U\times V$ is exactly the displayed intersection.
\end{proof}

Using Lemma~\ref{lem:product_return_disjointness}, we characterize when the flow $X$ containing a transitive point is disjoint from every minimal flow in terms of its return-time sets.

\begin{proposition}\label{prop:dcs_disjointness_test}
	Let $X$ be a $G$-flow and let $x\in X$ be a transitive point.
	Then the following assertions are equivalent:
	\begin{enumerate}
		\item $X$ is disjoint from any minimal 
		$G$-flow.
		\item $N_G(x,U)$ is a set of pointwise recurrence for any non-empty open $U\subset X$.
		\item $N_G(x,U)$ is dynamically thick for any neighbourhood $U$ of $x$. 
	\end{enumerate}
\end{proposition}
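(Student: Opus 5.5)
I plan to prove the cycle (1)$\Rightarrow$(2)$\Rightarrow$(3)$\Rightarrow$(1), using Lemma~\ref{lem:product_return_disjointness} for the first arrow and the realization results of Section~\ref{sec:proof} for the return.

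For (1)$\Rightarrow$(2), fix a non-empty open $U\subseteq X$, a minimal flow $Y$, $y\in Y$ and a neighbourhood $V$ of $y$. I want some $g\neq e$ with $gx\in U$ and $gy\in V$. Since $y$ is a transitive point of the minimal flow $Y$ and $X\perp Y$, Lemma~\ref{lem:product_return_disjointness} gives $\overline{G(x,y)}=X\times Y$, so the orbit meets $U\times V$. To force $g\neq e$, I would argue in two cases. If $X$ has no isolated point, the set $U\setminus\{x\}$ is non-empty and open, and I apply the lemma to $(U\setminus\{x\})\times V$. If $X$ has an isolated point, then disjointness from the trivial flow still holds, so I must argue directly. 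An alternative is to replace $Y$ by a free minimal extension via Proposition~\ref{prop:free_extension}, with the lift $z$ of $y$; return sets are preserved by \eqref{eq:return_lift}. Then I choose a neighbourhood $V'\subseteq\pi^{-1}(V)$ of $z$ with $V'\cap g_0V'=\varnothing$ for a chosen element $g_0$. This still does not exclude $e$ itself. The simplest route is to use $X\perp(Y\times Y')$, or to note that $N_G(x,U)$ is infinite in the non-isolated case and handle the isolated case separately. If $x$ is isolated, transitivity and disjointness from nontrivial minimal flows force $X$ to be a single periodic-like orbit, which contradicts disjointness from, e.g., a nontrivial minimal equicontinuous or Bernoulli minimal flow. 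I expect this degenerate bookkeeping to be the main annoyance.

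For (2)$\Rightarrow$(3), let $U$ be a neighbourhood of $x$ and $S$ a dynamically syndetic set. By Lemma~\ref{lem:dynamic_syndetic_translate}, $Sg^{-1}=C$ is dynamically central syndetic for some $g$. I want $N_G(x,U)\cap S\neq\varnothing$, that is, $N_G(x,U)g^{-1}\cap C\neq\varnothing$. Since $x$ is transitive, $N_G(x,U)g^{-1}=N_G(gx,U)$ contains $N_G(x,U')$ for the non-empty open set $U'=g^{-1}U$ written appropriately. Precisely, $N_G(x,U)g^{-1}=\{h:hgx\in U\}=N_G(gx,U)$, and using transitivity I pick $a$ with $ax$ close to $gx$, reducing to a return set of $x$ itself up to translation. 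Pointwise recurrence of that return set then meets $C\setminus\{e\}$.

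For (3)$\Rightarrow$(1), let $Y$ be minimal and $J\subseteq X\times Y$ a joining, and suppose $J\neq X\times Y$. Pick $y$ with $(x,y)\in J$; by minimality of $Y$ and closedness, $\overline{G(x,y)}\subseteq J$. Choose a rectangle $U\times V$ missing $J$; by transitivity I arrange $x\in U$ after translating. Then $N_G(x,U)\cap N_G(y,V)=\varnothing$, contradicting dynamical thickness.
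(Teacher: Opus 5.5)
\textbf{Gap 1: the isolated case of (1)$\Rightarrow$(2).} Your (3)$\Rightarrow$(1) is correct and is the paper's argument in contrapositive form. Your non-isolated case of (1)$\Rightarrow$(2) also matches the paper. The gap is the case where $x$ is isolated. The contradiction you propose does not exist: the one-point flow has an isolated transitive point and is disjoint from every $G$-flow. So this case has to be proved, not excluded.

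The paper handles it as follows. Put $H=\{g\in G:gx=x\}$. Disjointness from an infinite (e.g.\ free) minimal flow $Z$, together with Lemma~\ref{lem:product_return_disjointness} applied to the open sets $\{x\}\times O$, makes $Hz$ dense in $Z$, so $H$ is infinite. If $y$ is not isolated, density applied to $\{x\}\times(V\setminus\{y\})$ gives $t\neq e$. If $y$ is isolated, then $Y$ is finite and $Hy=Y$. The stabilizer of $y$ in $H$ then has finite index in the infinite group $H$, so it contains some $t\neq e$, and this $t$ satisfies $tx=x\in U$, $ty=y\in V$.

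Your free-extension idea would in fact settle all cases at once, provided you use an open set \emph{avoiding} $z$ rather than a neighbourhood of $z$. The extension $\pi\colon Z\to Y$ of Proposition~\ref{prop:free_extension} is free, hence infinite, hence (being minimal) has no isolated points. So $O=\pi^{-1}(V)\setminus\{z\}$ is non-empty and open. Since $X\perp Z$ by (1), density of $G(x,z)$ gives $t$ with $tx\in U$ and $tz\in O$. Then $t\neq e$ and $ty=\pi(tz)\in V$.

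\textbf{Gap 2: the translation step in (2)$\Rightarrow$(3).} Your key identification fails for non-abelian $G$. The set $N_G(x,g^{-1}U)=g^{-1}N_G(x,U)$ is a \emph{left} translate. By contrast, $N_G(x,U)g^{-1}=N_G(gx,U)=g\,N_G(x,g^{-1}U)\,g^{-1}$ is a conjugate of one, and choosing $a$ with $ax$ close to $gx$ does not change this. Hypothesis (2) only concerns return sets of $x$, and pointwise recurrence is not translation invariant. For example, in $\mathbb{Z}$ the set $2\mathbb{Z}\setminus\{0\}$ is a set of pointwise recurrence, while its translate by $1$ fails on the rotation of $\mathbb{Z}/2\mathbb{Z}$. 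So this step is unjustified as written.

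You can repair it by noting that dynamically central syndetic sets are conjugation invariant: $g^{-1}N_G(w,V')g=N_G(g^{-1}w,g^{-1}V')$ with $g^{-1}w\in g^{-1}V'$. Hence $N_G(x,g^{-1}U)$ meets $g^{-1}Cg\setminus\{e\}$, which is what you need.

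The paper avoids right translation altogether. With $N_G(y_0,V)\subseteq S$ and $ay_0\in V$, it applies (2) to the non-empty open set $a^{-1}U$. This set need not contain $x$, which is exactly why (2) quantifies over all non-empty open sets. It tests $a^{-1}U$ against the dynamically central syndetic set $N_G(y_0,a^{-1}V)$ and obtains $t$ with $at\in N_G(x,U)\cap S$.
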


\begin{proof}
(1)$\implies$(2). 
Let $U\subseteq X$ be non-empty and open, let $Y$ be a minimal $G$-flow, and let $y\in Y$ and $V$ be a neighbourhood of $y$. 
Since $X$ and $Y$ are disjoint, Lemma~\ref{lem:product_return_disjointness} implies that the orbit of $(x,y)$ is dense in $X\times Y$. 
We show that this orbit meets $U\times V$ at a point other than $(x,y)$.

If $x\notin U$ or $y\notin V$, every orbit point in $U\times V$ is already different from $(x,y)$. 
Suppose now that $x\in U$ and $y\in V$. 
If $x$ is not isolated in $X$, choose a non-empty open set $U'\subseteq U$ with $x\notin U'$. 
Density gives $t\in G$ such that $(tx,ty)\in U'\times V$, and then $t\ne e$. 
If $x$ is isolated and $y$ is not isolated, choose a non-empty open set $V'\subseteq V$ with $y\notin V'$. Density applied to the open set $\{x\}\times V'$ gives $t\in G$ with $tx=x$ and $ty\in V'$, so again $t\ne e$.

It remains to consider the case where both $x$ and $y$ are isolated. 
Put $H=\{g\in G:gx=x\}$. 
By the free minimal Cantor flow furnished by Corollary~\ref{coro:rts_mixing_realization}, disjointness implies that $H$ has a dense orbit in an infinite minimal flow; 
hence $H$ is infinite. 
Applying density of the orbit of $(x,y)$ to the open set $\{x\}\times\{y\}$ shows that the $H$-orbit of $y$ is dense in $Y$. 
Since $y$ is isolated and $Y$ is minimal, $Y$ is finite, so this orbit is all of $Y$. 
The stabilizer of $y$ in $H$ therefore has finite index in the infinite group $H$, and contains some $t\ne e$.
In all cases we have found a non-identity $t\in N_G(x,U)\cap N_G(y,V)$. 
As $Y,y,V$ were arbitrary, $N_G(x,U)$ is a set of pointwise recurrence.

(2)$\implies$(3). 
Let $U$ be a neighbourhood of $x$, and let $D\subseteq G$ be dynamically syndetic. 
There are a minimal $G$-flow $Y$, a point $y_0\in Y$, and a non-empty open set $V\subseteq Y$ such that
\[
N_G(y_0,V)\subseteq D.
\]
By minimality choose $a\in G$ with $ay_0\in V$. Then $a^{-1}V$ is a neighbourhood of $y_0$, and $a^{-1}U$ is a non-empty open subset of $X$. 
By (2), $N_G(x,a^{-1}U)$ is a set of pointwise recurrence. 
Hence there is
\[
t\in N_G(x,a^{-1}U)\cap\bigl(N_G(y_0,a^{-1}V)\setminus\{e\}\bigr).
\]
It follows that $(at)x\in U$ and $(at)y_0\in V$, so
\[
at\in N_G(x,U)\cap N_G(y_0,V)\subseteq N_G(x,U)\cap D.
\]
Thus $N_G(x,U)$ intersects every dynamically syndetic set. 
Lemma~\ref{lem:dynamic_thick_dual} now implies that $N_G(x,U)$ is dynamically thick.

(3)$\implies$(1). 
Let $Y$ be any minimal $G$-flow, and let $J$ be a joining of $X$ and $Y$. 
Since $x$ is transitive, there is $y\in Y$ with $(x,y)\in J$.
We show that $(x,y)$ is transitive in $X\times Y$.
Given non-empty open sets $U\subseteq X$ and $V\subseteq Y$, choose $h\in G$ with $hx\in U$. 
Then $h^{-1}U$ is a neighbourhood of $x$, so by (3) the set $N_G(x,h^{-1}U)$ is dynamically thick. 
Since $h^{-1}V$ is non-empty and open, there is
\[
t\in N_G(x,h^{-1}U)\cap N_G(y,h^{-1}V).
\]
Consequently $(ht)x\in U$ and $(ht)y\in V$. 
Thus the orbit of $(x,y)$ is dense in $X\times Y$. 
As $J$ is closed and invariant and contains $(x,y)$, we obtain $J=X\times Y$. 
Therefore $X$ and $Y$ are disjoint.
\end{proof}

The preceding realization results in Sections 3 and 4 have the following consequence.
\begin{proposition}
	Let $X$ be a $G$-flow with a transitive point. 
	Let $\mathcal C$ be one of the following classes: all minimal $G$-flows; 
	minimal $G$-flows that are topologically weakly mixing of all finite orders; 
	or topologically strongly mixing minimal $G$-flows.
	The following assertions are equivalent:
	\begin{enumerate}[label=\textup{(\arabic*)}]
		\item $X$ is disjoint from every flow in $\mathcal C$;
		\item $X$ is disjoint from every minimal subflow in $\mathcal C$ of the Bernoulli shift $2^G$;
		\item $X$ is disjoint from every free minimal flow in $\mathcal C$ on a Cantor space.
	\end{enumerate}
\end{proposition}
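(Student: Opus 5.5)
The implications (1)$\Rightarrow$(2) and (1)$\Rightarrow$(3) are immediate. Every minimal subflow of $2^G$ lying in $\mathcal C$ is a flow in $\mathcal C$. Likewise every free minimal Cantor flow in $\mathcal C$ is a flow in $\mathcal C$. The work is therefore in (2)$\Rightarrow$(1) and (3)$\Rightarrow$(1). I would prove both by contraposition, routing through the return-time test of Proposition~\ref{prop:dcs_disjointness_test} and its proof.

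Fix a transitive point $x\in X$. Suppose $X$ is not disjoint from some $Y\in\mathcal C$. Take a proper joining $J$ and a point $y\in Y$ with $(x,y)\in J$. Since $J$ is closed, invariant and proper, the orbit of $(x,y)$ is not dense. So there are non-empty open sets $U\subseteq X$ and $V\subseteq Y$ with $N_G(x,U)\cap N_G(y,V)=\varnothing$. As in the proof of (3)$\Rightarrow$(1) of Proposition~\ref{prop:dcs_disjointness_test}, choose $h$ with $hy\in V$ and translate by $h$. This gives a neighbourhood $U'=h^{-1}U$ (non-empty open) and a neighbourhood $V'=h^{-1}V$ of $y$ with
\[
N_G(x,U')\cap N_G(y,V')=\varnothing .
\]
Put $F:=G\setminus N_G(x,U')$. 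Then $N_G(y,V')\subseteq F$, with $Y\in\mathcal C$ minimal and $y\in V'$.

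I would then apply the realization theorem matching the class $\mathcal C$: Theorem~\ref{thm:equivalent_characterizations}(3), Theorem~\ref{thm:wm_allorders}(3), or Theorem~\ref{thm:sm_sets}(3). This yields $e\in A\subseteq F$ such that $Z_0:=\overline{G\one_A}$ is a minimal subshift in $\mathcal C$. The identity cylinder $W_0$ then satisfies $N_G(\one_A,W_0)=A\subseteq F$. For (3), I would instead use Proposition~\ref{prop:more_equi_dyn_syn_sets}(4) or Proposition~\ref{prop:free_mixing_realization}. These give a free minimal Cantor flow $Z\in\mathcal C$, a point $z$ and a clopen $W\ni z$ with $N_G(z,W)\subseteq F$.

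In either case we get $Z\in\mathcal C$, a point $z$ and a neighbourhood $W$ with $N_G(x,U')\cap N_G(z,W)=\varnothing$. Hence the orbit closure of $(x,z)$ misses $U'\times W$. This orbit closure is a joining of $X$ and $Z$, since $x$ is transitive and $Z$ is minimal. So $X\not\perp Z$, contradicting (2), respectively (3).

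The only delicate point is a mismatch of hypotheses. The realization theorems require $e\in F$, which is true precisely because we arranged $y\in V'$. However, $e\in F$ also requires $e\notin N_G(x,U')$, that is, $x\notin U'$. If $x\in U'$, then $e\in N_G(x,U')\cap N_G(y,V')$, contradicting disjointness of these sets. So $e\in F$ holds automatically, and the translation step is the main thing to get right. No further obstacle is expected, since all mixing properties are supplied by the cited realization results.
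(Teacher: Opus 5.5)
Your proposal is correct and is essentially the paper's argument in contrapositive form. The paper translates by $a$ with $ay\in V$, realizes a test flow inside $D=N_G(y,a^{-1}V)$, and applies Lemma~\ref{lem:product_return_disjointness} to get density of $G(x,y)$. You instead build the proper joining $\overline{G(x,z)}$ directly from the empty intersection, which is the same lemma unpacked.
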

\begin{proof}
(1)$\implies$(2) and (3). Both test classes are subclasses of $\mathcal C$.

(2)$\implies$(1) and (3)$\implies$(1). 
Fix a transitive point $x\in X$. 
Let $Y\in\mathcal C$ and let $J$ be a joining of $X$ and $Y$. Choose $y\in Y$ with $(x,y)\in J$. 
We show that $G(x,y)$ meets every non-empty open rectangle $U\times V\subseteq X\times Y$.
By minimality of $Y$, choose $a\in G$ with $ay\in V$.
Thus $a^{-1}V$ is a neighbourhood of $y$, and $D:=N_G(y,a^{-1}V)$ is a pointed return-time set of a flow in $\mathcal C$.

For the three choices of $\mathcal C$, Theorems~\ref{thm:equivalent_characterizations}, \ref{thm:wm_allorders}, and~\ref{thm:sm_sets}, respectively, realize a neighbourhood return-time set contained in $D$ in a minimal binary subshift belonging to $\mathcal C$. 
Under (2), use this realization and denote its point and neighbourhood by $z$ and $W$.
Under (3), use instead the free Cantor realization from Proposition~\ref{prop:more_equi_dyn_syn_sets}(4) for the class of all minimal flows, or Proposition~\ref{prop:free_mixing_realization} for either mixing class. 
Again write $z$ and $W$ for the realizing point and neighbourhood, with $N_G(z,W)\subseteq D$.
In either case the chosen test flow is disjoint from $X$. 
Lemma~\ref{lem:product_return_disjointness} implies
\[
N_G(x,a^{-1}U)\cap N_G(z,W)\ne\varnothing.
\]
Choose $t$ in this intersection. 
Then $tx\in a^{-1}U$ and $ty\in a^{-1}V$, so $(at)(x,y)\in U\times V$. 
Hence $G(x,y)$ is dense in $X\times Y$. 
Since $(x,y)\in J$, it follows that $J=X\times Y$. 
This proves (1) for each class $\mathcal C$.
\end{proof}

\begin{proposition}\label{prop:disjoint_strong_mixing} 
	Let $X$ be a compact Hausdorff $G$-flow and let $x\in X$ be a transitive point.
	If $X$ is disjoint from every minimal topologically strongly mixing  $G$-flow, then for every non-empty open $U\subseteq X$,
	$N_G(x,U)$ is right piecewise syndetic.  
	Consequently, the collection of minimal points of $X$ is dense.   
\end{proposition}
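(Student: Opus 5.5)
Fix a non-empty open $U\subseteq X$ and put $A:=N_G(x,U)$. Right piecewise syndeticity is equivalent to meeting every right thickly syndetic set, by the characterization recalled in Section~\ref{sec:preliminaries}. So I would fix a right thickly syndetic $B\subseteq G$ and aim to show $A\cap B\neq\varnothing$.

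The first step is to reduce to a pointed set. Since $x$ is transitive, pick $h\in G$ with $hx\in U$. Then $h^{-1}U$ is a neighbourhood of $x$, and
\[
N_G(x,U)=hN_G(x,h^{-1}U).
\]
Left translates of right thickly syndetic sets are right thickly syndetic, because $\Int_K(h^{-1}B)=\Int_{hK}(B)$. Hence $B':=h^{-1}B\cup\{e\}$ is right thickly syndetic and contains $e$. Corollary~\ref{coro:rts_mixing_realization} then gives a topologically strongly mixing minimal flow $Y$, a point $y\in Y$ and a neighbourhood $V\ni y$ with $N_G(y,V)\subseteq B'$.

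The second step uses the hypothesis. Since $X\perp Y$ and $x,y$ are transitive, Lemma~\ref{lem:product_return_disjointness} gives $N_G(x,W)\cap N_G(y,V')\neq\varnothing$ for all non-empty open $W,V'$. The one obstacle is the extra point $e$ in $B'$: we need a common return time other than $e$, or at least one that lies in $h^{-1}B$. I would remove $e$ on the $Y$ side. $Y$ is infinite minimal, since the flow from Corollary~\ref{coro:rts_mixing_realization}(2) can be taken free, so $y$ is not isolated. Choose a non-empty open $V'\subseteq V$ with $y\notin V'$. Any $t\in N_G(x,h^{-1}U)\cap N_G(y,V')$ satisfies $t\neq e$ and $t\in N_G(y,V)\setminus\{e\}\subseteq h^{-1}B$. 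Therefore $ht\in A\cap B$. If the binary realization in part (1) happens to be finite, I would use the free Cantor realization in part (2) instead, since it is still strongly mixing and minimal and so is covered by the hypothesis.

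For the consequence, I would show that every non-empty open $U$ contains a minimal point. Because $A=N_G(x,U)$ is right piecewise syndetic, there is a finite $K$ with $K^{-1}A$ right thick. The plan is the standard Furstenberg/Gottschalk argument. Right thickness lets us choose $g_n$ with $F_ng_n\subseteq K^{-1}A$ along an exhaustion $F_n\uparrow G$. Passing to a limit point $z$ of $g_nx$, every $gz$ lies in $\bigcup_{k\in K}k^{-1}\cl U$. Hence some minimal point $w\in\cl{Gz}$ has its entire orbit in $K^{-1}\cl U$, so some $kw\in\cl U$ with $kw$ minimal. To land in $U$ itself rather than its closure, I would apply this to a smaller open $U_0$ with $\cl{U_0}\subseteq U$. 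This needs regularity, which is fine in compact Hausdorff spaces. The main care point is this limit step (nets rather than sequences if $X$ is non-metrizable), and it is routine. Density of minimal points follows.
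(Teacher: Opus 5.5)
Your proof is correct and follows the paper's route: test against right thickly syndetic sets, realize a return-time set of a strongly mixing minimal flow inside such a set via Corollary~\ref{coro:rts_mixing_realization}, intersect using Lemma~\ref{lem:product_return_disjointness}, then run the standard compactness argument for minimal points (your cluster-point version is equivalent to the paper's finite-intersection argument). The only difference is how the identity is handled. The paper right-translates the test set to $E=Ta^{-1}$ with $a\in T$, so that $e\in E$ without enlarging it, realizes inside $E$, and moves the base point to $a^{-1}z$, which gives $N_G(a^{-1}z,V)=N_G(z,V)a\subseteq T$ and needs no puncturing; you instead adjoin $e$ and remove $y$ from $V$ using that the free Cantor realization has no isolated points, and your preliminary translation by $h$ is harmless but unnecessary, since Lemma~\ref{lem:product_return_disjointness} applies to arbitrary non-empty open sets.
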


\begin{proof}
	Suppose that $R:=N_G(x,U)$ is not right piecewise syndetic.  
	A set is right piecewise syndetic precisely when it meets every right thickly syndetic set, see e.g.~\cite[Lemma~2.2]{XY24}. 
	Thus there is a right thickly syndetic set $T$ with $R\cap T=\varnothing$. 
	Choose $a\in T$ and put $E:=Ta^{-1}$, so $e\in E$.
	Corollary~\ref{coro:rts_mixing_realization} produces a minimal, topologically strongly mixing compact metrizable $G$-flow $(Y,G)$, a point $z\in Y$, and a non-empty clopen set $V\subseteq Y$ with $N_G(z,V)\subseteq E$. 
	Set $y:=a^{-1}z$. 
	Then $N_G(y,V)=N_G(z,V)a\subseteq T$.  
	The hypothesis and Lemma~\ref{lem:product_return_disjointness} imply
	\[
	\varnothing\neq N_G(x,U)\cap N_G(y,V)\subseteq R\cap T,
	\]
	a contradiction.  
    
	To prove density of minimal points in the compact Hausdorff setting, fix a non-empty open set $O\subseteq X$ and choose a non-empty open $V$ with $\overline V\subseteq O$. 
	Since $N_G(x,V)$ is right piecewise syndetic, there is a finite non-empty $K\subseteq G$ such that $K^{-1}N_G(x,V)$ is right thick. 
	Put $C=\bigcup_{k\in K}k^{-1}\overline V$.
	For every finite $E\subseteq G$, thickness provides $h\in G$ with $Eh\subseteq K^{-1}N_G(x,V)$, so $Ehx\subseteq C$. 
	The closed sets $\bigcap_{g\in E}g^{-1}C$ therefore have the finite-intersection property. Compactness provides a point $z$ with $Gz\subseteq C$. 
	Choose a minimal subflow $M\subseteq\overline{Gz}\subseteq C$. 
	For any $m\in M$, some $k\in K$ satisfies $km\in\overline V\subseteq O$. 
	Thus $O$ contains a minimal point. 
\end{proof}

\begin{remark}
	If $X$ is a non-trivial equicontinuous minimal $G$-flow, then $X$ is disjoint from every minimal $G$-flow $Y$ that is topologically weakly mixing of all finite orders. 
	This result is implicit in Chapter 11 of~\cite{A88}.
	Here we present a direct argument. 
	Let $J\subseteq X\times Y$ be a minimal joining, and suppose that $J$ misses a non-empty open rectangle $U\times V$. 
	Choose $x_0\in U$ and a symmetric entourage $E$ of $X$ with $E[x_0]\subseteq U$. 
	Equicontinuity and compactness provide a finite open cover $W_1,\ldots,W_n$ of $X$ such that $(gx,gx')\in E$ whenever $x,x'\in W_j$ and $g\in G$.

The projection from $J$ onto $Y$ sends every non-empty relatively open set to a set with non-empty interior.
Indeed, choose a non-empty relatively open set with closure inside the given set; finitely many translates of its closure cover $J$ by minimality. 
Their projections form a finite closed cover of $Y$, so at least one has non-empty interior. 
Thus for each $j$ there is a non-empty open $V_j\subseteq Y$ contained in the projection of $J\cap(W_j\times Y)$.
Transitivity of $Y^n$ provides $g\in G$ and $y_j\in V_j$ with $gy_j\in V$ for all $j$. Choose $x_j\in W_j$ with $(x_j,y_j)\in J$. 
Since $J$ misses $U\times V$, all $gx_j$ lie outside $U$. 
But $g^{-1}x_0\in W_j$ for some $j$, so $(gx_j,x_0)\in E$ and $gx_j\in U$, a contradiction. Hence every minimal joining is the full product, which proves disjointness. 
On the other hand, $X$ is not disjoint from every minimal $G$-flow, since $X$ itself is a non-trivial minimal $G$-flow. 
\end{remark}

\begin{remark}
	A homeomorphism $T$ of a compact metric space $(X,d)$ is uniformly rigid if there is a sequence $n_j\to\infty$ with $\sup_{x\in X}d(T^{n_j}x,x)\to0$.
	By Glasner and Maon~\cite[Proposition~6.5]{GM89}, there exists a homeomorphism $T$ on the two-dimensional torus $\mathbb{T}^2$ such that the cascade $(\mathbb{T}^2,T)$ is minimal, topologically weakly mixing, and uniformly rigid.
	Huang and Ye showed, in particular, that every topologically strongly mixing cascade is disjoint from every minimal uniformly rigid cascade~\cite[Theorem 4.4]{HY04}.
	Then $(\mathbb{T}^2,T)$ is disjoint from any topologically strongly mixing minimal $\mathbb{Z}$-action.
	On the other hand, $(\mathbb{T}^2,T)$ is not disjoint from every minimal $\mathbb{Z}$-action that is topologically weakly mixing of all finite orders, since it is itself such a system.
\end{remark}

\begin{remark}
	For $\mathbb Z$-actions, structural characterizations of disjointness from all minimal systems were obtained in~\cite{HSY20,HSXY25}. In particular, Huang, Shao, Xu and Ye~\cite[Theorem~A]{HSXY25} proved that a compact metrizable $\mathbb Z$-flow $X$ is disjoint from every minimal flow if and only if it contains a sequence of minimal subflows $M_i$ such that $\overline{\bigcup_iM_i}=X$ and $X\perp M_i$ for every $i$. This suggests asking whether the same equivalence holds for compact metrizable $G$-flows.
\end{remark}

\begin{remark}
	A quasi-factor of a compact flow $X$ is a minimal subflow of the space of non-empty closed subsets of $X$, equipped with the Vietoris topology and the induced $G$-action. 
	Suppose that every minimal $G$-flow admits an invariant probability measure. 
	Then~\cite[Corollary 11.28]{A88} states that a minimal flow $X$ is disjoint from every topologically weakly mixing minimal $G$-flow if and only if every non-trivial quasi-factor of $X$ has a non-trivial distal factor.
	A further question is to find structural characterizations, analogous to the dense-union criterion above, of $G$-flows disjoint from every topologically strongly mixing $G$-flow or from every minimal $G$-flow that is topologically weakly mixing of all finite orders. 
	In the strong-mixing case, the test class here also includes nonminimal flows, whereas Proposition~\ref{prop:disjoint_strong_mixing} concerns minimal test flows.
\end{remark}

\medskip 
\noindent \textbf{AI-use disclosure}:
The central ideas of this paper originated with the authors, who also established the results for finitely generated group actions. 
AI-assisted tools were used to assist in extending those results to arbitrary countably infinite group actions, to conduct literature searches, and to improve the exposition. 
The authors verified all mathematical arguments and take full responsibility for the final content.

\medskip 
\noindent \textbf{Acknowledgments}: 
The authors were partially supported by the National Key R\&D Program of China (2024YFA1013601), National Natural Science Foundation of China (12222110) and a grant from the Guangdong Provincial Department of Education (2025KCXTD013).

\end{document}